\documentclass[12pt]{amsart}
\usepackage{amsthm, amssymb}
\usepackage{amsfonts, amscd, color}
\usepackage{epsfig,multicol}

\setlength{\topmargin}{0cm} \setlength{\oddsidemargin}{0.5cm}
\setlength{\evensidemargin}{0.5cm} \setlength{\textheight}{ 22cm}
\setlength{\textwidth}{15cm}

\theoremstyle{plain} \numberwithin{equation}{section}
\newtheorem{theo}{Theorem}[section]
\newtheorem{coro}[theo]{Corollary}
\newtheorem{prop}[theo]{Proposition}
\newtheorem{lemm}[theo]{Lemma}

\theoremstyle{definition}
\newtheorem*{defi}{Definition}
\newtheorem*{exam}{Example}
\newtheorem*{rema}{Remark}

\def\Z{\mathbb Z}
\def\C{\mathbb C}
\def\R{\mathbb R}

\def\MS{\mathcal S}
\def\MR{\mathcal R}
\def\MD{\mathcal D}
\def\MX{\mathcal X}
\def\MI{\mathcal I}

\def\Si{\Sigma}
\def\De{\Delta}

\def\b{b}

\def\v{v}
\def\la{\lambda}
\def\ch{\chi}

\def\uC{\underline{\C}}
\def\RZR{\C\times\Z}
\def\RZRn{\C^n\times\Z^n}

\def\1{\mathbf 1}
\def\0{\mathbf 0}
\def\SiX{\Si(X)}
\def\RX{\R X}

\DeclareMathOperator{\GL}{GL}

\DeclareMathOperator{\Lie}{Lie}

\DeclareMathOperator{\Ker}{Ker}
\DeclareMathOperator{\Hom}{Hom}
\DeclareMathOperator{\Int}{Int}

\begin{document}
\title{Topological toric manifolds}
\author[H. Ishida]{Hiroaki Ishida} \author[Y. Fukukawa]{Yukiko Fukukawa}  \author[M. Masuda]{Mikiya Masuda}
\address{Osaka City University Advanced Mathematical Institute, Sumiyoshi-ku, Osaka 558-8585, Japan.}
\email{ishida@sci.osaka-cu.ac.jp}

\address{Department of Mathematics, Osaka City University, Sumiyoshi-ku, Osaka 558-8585, Japan.}
\email{yukiko.fukukawa@gmail.com} \email{masuda@sci.osaka-cu.ac.jp}

\date{\today}
\thanks{The first and second authors were supported by JSPS Research Fellowships for Young Scientists}
\thanks{The third author was partially supported by Grant-in-Aid for Scientific Research 22540094}
\subjclass[2010]{Primary 53D20, 57S15; Secondary 14M25}
\keywords{toric manifold, fan, multi-fan, quasitoric manifold, torus manifold}

\begin{abstract}
We introduce the notion of a topological toric manifold and a topological fan and show that there is a bijection between omnioriented topological toric manifolds and complete non-singular topological fans.  A topological toric manifold is a topological analogue of a toric manifold and the family of topological toric manifolds is much larger than that of toric manifolds.  A topological fan is a combinatorial object generalizing the notion of a simplicial fan in toric geometry. 

Prior to this paper, two topological analogues of a toric manifold have been introduced.  One is a quasitoric manifold and the other is a torus manifold.  One major difference between the previous notions and topological toric manifolds is that the former support a smooth action of an $S^1$-torus while the latter support a smooth action of a $\C^*$-torus.  We also discuss their relation in details.     
\end{abstract}

\maketitle 

\section{Introduction}

Toric geometry was established in the 1970's following the pioneering work of Demazure (see \cite{co-li-sc}, \cite{fult93}, \cite{oda88} for the history).  It provides examples of explicit algebraic varieties and finds many interesting connections with combinatorics.  A toric variety of complex dimension $n$ is a normal algebraic variety over the complex numbers $\C$ with an effective algebraic action of $(\C^*)^n$ having an open dense orbit, where $\C^*=\C\backslash\{0\}$.  On the other hand, a fan of real dimension $n$ is a collection of cones in $\R^n$ with the origin as vertex satisfying certain conditions.  A fundamental theorem in toric geometry says that there is a bijective correspondence between toric varieties of complex dimension $n$ and fans of real dimension $n$.  

Among toric varieties, compact non-singular toric varieties, which we call \emph{toric manifolds} in this paper, are well studied.  In particular, their cohomology rings, Chern classes and genera such as signature are explicitly described in terms of the associated fans.  Among toric manifolds, projective ones provide examples of symplectic manifolds.  A projective toric manifold $X$ admits a moment map whose image is a simple convex polytope and can be identified with the orbit space of $X$ by the compact torus $(S^1)^n$ of $(\C^*)^n$.   

Previous to this paper, two topological analogues of a toric manifold have been introduced and a theory similar to toric geometry is developed for them using topological technique.  One topological analogue is what is now called a \emph{quasitoric manifold}\footnote{Davis-Januszkiewicz \cite{da-ja91} uses the terminology \emph{toric manifold} but it was already used in algebraic geometry as the meaning of smooth toric variety, so Buchstaber-Panov \cite{bu-pa02} started using the word \emph{quasitoric manifold}.} introduced by Davis-Januszkiewicz \cite{da-ja91} around 1990 and the other is a \emph{torus manifold} introduced by Masuda \cite{masu99}\footnote{In \cite{masu99}, the notion of \emph{unitary toric manifold} is introduced.  It is a torus manifold with invariant unitary (or weakly almost complex) structure.} and Hattori-Masuda \cite{ha-ma03} around 2000.  

A quasitoric manifold is a closed smooth manifold $M$ of even dimension, say $2n$, with an effective smooth action of $(S^1)^n$, such that $M$ is locally equivariantly diffeomorphic to a representation space of $(S^1)^n$ and the orbit space $M/(S^1)^n$ is a simple convex polytope.  A projective toric manifold with the restricted action of the compact torus is a quasitoric manifold but there are many quasitoric manifolds which do not arise this way.  For example $\C P^2\# \C P^2$ with a smooth action of $(S^1)^2$ is quasitoric but not toric because it does not allow a complex (even almost complex) structure, as is well-known.  Davis-Januszkiewicz \cite{da-ja91} show that quasitoric manifolds $M$ are classified in terms of pairs $(Q,v)$ where $Q$ is a simple convex polytope identified with the orbit space $M/(S^1)^n$ and $v$ is a function on the facets of $Q$ with values in $\Z^n$ satisfying a certain unimodularity condition.  

A torus manifold is a closed smooth manifold $M$ of even dimension, say $2n$, with an effective smooth action of $(S^1)^n$ having a fixed point.  An orientation datum called an omniorientation is often incorporated in the definition of a torus manifold.  The action of $(S^1)^n$ on a toric or quasitoric manifold has a fixed point, so they are torus manifolds.  A typical and simple example of a torus manifold which is neither toric nor quasitoric is $2n$-sphere $S^{2n}$ with a natural smooth action of $(S^1)^n$ for $n\ge 2$.  The orbit space $S^{2n}/(S^1)^n$ is contractible but there are many torus manifolds whose orbit spaces by the torus action are not contractible unlike in the case of toric or quasitoric manifolds.  Although the family of torus manifolds is much larger than that of toric or quasitoric manifolds, one can associate a combinatorial object $\De(M)$ called a multi-fan to an omnioriented torus manifold $M$.  Roughly speaking, a multi-fan is also a collection of cones but cones may overlap unlike ordinary fans.  When $M$ arises from a toric manifold, the multi-fan $\De(M)$ agrees with the ordinary fan associated with $M$.  In general, the multi-fan $\De(M)$ does not determine $M$, but it contains a lot of geometrical information on $M$, e.g. genera of $M$ such as signature, Hirzebruch $T_y$ (or $\chi_y$) genus   and elliptic genus can be described in terms of $\De(M)$  (\cite{ha-ma03}, \cite{ha-ma05}, \cite{masu99}, a similar description can be found in \cite{pano99} and \cite{pano01} for the Hirzebruch genus). 
 
In this paper, we introduce a third topological analogue of a toric manifold, which we believe is the correct topological analogue.  Remember that a toric manifold of complex dimension $n$ is a compact smooth algebraic variety with an effective algebraic action of $(\C^*)^n$ having an open dense orbit.  It is known that a toric manifold is covered by finitely many invariant open subsets each equivariantly and algebraically isomorphic to a direct sum of complex one-dimensional \emph{algebraic} representation spaces of $(\C^*)^n$.  Based on this observation we define our topological analogue of a toric manifold as follows. 

\begin{defi}
We say that a closed smooth manifold $X$ of dimension $2n$ with an effective \emph{smooth} action of $(\C^*)^n$ having an open dense orbit is a (compact) \emph{topological toric manifold} if it is covered by finitely many invariant open subsets each equivariantly diffeomorphic to a direct sum of complex one-dimensional \emph{smooth} representation spaces of $(\C^*)^n$.  
\end{defi}

We make two remarks.  The first one is that the existence of the open dense orbit of $(\C^*)^n$ does not imply the existence of the local charts diffeomorphic to smooth representation spaces of $(\C^*)^n$ (see the example in Section~\ref{sect:3}) although it does in the toric case.  The second one is that there are many more smooth representations of $(\C^*)^n$ than algebraic ones.  This stems from the fact that since $\C^*=\R_{>0}\times S^1$ as smooth groups, any \emph{smooth} endomorphism of $\C^*$ is of the form 
\begin{equation} \label{intro1}
\text{$g\mapsto |g|^{b+\sqrt{-1}c}\big(\frac{g}{|g|}\big)^v$\quad with $(b+\sqrt{-1}c,v)\in \RZR$}
\end{equation}
and this endomorphism is algebraic if and only if $b=v$ and $c=0$.  Therefore the group $\Hom(\C^*,\C^*)$ of \emph{smooth} endomorphisms of $\C^*$ is isomorphic to $\RZR$ while the group $\Hom_{alg}(\C^*,\C^*)$ of \emph{algebraic} endomorphisms of $\C^*$ is isomorphic to $\Z$.  This implies that topological toric manifolds are much more abundant than toric manifolds.  

Nevertheless, topological toric manifolds have similar topological properties to toric manifolds.  For instance, the orbit space of a topological toric manifold $X$ by the restricted compact torus action is a  manifold with corners whose faces (even the orbit space itself) are all contractible and any intersection of faces is connected unless it is empty, so the orbit space looks like a simple polytope.  This implies that the cohomology ring $H^*(X;\Z)$ of $X$ is generated by degree two elements as a ring like the toric or quasitoric case (Proposition~\ref{prop:cohom}).  However, the orbit space is not necessarily a simple polytope and one can see the following. 

\begin{theo}[see Theorem~\ref{theo:quasi} for more details]
The family of topological toric manifolds with restricted compact torus actions is strictly larger than the family of quasitoric manifolds up to equivariant homeomorphism.  
\end{theo}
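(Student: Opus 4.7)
The plan has two parts: first show that every quasitoric manifold is equivariantly homeomorphic, with respect to the restricted compact torus action, to some topological toric manifold; then exhibit a topological toric manifold lying outside the quasitoric class.

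\textbf{Part 1 (inclusion).} By the Davis--Januszkiewicz classification cited in the introduction, a quasitoric manifold $M$ is encoded by a pair $(Q,v)$, where $Q=M/(S^1)^n$ is a simple convex polytope and $v$ is a characteristic function assigning to each facet of $Q$ a primitive vector in $\Z^n$ so that any $n$ vectors attached to facets meeting at a vertex form a $\Z$-basis of $\Z^n$. From this data I would build a complete non-singular topological fan whose rays are $(0,v(F))\in\RZRn$, one for each facet $F$ of $Q$, assembled into cones according to the face lattice of $Q$; the unimodularity condition on $v$ guarantees non-singularity, and completeness follows from $Q$ being a polytope. The bijection between omnioriented topological toric manifolds and complete non-singular topological fans (the main theorem announced in the abstract) then produces a topological toric manifold $X$ whose orbit space under the restricted compact torus action is $Q$ with characteristic function $v$. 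A second application of Davis--Januszkiewicz's classification identifies $X$ equivariantly with $M$ as $(S^1)^n$-manifolds.

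\textbf{Part 2 (strict containment).} The decisive observation is already visible in \eqref{intro1}: a smooth endomorphism $g\mapsto |g|^{b+\sqrt{-1}c}(g/|g|)^v$ of $\C^*$ restricts on $S^1$ to $g\mapsto g^v$, so the compact torus action is sensitive only to the $\Z$-parts of the rays of the topological fan, while linear independence inside $\RZRn$ may use the $\C$-parts. I would exploit this mismatch to write down a complete non-singular topological fan $\Sigma$ whose rays project to $\Z^n$ in a configuration that could not be the characteristic function of any quasitoric manifold---for instance, two distinct rays with identical $\Z^n$-parts but different $\C^n$-parts, linearly independent in $\RZRn$ yet collinear after projection. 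Passing to the topological toric manifold $X$ furnished by the main correspondence, the restricted $(S^1)^n$-action has weights at fixed points determined by these $\Z^n$-projections, and the orbit space $X/(S^1)^n$, while a manifold with corners with contractible faces and connected face intersections (by the properties recalled before the theorem), is not combinatorially a simple convex polytope. Hence $X$ admits no quasitoric structure equivariantly homeomorphic to it.

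The main obstacle is Part 2: one must verify that a chosen collection of rays in $\RZRn$ actually defines a complete non-singular topological fan, and then show that the associated compact torus orbit space genuinely fails to be a simple polytope rather than merely failing an obvious realization. This requires carefully tracing how the integer projections of the rays determine the combinatorial type of the orbit space through the construction inverse to the main correspondence, and exhibiting an explicit small-dimensional example (likely in $n=2$) where the non-polytopal nature can be confirmed by inspection.
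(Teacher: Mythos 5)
Both parts of your proposal contain genuine gaps.

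\textbf{Part 1.} Setting the rays to $(0,v(F))\in\RZRn$ (i.e.\ $b_i=c_i=0$) cannot yield a topological fan: the definition requires the $b_i$'s to be linearly independent whenever $\{i\}\subset I\in\Si$, and completeness requires $\bigcup_{I\in\Si}\angle b_I=\R^n$. With $b_i=0$ both fail immediately. The $b$-data in a topological fan must by itself form an honest complete fan over $\R$, and you cannot get that from the characteristic function alone: the $v_i$'s generally form a multi-fan, not a fan (their cones may overlap). The paper instead embeds the dual simplicial polytope $P$ of $Q$ in $\R^n$ with the origin in its interior and takes $b_i$ to be (any scalar multiple of) the position vector of the $i$-th vertex of $P$. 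That geometric placement is exactly what produces a genuine complete simplicial fan over $\R$, after which Theorem~\ref{theo:homeo} (the $(S^1)^n$-homeomorphism type is independent of $b+\sqrt{-1}c$) plus Proposition~\ref{prop:daja} give the equivariant homeomorphism with $M$.

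\textbf{Part 2.} Your mechanism — two rays with the same $\Z^n$-part but different $\C^n$-parts — does not produce something non-quasitoric. Quasitoric manifolds routinely have two disjoint facets mapping to the same $v\in\Z^n$, and in any case the $(S^1)^n$-equivariant type of $X(\De)$ depends only on $(\Si,v)$, not on $b+\sqrt{-1}c$ (Theorem~\ref{theo:homeo}); so varying the $\C$-parts while fixing $v$ changes nothing at the level of the torus action. The actual obstruction to being quasitoric lies in the \emph{combinatorics of $\Si$}: the orbit space $X/(S^1)^n$ of $X=X(\De)$ is a manifold with corners whose face poset is the inverse poset of $\Si$ (Lemma~\ref{lemm:faces}), so $X$ is quasitoric precisely when $\Si$ is polytopal. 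Since every simplicial $(n-1)$-sphere with $n\le 3$ is polytopal (Steinitz), the two families actually \emph{coincide} for $n\le 3$; your hope for an explicit $n=2$ counterexample cannot succeed. The paper's witness is the Barnette sphere (Section~\ref{sect:9}), a non-polytopal simplicial $3$-sphere realized as $\Si(X)$ for a topological toric manifold of dimension $8$; suspension then gives non-polytopal examples in every $n\ge 4$. Without the Barnette-sphere realization (or some other non-polytopal $\Si$), Part 2 is not established.
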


As a combinatorial counterpart to a topological toric manifold, we introduce the notion of a \emph{simplicial topological fan} (we simply say \emph{topological fan} in this paper) generalizing the notion of a simplicial fan in toric geometry.  A simplicial fan of dimension $n$ is a collection of simplicial cones in $\R^n$ satisfying certain conditions.  It can be regarded as a pair $(\Si,v)$ of an abstract simplicial complex $\Si$ and a map $v\colon \Si^{(1)}\to \Z^n$, where $\Si$ is the underlying simplicial complex of the fan, $\Si^{(1)}$ is the set of vertices in $\Si$ which correspond to one-dimensional cones in the fan, and $v$ assigns primitive integral vectors lying on the one-dimensional cones.  We note that the target group $\Z^n$ of the map $v$ should actually be regarded as the group $\Hom_{alg}(\C^*,(\C^*)^n)$ of \emph{algebraic} homomorphisms from $\C^*$ to $(\C^*)^n$.   

We define a topological fan of dimension $n$ to be a pair $\De=(\Si,\beta)$ of an abstract simplicial complex $\Si$ and a map $\beta\colon \Si^{(1)}\to \Hom(\C^*,(\C^*)^n)$ satisfying certain conditions, where $\Hom(\C^*,(\C^*)^n)$ denotes the group of \emph{smooth} homomorphisms from $\C^*$ to $(\C^*)^n$.  We may think of a topological fan as a collection of cones in $\Hom(\C^*,(\C^*)^n)$ by forming cones $\mathcal C$ using the $\Si$ and $\beta$.  As observed in \eqref{intro1}, $\Hom(\C^*,(\C^*)^n)$ is isomorphic to $\RZRn$, so we may regard $\beta$ as a map to $\RZRn$ and write $\beta=(b+\sqrt{-1}c,v)$ accordingly.  Then an ordinary simplicial fan is a topological fan with $b=v$ and $c=0$.  Cones obtained from the pair $(\Si,b)$, which are the projected images of the cones $\mathcal C$ on the real part of the first factor of $\RZRn$, do not overlap and define an ordinary simplicial fan over $\R$ while cones formed from the pair $(\Si,v)$, which are the projected images of $\mathcal C$ on the second factor of $\RZRn$, may overlap and define a multi-fan.  A topological fan $\De$ is called \emph{complete} if the ordinary fan $(\Si,b)$ is complete, and \emph{non-singular} if the multi-fan $(\Si,v)$ is non-singular, i.e. if $\{v(\{i\})\}_{i\in I}$ is a part of a $\Z$-basis of $\Z^n$ for any $I\in \Si$.  

One can associate a complete non-singular topological fan $\De(X)$ of dimension $n$ to an omnioriented topological toric manifold $X$ of dimension $2n$.  Conversely, one can construct an omnioriented topological toric manifold $X(\De)$ from a complete non-singular topological fan $\De$ using the quotient construction of toric manifolds developed by Cox \cite{cox95}. Our main theorem in this paper is the following. 

\begin{theo}[Theorem~\ref{theo:class}]
The correspondences $X\to \De(X)$ and $\De\to X(\De)$ give bijections 
\[
\begin{split}
&\{\text{Omnioriented topological toric manifolds of dimension $2n$}\}\\
&\rightleftarrows\ \{\text{Complete non-singular topological fans of dimension $n$}\}
\end{split}
\]
and they are inverses to each other. 
\end{theo}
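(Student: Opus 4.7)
The plan is to make the two correspondences explicit and then verify both round trips. I construct $\De(X)$ from an omnioriented topological toric manifold $X$ by recording the equivariant data of its codimension-two invariant submanifolds, and I construct $X(\De)$ from a complete non-singular topological fan $\De$ by a quotient construction modelled on Cox \cite{cox95}. Then I check $\De(X(\De))=\De$ and $X(\De(X))\cong X$ equivariantly.

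\textbf{Construction of $\De(X)$.} Take $\SiX^{(1)}$ to index the characteristic submanifolds of $X$, that is, the codimension-two closed invariant submanifolds each pointwise fixed by a one-dimensional $\C^*$-subgroup of $(\C^*)^n$. For each characteristic submanifold $X_i$ the action of $(\C^*)^n$ on the normal line to $X_i$ yields a smooth homomorphism $\beta(i)\colon\C^*\to (\C^*)^n$, with the sign ambiguity pinned down by the given omniorientation. I declare a non-empty $I\subset \SiX^{(1)}$ to be a simplex of $\SiX$ precisely when $\bigcap_{i\in I}X_i\neq \emptyset$. The hypothesised local charts force $\{\beta(i)\}_{i\in I}$ to be part of a $\Z$-basis of $\Hom(\C^*,(\C^*)^n)$ for every simplex $I$, giving non-singularity; completeness of the underlying real-part fan $(\SiX,b)$ will follow from the existence of a $(\C^*)^n$-fixed point together with the simple-polytope-like orbit space structure advertised in the introduction.

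\textbf{Construction of $X(\De)$.} For $\De=(\Si,\beta)$ with $m=|\Si^{(1)}|$, set $U(\Si):=\C^m\setminus Z(\Si)$ where $Z(\Si)$ is the standard coordinate-subspace arrangement cut out by the non-faces of $\Si$. Assemble $\beta$ into a smooth homomorphism $\tilde\beta\colon(\C^*)^m\to (\C^*)^n$; non-singularity of $\De$ makes $\tilde\beta$ split surjective and forces its kernel $K$ to act freely and properly on $U(\Si)$, so $X(\De):=U(\Si)/K$ is a smooth $2n$-manifold with an effective residual smooth action of $(\C^*)^m/K\cong (\C^*)^n$ and an open dense orbit. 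Completeness of $(\Si,b)$ yields compactness of $X(\De)$. The required invariant charts come from the maximal simplices $I\in \Si$: the open subset $\{z\in U(\Si)\colon z_j\neq 0\text{ for }j\notin I\}/K$ is equivariantly diffeomorphic to a direct sum of one-dimensional smooth $(\C^*)^n$-representations whose weights are read off from $\beta|_I$, and an omniorientation is built in by the coordinate axes of $\C^m$.

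\textbf{Verification of the inverses and main obstacle.} For $\De(X(\De))=\De$: the characteristic submanifolds of $X(\De)$ are the images in $U(\Si)/K$ of the coordinate hyperplanes, the isotropy homomorphism of the $i$-th coordinate axis pushed through $\tilde\beta$ reproduces $\beta(i)$, and the simplicial structure is recorded in the non-empty intersections of these images, which reconstitute $\Si$. The harder direction is $X(\De(X))\cong X$. The plan is to cover both sides by the canonical charts indexed by the maximal simplices of $\De(X)$ and build an equivariant diffeomorphism chart by chart, using the classification of smooth $(\C^*)^n$-representations on $\C^n$ by their weights in $\Hom(\C^*,(\C^*)^n)^n$ to produce the local pieces. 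The main obstacle will be canonicalising the gluing: unlike the algebraic case, the transition data on overlaps are merely smooth, and compatibility of the omniorientations must be tracked across all intersections. I intend to control this by first normalising an equivariant diffeomorphism between the open dense orbits (a single choice in $(\C^*)^n$), then extending across the codimension-one strata through the explicit one-dimensional local models, and finally using the contractibility of the faces of the orbit space noted in the introduction to carry the extension inductively over strata of higher codimension.
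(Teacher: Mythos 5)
Your constructions of $\De(X)$ and $X(\De)$, and your verification that $\De(X(\De))=\De$ (the paper's Lemma~\ref{lemm:cano}), match the paper. For the other direction, though, the paper does not attempt an inductive, stratum-by-stratum extension of an equivariant diffeomorphism. Since $\De(X(\De))=\De$ holds, it suffices to prove that $X\mapsto\De(X)$ is \emph{injective}, and this is done by a uniqueness argument rather than a construction. Fix a point $\iota$ in the open dense orbit of $X$ and, for each fixed point $p$, normalise the omniorientation-preserving chart $\varphi_p\colon O_p\to V_p$ so that $\varphi_p(\iota)=1_p:=(1,\dots,1)$; this uses up the only $(\C^*)^n$-ambiguity in $\varphi_p$, and $\De(X)$ already determines the $(\C^*)^n$-representation $V_p$ by Lemma~\ref{lemm:lindep}. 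The transition map $\varphi_q\circ\varphi_p^{-1}$ is then a $(\C^*)^n$-equivariant map between open subsets of $V_p$ and $V_q$ that contain the free open dense orbits, and it sends $1_p$ to $1_q$; equivariance forces such a map on the open dense orbit (it must send $g\cdot 1_p$ to $g\cdot 1_q$), and density plus continuity force it everywhere. So the transition data are determined by $\De(X)$ with no further choices, and $X$ is recovered. This is both simpler and safer than your plan: in the inductive extension you sketch you would still have to justify smoothness of the extension across each successive stratum, and the contractibility of faces of the orbit space, which you invoke, does not enter the argument at all. What actually does the work is that the $(\C^*)^n$-action on the model representation space is free on a dense open set, so an equivariant transition map fixing a basepoint in that orbit is automatically unique.
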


We also classify omnioriented topological toric manifolds up to equivariant diffeomorphism or equivariant homeomorphism in terms of the associated topological fans (Corollary~\ref{coro:class2}).   

This paper is organized as follows.  In Section~\ref{sect:2} we study smooth representations of $(\C^*)^n$.  In Section~\ref{sect:3} we deduce some properties of topological toric manifolds.  This tells us how to define topological fans.  In Section~\ref{sect:4} we construct a topological space $X(\De)$ with an action of $(\C^*)^n$ from a complete non-singular topological fan $\De=(\Si,\beta)$. We prove that $X(\De)$ with the action of $(\C^*)^n$ is actually a topological toric manifold in Sections~\ref{sect:5} and~\ref{sect:6}.  In Section~\ref{sect:7} we study $X(\De)$ as an $(S^1)^n$-manifold.  In particular, we prove that the $(S^1)^n$-equivariant homeomorphism type of $X(\De)$ does not depend on the first component of the $\beta$ (Theorem~\ref{theo:homeo}).  Our main theorem is proved in Section~\ref{sect:8}.  A topological toric manifold with the restricted action of the compact torus is a torus manifold.  So one can apply results on torus manifolds to topological toric manifolds.   Using a result from \cite{ma-pa06}, we describe the cohomology ring of a topological toric manifold $X$ in terms of the associated topological fan $\De(X)$ (Proposition~\ref{prop:cohom}).  We also describe the total Pontrjagin class of $X$. In Section~\ref{sect:9} we show that the Barnette sphere, which is a non-polytopal simplicial 3-sphere, can be the underlying simplicial complex of a topological toric manifold but cannot be that of a toric manifold.  We discuss the relation of topological toric manifolds with quasitoric manifolds in Section~\ref{sect:10} and with torus manifolds in Section~\ref{sect:11}.  In Section~\ref{sect:12}, we introduce \emph{real} topological toric manifolds and discuss their relation with small covers introduced by Davis-Januszkiewicz \cite{da-ja91} as a real version of quasitoric manifolds.  Finally, we collect a few facts on the underlying simplicial complexes of (topological) toric manifolds in the Appendix.

\section{Smooth representations of $(\C^*)^n$} \label{sect:2}

In this section we study smooth representations of $(\C^*)^n$ and set up some notations needed later.  We begin with the case where $n=1$. As is easily checked, any smooth group endomorphism of $\C^*$ is of the form 
\[
\text{$g\to |g|^{b+\sqrt{-1}c}\big(\frac{g}{|g|}\big)^v=:g^\mu$\quad with $\mu=(b+\sqrt{-1}c,v)\in \RZR$.}
\]
Since $\GL(1,\C)=\C^*$, a smooth group endomorphism of $\C^*$ can be regarded as a complex one-dimensional smooth representation of $\C^*$.  This representation is \emph{algebraic} if and only if $b=v$ and $c=0$.  We set $\bar\mu:=(b-\sqrt{-1}c,-v)$.  Then $\overline{g^{\mu}}=g^{\bar \mu}$. 

\begin{lemm} \label{lemm:gmu}
\begin{enumerate}
\item If $b>0$, then the endomorphism $g\to g^{\mu}$ extends continuously at $g=0$.  If $v=\pm 1$ in addition to $b>0$, then the extended map is a homeomorphism.   
\item $\bar g^\mu=\overline{g^{\mu}}$ for any $g\in \C^*$ if and only if $c=0$.  
\item $g^{\mu}=g^p\bar g^q$ for some integers $p,q$ if and only if $b$ is an integer congruent to $v$ modulo $2$ and $c=0$.  In fact, $g^{\mu}=g^{(b+v)/2}\bar g^{(b-v)/2}$ in this case.  
\item A complex one-dimensional smooth representation $g\to g^{\mu'}$ is isomorphic to the representation $g\to g^{\mu}$ as real representations if and only if $\mu'=\mu$ or $\bar\mu$.  
\end{enumerate}
\end{lemm}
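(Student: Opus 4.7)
My plan is to treat the four items in order, writing $g = re^{\sqrt{-1}\theta}$ so that
\[
g^\mu = r^b e^{\sqrt{-1}(c\log r + v\theta)}
\]
whenever an explicit polar expression is needed, and freely using the identity $\overline{g^\mu} = g^{\bar\mu}$ recorded in the paragraph preceding the lemma.

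For (1), the decisive observation is that $|g^\mu| = r^b$, so when $b > 0$ the value $|g^\mu|$ tends to $0$ as $g \to 0$; setting $0^\mu := 0$ supplies the required continuous extension. For the homeomorphism assertion when $v = \pm 1$, I would first establish the composition law on $\Hom(\C^*,\C^*) \cong \RZR$ by a short polar calculation: with inner factor $(b+\sqrt{-1}c,v)$ and outer factor $(b'+\sqrt{-1}c',v')$, the composite reads $(bb' + \sqrt{-1}(b'c + c'v),\ v'v)$. Solving for an inverse then shows that $\mu = (b + \sqrt{-1}c, v)$ with $v = \pm 1$ admits $\mu^{-1} = (1/b - \sqrt{-1}cv/b,\ v)$, whose first coordinate again has positive real part $1/b$. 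Applying the first half of (1) to $\mu^{-1}$ produces a continuous extension of $g \mapsto g^{\mu^{-1}}$, and this extension is manifestly inverse to the extension of $g \mapsto g^\mu$.

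Parts (2) and (3) amount to matching polar data. For (2), direct expansion yields $\bar g^\mu = |g|^{b+\sqrt{-1}c}(\bar g/|g|)^v$ and $\overline{g^\mu} = |g|^{b-\sqrt{-1}c}(\bar g/|g|)^v$; equality for all $g$ reduces to $|g|^{2\sqrt{-1}c} \equiv 1$, which, since $|g|$ ranges over all positive reals, forces $c = 0$. For (3), writing $g^p\bar g^q = r^{p+q}e^{\sqrt{-1}(p-q)\theta}$ and comparing with $r^b e^{\sqrt{-1}c\log r}e^{\sqrt{-1}v\theta}$, the separate dependence on $r$ (at fixed $\theta$) and on $\theta$ (at fixed $r$) forces $c = 0$, $b = p+q$ and $v = p-q$, leading to the stated integrality condition $b \in \Z$ with $b \equiv v \pmod 2$ and to $p = (b+v)/2$, $q = (b-v)/2$.

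The step I expect to be the main obstacle is (4), which requires care about real- versus complex-linearity. My plan is to parametrize an arbitrary $\R$-linear isomorphism $\phi \colon \C \to \C$ in the standard form $\phi(z) = \alpha z + \beta \bar z$ with $\alpha,\beta \in \C$, and impose the intertwining relation $\phi(g^\mu z) = g^{\mu'}\phi(z)$. Using $\overline{g^\mu} = g^{\bar\mu}$, this relation becomes
\[
\alpha\, g^\mu\, z \ +\ \beta\, g^{\bar\mu}\,\bar z \ =\ \alpha\, g^{\mu'}\, z\ +\ \beta\, g^{\mu'}\,\bar z \qquad \text{for all } g \in \C^*,\ z \in \C.
\]
Testing at $z = 1$ and $z = \sqrt{-1}$ shows that $z$ and $\bar z$ are linearly independent over $\C$ as $\R$-linear functions of $z$, so I may equate coefficients to obtain $\alpha(g^\mu - g^{\mu'}) = 0$ and $\beta(g^{\bar\mu} - g^{\mu'}) = 0$ for every $g$. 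Since $\phi$ is invertible, $(\alpha,\beta) \ne (0,0)$, so one of these forces either $\mu' = \mu$ or $\mu' = \bar\mu$. The converse is immediate: $\phi = \mathrm{id}$ realizes $\mu' = \mu$ and complex conjugation realizes $\mu' = \bar\mu$.
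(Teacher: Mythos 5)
Your arguments for (1)--(3) are correct and simply spell out what the paper dismisses as ``clear,'' so there is nothing to compare there, except one small slip in (1): with the convention $(g^{\mu_1})^{\mu_2}=g^{\mu_2\mu_1}$ that the paper uses, the composite of inner factor $\mu_1=(b+\sqrt{-1}c,v)$ and outer factor $\mu_2=(b'+\sqrt{-1}c',v')$ is $(bb'+\sqrt{-1}(bc'+cv'),\ vv')$, not $(bb'+\sqrt{-1}(b'c+c'v),\ v'v)$ as you wrote; you have $\mu_1\mu_2$ rather than $\mu_2\mu_1$. This does not affect your conclusion, because when $v=\pm1$ solving either version of the equation for the inverse yields the same $\mu^{-1}=(1/b-\sqrt{-1}cv/b,\ v)$, and from there your homeomorphism argument --- both $g\mapsto g^\mu$ and $g\mapsto g^{\mu^{-1}}$ extend continuously by $0\mapsto 0$ and are mutually inverse --- is sound.

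For (4) your route is genuinely different from the paper's. The paper writes out the $2\times 2$ real matrix of $g\mapsto g^\mu$ in the basis $(1,\sqrt{-1})$, computes its trace to get the character $2\exp(bx)\cos(cx+vy)$, and observes that isomorphic real representations have equal characters, which forces $b=b'$ and $(c,v)=\pm(c',v')$. You instead parametrize an arbitrary $\R$-linear map $\C\to\C$ as $\phi(z)=\alpha z+\beta\bar z$, impose the intertwining relation, use the linear independence of the $\C$-valued $\R$-linear functions $z$ and $\bar z$ to separate coefficients, and conclude that a nonzero $\alpha$ forces $\mu'=\mu$ while a nonzero $\beta$ forces $\mu'=\bar\mu$. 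Both arguments are elementary; yours sidesteps the character computation and makes the role of the conjugation map transparent (it is precisely the term $\beta\bar z$), whereas the paper's character argument is a bit shorter and has the advantage of only needing the ``isomorphic implies equal characters'' implication, so it does not require parametrizing the space of all $\R$-linear maps. Your proof is correct; just fix the composition-law typo and, if you like, state explicitly that injectivity of $\mu\mapsto(g\mapsto g^\mu)$ is what lets you pass from $g^\mu=g^{\mu'}$ for all $g$ to $\mu=\mu'$.
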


\begin{proof}
Statements (1), (2) and (3) are clear.  As for (4), if $\mu'=\bar\mu$, then the complex conjugation map gives an isomorphism between those two representations, proving the \lq\lq if" part.  Suppose that the two representations in (4) are isomorphic as real representations. 
Let us denote $\mu = (b + \sqrt{-1}c,v)$ and $\mu '=(b'+\sqrt{-1}c',v')$. By the definition of $g^{\mu}$, the matrix presentation of $g^{\mu}$ with respect to the basis $(1,\sqrt{-1})$ of $\C$ as an $\R$-vector space is
\begin{equation*}\exp (bx)
	\begin{pmatrix}
		\cos (cx+vy)  &-\sin (cx+vy)\\
		\sin (cx+vy) &\ \ \ \cos (cx+vy)
	\end{pmatrix}
\end{equation*}
where $g=\exp(x+\sqrt{-1}y), x, y \in \R$. Hence the characters of the representations $g \to g^{\mu}$ and $g \to g^{\mu '}$ are respectively given as 
\begin{equation*}
	2\exp (bx)\cos (cx+vy) \quad \text{ and }\quad 2\exp (b'x)\cos (c'x+v'y).
\end{equation*}
These functions are same if and only if $b=b'$ and $(c,v)=\pm (c',v')$. 
This implies $\mu'=\mu$ or $\bar\mu$, proving the \lq\lq only if" part. 
\end{proof}

For $\mu_i=(b_i+\sqrt{-1}c_i,v_i)$ $(i=1,2)$ we have 
\[
\begin{split}
(g^{\mu_1})^{\mu_2}&=\big(|g|^{b_1+\sqrt{-1}c_1}(g/|g|)^{v_1}\big)^{\mu_2}\\
&=(|g|^{b_1})^{b_2+\sqrt{-1}c_2}\big(|g|^{\sqrt{-1}c_1}(g/|g|)^{v_1}\big)^{v_2}\\
&=|g|^{b_1b_2+\sqrt{-1}(b_1c_2+c_1v_2)}(g/|g|)^{v_1v_2},
\end{split}
\]
so if we define
\begin{equation} \label{prod}
\mu_2\mu_1:=(b_1b_2+\sqrt{-1}(b_1c_2+c_1v_2),v_1v_2),
\end{equation}
then we have 
\begin{equation} \label{expo}
(g^{\mu_1})^{\mu_2}=g^{\mu_2\mu_1}.
\end{equation}

It is better to regard $\mu=(b+\sqrt{-1}c,v)$ as a $2\times 2$ matrix 
\begin{equation} \label{matr}
\begin{bmatrix}
b &0\\
c&v
\end{bmatrix}.
\end{equation}
Then \eqref{prod} is nothing but the identity   
$$
\begin{bmatrix}
b_2 &0\\
c_2&v_2
\end{bmatrix} \begin{bmatrix}
b_1 &0\\
c_1&v_1
\end{bmatrix}=\begin{bmatrix}
b_2b_1 & 0\\
c_2b_1+v_2c_1 & v_2v_1
\end{bmatrix}.
$$

Let $\MR$ be a ring consisting of $2\times 2$ matrices of the form \eqref{matr} with $b,c\in \R$ and $v\in\Z$.  The identity matrix $\1$ (resp. zero matrix $\0$) of size 2 is the identity (resp. zero) element of the ring $\MR$. The set $\Hom(\C^*,\C^*)$ of smooth endomorphisms of $\C^*$ forms a ring where the addition of $\rho_1,\rho_2\in \Hom(\C^*,\C^*)$ is the endomorphism $g\to \rho_1(g)\rho_2(g)$ and the multiplication of them is their composition.  The observation above shows that $\Hom(\C^*,\C^*)$ is isomorphic to $\MR$ as rings. 

For $\alpha=(\alpha^1,\dots,\alpha^n)\in \MR^n$ and $\beta=(\beta^1,\dots,\beta^n)\in \MR^n$, we define smooth homomorphisms $\ch^\alpha\in\Hom((\C^*)^n,\C^*)$ and $\la_\beta\in \Hom(\C^*,(\C^*)^n)$ by 
\begin{equation} \label{chla}
\ch^\alpha(g_1,\dots,g_n):=\prod_{k=1}^n g_k^{\alpha^k},  \quad\la_\beta(g):=(g^{\beta^1},\dots,g^{\beta^n}).
\end{equation}
We also define
\begin{equation} \label{inner}
\langle \alpha,\beta\rangle:=\sum_{k=1}^n \alpha^k\beta^k\in \MR.  
\end{equation}

\begin{lemm} \label{lemm:chlalach}
\begin{enumerate}
\item $(g_1,\dots,g_n)\in (\C^*)^n$ is the identity element if and only if $\ch^{\alpha}(g_1,\dots,g_n)=1$ for any $\alpha\in \MR^n$. 
\item $\ch^\alpha(\la_{\beta}(g))=g^{\langle \alpha,\beta\rangle}$.
\item $\displaystyle{\la_{\beta}(\ch^{\alpha}(g_1,\dots,g_n))=(\prod_{k=1}^ng_k^{\beta^1\alpha^k},\dots, \prod_{k=1}^ng_k^{\beta^n\alpha^k})}$.
\end{enumerate}
\end{lemm}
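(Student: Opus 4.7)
The plan is to treat each statement in turn, observing that all three are essentially formal consequences of the exponent identity $(g^{\mu_1})^{\mu_2}=g^{\mu_2\mu_1}$ from \eqref{expo}, together with the easily verified multiplicative identity $g^{\mu_1}g^{\mu_2}=g^{\mu_1+\mu_2}$ (which is immediate from the explicit formula $g^\mu=|g|^{b+\sqrt{-1}c}(g/|g|)^v$ and the additivity of exponents in each of the $|g|$ and $g/|g|$ factors). So the core of the proof is a book-keeping exercise in the ring $\MR$.

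For (1), the ``only if'' direction is immediate. For the converse, I would specialise the hypothesis to test characters: for each index $k$, let $\alpha\in\MR^n$ be the vector with $\1$ in the $k$th slot and $\0$ elsewhere, where $\1$ is the identity of $\MR$ (so $g^{\1}=g$ for every $g\in\C^*$). Then \eqref{chla} gives $\ch^\alpha(g_1,\dots,g_n)=g_k^{\1}=g_k$, and the assumption forces $g_k=1$. Doing this for every $k$ yields $(g_1,\dots,g_n)=(1,\dots,1)$.

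For (2), unfolding the definitions in \eqref{chla} gives
\[
\ch^\alpha(\la_\beta(g))=\ch^\alpha(g^{\beta^1},\dots,g^{\beta^n})=\prod_{k=1}^n(g^{\beta^k})^{\alpha^k}.
\]
By \eqref{expo} each factor equals $g^{\alpha^k\beta^k}$, and then by the multiplicativity $g^{\mu_1}g^{\mu_2}=g^{\mu_1+\mu_2}$ noted above, the product collapses to $g^{\sum_k\alpha^k\beta^k}=g^{\langle\alpha,\beta\rangle}$ by \eqref{inner}.

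For (3), I would again expand using \eqref{chla}: the $j$th coordinate of $\la_\beta(\ch^\alpha(g_1,\dots,g_n))$ is
\[
\Bigl(\prod_{k=1}^n g_k^{\alpha^k}\Bigr)^{\beta^j}=\prod_{k=1}^n\bigl(g_k^{\alpha^k}\bigr)^{\beta^j}=\prod_{k=1}^n g_k^{\beta^j\alpha^k},
\]
where the first equality uses that $h\mapsto h^{\beta^j}$ is a smooth endomorphism of $\C^*$ (hence multiplicative) and the second is \eqref{expo}. Assembling over $j$ yields the claimed formula. I do not anticipate a genuine obstacle; the only subtle point to keep track of is that the ring $\MR$ is \emph{non-commutative}, so the order in the products $\mu_2\mu_1$, $\beta^j\alpha^k$ must be respected exactly as dictated by \eqref{expo}.
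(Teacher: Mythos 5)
Your proposal is correct and takes essentially the same approach as the paper: for (1) you specialize $\alpha$ to have $\1$ in one coordinate and $\0$ elsewhere (exactly the paper's argument), and for (2) and (3) you unwind the definitions and apply \eqref{expo}, \eqref{chla}, and \eqref{inner}, which the paper asserts without writing out. Your added care about the non-commutativity of $\MR$ and the correct order $\beta^j\alpha^k$ is well placed, since that is the one point where a careless calculation could go wrong.
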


\begin{proof}
(1) The \lq\lq only if" part is trivial.  If $\alpha$ has $\1$ at the $i$-th entry and $\0$ at the other entries, then $\ch^{\alpha}(g_1,\dots,g_n)=g_i$.  This implies the \lq\lq if" part. 

Statements (2) and (3) easily follow from \eqref{expo}, \eqref{chla} and \eqref{inner}. 
\end{proof}

For $\{\alpha_i\}_{i=1}^n$ and $\{\beta_i\}_{i=1}^n$ with $\alpha_i,\beta_i\in\MR^n$, we define group endomorphisms $\bigoplus_{i=1}^n\ch^{\alpha_i}$ and $\prod_{i=1}^n\la_{\beta_i}$ of $(\C^*)^n$ by 
\begin{equation} \label{defprod}
\begin{split}
&(\bigoplus_{i=1}^n\ch^{\alpha_i})(g_1,\dots,g_n):=(\ch^{\alpha_1}(g_1,\dots,g_n),\dots, \ch^{\alpha_n}(g_1,\dots,g_n))\\
&(\prod_{i=1}^n\la_{\beta_i})(g_1,\dots,g_n):=\prod_{i=1}^n\la_{\beta_i}(g_i).
\end{split}
\end{equation}

With this understood, we have 

\begin{lemm} \label{lemm:comp}
If $\{\alpha_i\}_{i=1}^n$ is dual to $\{\beta_i\}_{i=1}^n$, i.e. $\langle \alpha_i,\beta_j\rangle=\delta_{ij}\1$ where $\delta_{ij}$ denotes the Kronecker delta, then the composition 
$$\big(\prod_{i=1}^n \la_{\beta_i}\big)\big(\bigoplus_{i=1}^n\ch^{\alpha_i}\big)\colon (\C^*)^n\to (\C^*)^n$$ 
is the identity,  in particular, both $\bigoplus_{i=1}^n \ch^{\alpha_i}$ and $\prod_{i=1}^n \la_{\beta_i}$ are automorphisms of $(\C^*)^n$.  
\end{lemm}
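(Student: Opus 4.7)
The plan is to prove the composition identity in two steps, exploiting the fact that the opposite-order composition $(\bigoplus_i\ch^{\alpha_i})\circ(\prod_i\la_{\beta_i})$ is the one that drops directly out of the duality hypothesis. First I verify that opposite composition; then I pass to matrices over $\MR$ and use the embedding $\MR\hookrightarrow M_2(\R)$ from \eqref{matr} to flip the order.

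Set $\phi:=\bigoplus_i\ch^{\alpha_i}$ and $\psi:=\prod_i\la_{\beta_i}$. For $g=(g_1,\dots,g_n)\in(\C^*)^n$, applying $\ch^{\alpha_j}$ to $\psi(g)=\prod_i\la_{\beta_i}(g_i)$ and using that $\ch^{\alpha_j}$ is a homomorphism together with Lemma~\ref{lemm:chlalach}(2), one finds
\[
\ch^{\alpha_j}(\psi(g)) \;=\; \prod_i\ch^{\alpha_j}(\la_{\beta_i}(g_i)) \;=\; \prod_i g_i^{\langle\alpha_j,\beta_i\rangle} \;=\; \prod_i g_i^{\delta_{ji}\1} \;=\; g_j.
\]
Since the $j$-th coordinate of $\phi(h)$ is $\ch^{\alpha_j}(h)$, this says $\phi(\psi(g))=g$; hence $\phi\circ\psi=\mathrm{id}$.

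Next, every smooth endomorphism $f$ of $(\C^*)^n$ is determined by its $n$ coordinate characters and hence corresponds to an $n\times n$ matrix $M_f$ with entries in $\MR$; a direct check (essentially Lemma~\ref{lemm:chlalach}(3) together with \eqref{expo}) shows that composition of endomorphisms corresponds to matrix multiplication, $M_{g\circ f}=M_gM_f$. The identity $\phi\circ\psi=\mathrm{id}$ therefore reads $M_\phi M_\psi=I$ in $M_n(\MR)$. The ring embedding $\MR\hookrightarrow M_2(\R)$ from \eqref{matr} induces a ring embedding $M_n(\MR)\hookrightarrow M_{2n}(\R)$ sending $I$ to $I_{2n}$. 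Over $\R$ a one-sided inverse of a square matrix is two-sided, so $M_\phi M_\psi=I_{2n}$ inside $M_{2n}(\R)$ forces $M_\psi M_\phi=I_{2n}$; descending back to $M_n(\MR)$ yields $\psi\circ\phi=\mathrm{id}$. The \lq\lq in particular" clause follows at once, since $\phi$ and $\psi$ are mutually inverse.

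The main obstacle is the noncommutativity of $\MR$: the duality $\langle\alpha_i,\beta_j\rangle=\delta_{ij}\1$ yields only the one-sided matrix identity $M_\phi M_\psi=I$, and a direct expansion of the stated composition does not collapse by itself. The decisive tool is the subring embedding $\MR\subset M_2(\R)$ from \eqref{matr}, which transfers Dedekind-finiteness of real square matrices up to $M_n(\MR)$ and supplies the missing direction.
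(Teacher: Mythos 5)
Your proof is correct and rests on the same decisive tool as the paper's, namely viewing matrices over $\MR$ as real matrices of size $2n$ so that the one-sided inverse furnished by duality can be promoted to a two-sided one. The paper expands $(\prod_i\la_{\beta_i})\circ(\bigoplus_i\ch^{\alpha_i})$ coordinatewise and collapses the resulting exponent sums via the flipped identity $\sum_i\beta_i^j\alpha_i^k=\delta_{jk}\1$, while you first record the opposite composition $(\bigoplus_i\ch^{\alpha_i})\circ(\prod_i\la_{\beta_i})=\mathrm{id}$ directly from duality and then abstract composition of smooth endomorphisms as matrix multiplication over $\MR$ before flipping; these are two presentations of the identical argument.
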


\begin{proof}
It follows from \eqref{defprod} and Lemma~\ref{lemm:chlalach} (2) that if we write $\alpha_i=(\alpha_i^1,\dots,\alpha_i^n)$ and $\beta_i=(\beta_i^1,\dots,\beta_i^n)$, then   
\begin{equation}  \label{keisan}
\begin{split}
&\big(\prod_{i=1}^n \la_{\beta_i}\big)\big(\bigoplus_{i=1}^n\ch^{\alpha_i}\big)(g_1,\dots,g_n)\\
=&(\prod_{i=1}^n\la_{\beta_i})\big(\chi^{\alpha_1}(g_1,\dots,g_n),\dots,\chi^{\alpha_n}(g_1,\dots,g_n)\big)\\
=& \prod_{i=1}^n\la_{\beta_i}\big(\chi^{\alpha_i}(g_1,\dots,g_n)\big)
=\prod_{i=1}^n\big(\prod_{k=1}^ng_k^{\beta_i^1\alpha_i^k},\dots,\prod_{k=1}^ng_k^{\beta_i^n\alpha_i^k}\big)\\
=&\big(\prod_{k=1}^ng_k^{\sum_{i=1}^n\beta_i^1\alpha_i^k},\dots,\prod_{k=1}^ng_k^{\sum_{i=1}^n\beta_i^n\alpha_i^k}\big).
\end{split}
\end{equation}
On the other hand, the assumption that $\langle \alpha_i,\beta_j\rangle=\delta_{ij}\1$ is equivalent to the following identity
\begin{equation} \label{bigmatrix}
\begin{bmatrix} \alpha^1_1 & \alpha^2_1 & \dots &\alpha^n_1 \\
\alpha^1_2 & \alpha^2_2 & \dots &\alpha^n_2 \\
\vdots &\vdots & \ddots & \vdots\\
\alpha^1_n & \alpha^2_n & \dots &\alpha^n_n 
\end{bmatrix} 
\begin{bmatrix} \beta^1_1& \beta^1_2 & \dots &\beta^1_n\\
\beta^2_1& \beta^2_2 & \dots &\beta^2_n\\
\vdots &\vdots & \ddots & \vdots\\
\beta^n_1& \beta^n_2 & \dots &\beta^n_n
\end{bmatrix} =\begin{bmatrix} \1 & \0 &\dots &\0\\
\0&\1&\dots &\0\\
\vdots &\vdots & \ddots & \vdots\\
\0&\0&\dots &\1\end{bmatrix}
\end{equation}
Each entry in \eqref{bigmatrix} is a matrix of size $2$, so that the product at the left hand side of \eqref{bigmatrix} can be viewed as a product of matrices of size $2n$ with entries in $\R$ or $\Z$.  Since the product is the identity matrix,  we may interchange the two matrices at the left hand side of \eqref{bigmatrix}.  Then the resulting identity shows that $\sum_{i=1}^n\beta_i^j\alpha_i^k=\delta_{jk}\1$ and this means that the last term in \eqref{keisan} reduces to $(g_1,\dots,g_n)$, proving the lemma.    
\end{proof}

Through the identification of $\MR$ with $\RZR$, we identify $\MR^n$ with $\RZRn$ and write $\beta_i\in \MR^n$ as $(b_i+\sqrt{-1}c_i,v_i)\in \RZRn$.  With this understood, we have

\begin{lemm} \label{lemm:dual}
If $\{b_i\}_{i=1}^n$ and $\{v_i\}_{i=1}^n$ are bases of $\R^n$ and $\Z^n$ respectively, then $\{\beta_i\}_{i=1}^n$ has a dual set $\{\alpha_i\}_{i=1}^n$, i.e. $\langle \alpha_i,\beta_j\rangle=\delta_{ij}\1$ for any $i,j$.   
\end{lemm}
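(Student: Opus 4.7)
The plan is to translate the existence of a dual set into an invertibility statement for a $2n\times 2n$ real matrix, then check that the inverse has the right block structure. Using the identification of $\MR$ with the ring of $2\times 2$ matrices \eqref{matr}, assemble $\{\beta_i\}_{i=1}^n$ into the $n\times n$ block matrix $B$ whose $(k,j)$-block is $\beta_j^k$; entry-wise, this is a $2n\times 2n$ real matrix. As explained in \eqref{bigmatrix}, finding $\{\alpha_i\}_{i=1}^n\subset\MR^n$ dual to $\{\beta_i\}_{i=1}^n$ is equivalent to producing an $n\times n$ block matrix $A$ with all $2\times 2$ blocks in $\MR$ such that $AB=I_{2n}$ as a real matrix; the $(i,k)$-block of such an $A$ is then $\alpha_i^k$.

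First I would show that $B$ is invertible over $\R$ and simultaneously pin down the shape of $B^{-1}$. The crucial observation is that every block
$\beta_j^k=\begin{bmatrix} b_j^k & 0\\ c_j^k & v_j^k\end{bmatrix}$
has a zero in its upper-right corner. Permute the $2n$ rows so that the first row of every block precedes all second rows, and similarly for columns. In the new basis, $B$ is carried to the block lower triangular form
$$
\begin{pmatrix} B_1 & 0\\ C_1 & V_1\end{pmatrix},
$$
where $B_1,C_1,V_1$ are $n\times n$ real matrices whose $j$-th columns are $b_j,c_j,v_j$ respectively. By hypothesis $B_1\in\GL(n,\R)$ and $V_1\in\GL(n,\Z)$, so $B$ is invertible and its inverse in the permuted basis is
$$
\begin{pmatrix} B_1^{-1} & 0\\ -V_1^{-1}C_1B_1^{-1} & V_1^{-1}\end{pmatrix}.
$$

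Finally I would undo the permutation and read off the $2\times 2$ blocks of $B^{-1}$. Since the permuted inverse is again block lower triangular, each $2\times 2$ block of $B^{-1}$ has a zero in its upper-right corner, hence is of the form \eqref{matr}; moreover, the lower-right entries of these blocks come from $V_1^{-1}$, which is integral because $V_1\in\GL(n,\Z)$. Thus every block of $B^{-1}$ lies in $\MR$, and defining $\alpha_i^k$ to be its $(i,k)$-block gives $\{\alpha_i\}_{i=1}^n\subset\MR^n$ with $\langle\alpha_i,\beta_j\rangle=\delta_{ij}\1$.

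I do not anticipate a substantive obstacle: the argument is really the observation that the upper-right zero present in each $\beta_j^k$ propagates to the upper-right zero in each block of the inverse, so the dual set automatically lives in $\MR^n$ rather than in a larger space of real $2\times 2$ matrices. The only step requiring any bookkeeping is the permutation that makes this block triangularity visible, and integrality of $V_1^{-1}$ takes care of the condition $v\in\Z$ in the definition \eqref{matr} of $\MR$.
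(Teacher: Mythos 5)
Your proposal is correct and follows essentially the same route as the paper: form the $2n\times 2n$ real matrix from the $\beta_j^k$, conjugate by the permutation that sorts first and second coordinates of each $2\times 2$ block to expose the block-lower-triangular form $\begin{pmatrix} B_1 & 0\\ C_1 & V_1\end{pmatrix}$, invert using $B_1\in\GL(n,\R)$ and $V_1\in\GL(n,\Z)$, and conjugate back. Your write-up is slightly more explicit than the paper's in noting that the upper-right zero and the integrality of the $v$-entries propagate through conjugation, which is the point the paper leaves implicit.
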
  

\begin{proof}
Write $\beta_i=(\beta_i^1,\dots,\beta_i^n)$ as before and $\beta_i^k=(b_i^k+\sqrt{-1}c_i^k,v_i^k)$. Setting  
\[
B=\begin{bmatrix} b^1_1& b^1_2 & \dots &b^1_n\\
b^2_1& b^2_2 & \dots &b^2_n\\
\vdots &\vdots & \ddots & \vdots\\
b^n_1& b^n_2 & \dots &b^n_n
\end{bmatrix}, \quad 
C=\begin{bmatrix} c^1_1& c^1_2 & \dots &c^1_n\\
c^2_1& c^2_2 & \dots &c^2_n\\
\vdots &\vdots & \ddots & \vdots\\
c^n_1& c^n_2 & \dots &c^n_n
\end{bmatrix}, \quad
V=\begin{bmatrix} v^1_1& v^1_2 & \dots &v^1_n\\
v^2_1& v^2_2 & \dots &v^2_n\\
\vdots &\vdots & \ddots & \vdots\\
v^n_1& v^n_2 & \dots &v^n_n
\end{bmatrix},  
\]
we see that the second matrix at the left hand side of \eqref{bigmatrix} is conjugate to this block matrix 
$
\begin{bmatrix}
B&0\\
C&V
\end{bmatrix}
$
by a permutation matrix $P$.  The assumption in the lemma implies that $B\in \GL(n,\R)$ and $V\in \GL(n,\Z)$.  Therefore the above block matrix has an inverse  
$
\begin{bmatrix}
B^{-1}&0\\
-V^{-1}CB^{-1}&V^{-1}
\end{bmatrix}.
$
Conjugating this matrix by $P^{-1}$, we obtain the first matrix at the left hand side in \eqref{bigmatrix}, proving the existence of the dual set $\{\alpha_i\}_{i=1}^n$. 
\end{proof}

Consider a complex $n$-dimensional smooth representation of $(\C^*)^n$ of the form $\bigoplus_{i=1}^n\ch^{\alpha_i}$ with $\alpha_i\in\MR^n$. This representation is algebraic if and only if every factor $\ch^{\alpha_i}$ is algebraic, so complex $n$-dimensional \emph{algebraic} representations of $(\C^*)^n$ are parameterized by $(\Z^n)^n$ while complex $n$-dimensional \emph{smooth} ones are parameterized by $(\MR^n)^n=(\RZRn)^n$. 

Throughout this paper we will denote the representation space of a representation $\rho$ by $V(\rho)$. 
\begin{lemm} \label{lemm:faithful}
\begin{enumerate}
\item A smooth complex $n$-dimensional representation $\bigoplus_{i=1}^n\ch^{\alpha_i}$ of $(\C^*)^n$ is faithful if and only if the matrix in \eqref{bigmatrix} formed from $\alpha_i$'s has an inverse with entries in $\MR$.    
\item If $\bigoplus_{i=1}^n\ch^{\alpha_i}$ is faithful, then the representation space $\bigoplus_{i=1}^nV(\ch^{\alpha_i})$ has an open dense orbit and has only finitely many orbits.  In fact, the open dense orbit is unique and isomorphic to $(\C^*)^n$ and the other orbits have real codimension at least two. 
\end{enumerate}
\end{lemm}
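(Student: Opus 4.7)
For part (1), the ``if'' direction is essentially an application of Lemma~\ref{lemm:comp}. Let $A\in M_n(\MR)$ be the matrix whose rows are the $\alpha_i$'s (so $A_{jk}=\alpha_j^k$), which is the first matrix appearing in \eqref{bigmatrix}. Given an inverse $A'\in M_n(\MR)$, reading the $j$-th column of $A'$ defines $\beta_j\in\MR^n$ satisfying $\langle \alpha_i,\beta_j\rangle=\delta_{ij}\1$; Lemma~\ref{lemm:comp} then gives that $\bigoplus_{i=1}^n\ch^{\alpha_i}$ is an automorphism of $(\C^*)^n$ and in particular faithful.

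For the ``only if'' direction I would compute the kernel explicitly. Writing $g_k=\exp(x_k+\sqrt{-1}y_k)$ and $\alpha_i^k=(b_i^k+\sqrt{-1}c_i^k,v_i^k)$, the matrix computation carried out in the proof of Lemma~\ref{lemm:gmu} turns $\ch^{\alpha_i}(g_1,\dots,g_n)=1$ for all $i$ into the system $Bx=0$ and $Cx+Vy\in 2\pi\Z^n$, where $B$, $C$, $V$ are as in Lemma~\ref{lemm:dual}. Triviality of the kernel is then equivalent to (i) $Bx=0\Rightarrow x=0$, i.e.\ $B\in\GL(n,\R)$, and (ii) $Vy\in 2\pi\Z^n\Rightarrow y\in 2\pi\Z^n$, which, since $V\in M_n(\Z)$ is already integral, holds iff $V\in\GL(n,\Z)$. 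Under these two conditions the block matrix $\begin{bmatrix}B&0\\ C&V\end{bmatrix}$ has the inverse $\begin{bmatrix}B^{-1}&0\\ -V^{-1}CB^{-1}&V^{-1}\end{bmatrix}$ displayed in the proof of Lemma~\ref{lemm:dual}, whose entries are of the correct $\MR$-type; conjugating back by the permutation matrix $P$ of that proof then yields an inverse of $A$ in $M_n(\MR)$.

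For part (2), from (1) and Lemma~\ref{lemm:comp} the representation $\bigoplus_{i=1}^n\ch^{\alpha_i}$ is an automorphism of $(\C^*)^n$. The induced action on $\C^n$ preserves the coordinate stratification $\C^n=\bigsqcup_{J\subseteq\{1,\dots,n\}}\C^*_J$, where $\C^*_J=\{z\in\C^n:z_i\neq 0\iff i\in J\}$, since each $\ch^{\alpha_i}(g)$ is nonzero. Composing the automorphism with the coordinate projection $(\C^*)^n\to(\C^*)^J$ yields a surjection, so any two points of $\C^*_J$ lie in a common orbit. Hence the orbits are exactly the $2^n$ strata $\C^*_J$; the one with $J=\{1,\dots,n\}$ is $(\C^*)^n$, open and dense and equivariantly identified with the regular $(\C^*)^n$-space via the automorphism, while every other orbit has real codimension $2(n-|J|)\ge 2$.

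The main obstacle is the ``only if'' direction of (1): faithfulness is a set-theoretic condition on the kernel, whereas invertibility in $M_n(\MR)$ is a mixed algebraic condition demanding that both the continuous block $B$ and the lattice block $V$ be invertible in the appropriate sense. The key device is the permutation conjugation of Lemma~\ref{lemm:dual}, which decouples these blocks so that the continuous and lattice parts of the triviality-of-kernel condition translate directly into $B\in\GL(n,\R)$ and $V\in\GL(n,\Z)$.
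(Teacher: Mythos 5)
Your proof is correct, and for part (2) it is essentially the paper's argument in a slightly more abstract guise: the paper shows two points $p,q$ lie in the same orbit iff they vanish on the same coordinate set by writing down the explicit element $h=\prod_i\la_{\beta_i}(g_i)$ with $g_i=z_i(p)/z_i(q)$ that carries $q$ to $p$; you package exactly that computation as surjectivity of $(\C^*)^n\to(\C^*)^J$ obtained by composing the automorphism $\bigoplus_i\ch^{\alpha_i}$ with a coordinate projection. For part (1) the paper simply declares the statement clear, so your detailed argument goes beyond what the authors write.

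One point in your ``only if'' direction of (1) is stated more tightly than it is actually proved. You claim that triviality of the kernel (the system $Bx=0$, $Cx+Vy\in 2\pi\Z^n$) is ``equivalent to (i) $Bx=0\Rightarrow x=0$ and (ii) $Vy\in 2\pi\Z^n\Rightarrow y\in 2\pi\Z^n$'', as if the two conditions decouple at the level of the kernel. But the middle block $C$ couples $x$ and $y$, so the necessity of (i) is not a sub-case of the kernel condition in the way (ii) is: setting $y=0$ in the kernel condition forces $Bx=0$ \emph{and} $Cx\in 2\pi\Z^n$, not $Bx=0$ alone. The argument still goes through, but one must first extract (ii) (restrict to $x=0$; this forces $\det V\ne 0$ and then $|\det V|=1$), and only then observe that if $x_0\ne 0$ lay in $\ker B$, the real vector $y_0=-V^{-1}Cx_0$ would give the nontrivial kernel element $(x_0,y_0)$. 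Likewise, the closing remark that the permutation conjugation ``decouples these blocks so that the continuous and lattice parts of the triviality-of-kernel condition translate directly'' overstates what the conjugation does — it block-triangularizes the matrix, which is what you need for building the inverse in $M_n(\MR)$, but it does not decouple the kernel computation itself. With that small repair the proof of (1) is complete.

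A minor remark on the ``if'' direction: you read $\beta_j$ off the columns of a two-sided inverse $A'$ of $A$. Since $M_n(\MR)$ embeds in $M_{2n}(\R)$ as a finite-dimensional algebra, a one-sided inverse there is automatically two-sided, so ``has an inverse with entries in $\MR$'' is unambiguous and your use of Lemma~\ref{lemm:comp} is legitimate.
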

\begin{proof}
	The statement (1) would be clear. To see (2), we suppose that $\bigoplus_{i=1}^n\chi ^{\alpha _i}$ is faithful. Then, the homomorphism $\bigoplus_{i=1}^n\chi ^{\alpha _i} : (\C ^*)^n \to (\C ^*)^n$ has the inverse $\prod _{i=1}^n\lambda _{\beta _i}$, where $\{ \beta _i\}$ is dual to $\{ \alpha _i\}$. Let $z_i$ be a coordinate function of $V(\chi ^{\alpha _i})$. For a point $p \in \bigoplus _{i=1}^nV(\chi ^{\alpha _i})$, we set
	\begin{equation*}
		I(p):= \{ i \mid z_i(p) = 0\}.
	\end{equation*}
	We will see that two points $p$ and $q$ in $\bigoplus _{i=1}^nV(\chi ^{\alpha _i})$ belong to the same orbit if and only if $I(p)=I(q)$. The \lq\lq{only if}\rq\rq part is obvious. So we assume that $I(p)=I(q)$. Then, the fractions $z_i(p)/z_i(q)$ for $i \notin I(p)$ is an element in $\C ^*$ and $z_i(p)=z_i(q)=0$ if $i \in I(p)$. We take an element $g=(g_1,\dots ,g_n) \in (\C ^*)^n$ to be
	\begin{equation*}
		g_i= \begin{cases}
			z_i(p)/z_i(q) & \text{if $i \notin I(p)$},\\
			1 & \text{if $i \in I(p)$}. 
		\end{cases}
	\end{equation*}
	Let $h = \prod _{i=1}^n\lambda _{\beta _i}(g_i)$. Then, it follows from (2) in Lemma \ref{lemm:chlalach} that
	\begin{equation*}
		z_i(h\cdot q)=\chi ^{\alpha _i}(h)z_i(q)=\chi ^{\alpha _i}\big( \prod_{j=1}^n\lambda _{\beta _j}(g_j )\big)z_i(q)=g_iz_i(q)=z_i(p)
	\end{equation*} 
	for all $i =1,\dots ,n$. Namely, $h\cdot q=p$ and hence $p$ and $q$ belong to the same orbit. Hence the \lq\lq{if}\rq\rq part holds. Therefore, the representation space $\bigoplus_{i=1}^nV(\ch^{\alpha_i})$ has an open dense orbit which consists of points $p$ with $I(p)=\emptyset$. Other orbits correspond to nonempty subsets $I$ of \{1,\dots ,n\} and have real codimension $2|I|$. The lemma is proved. 
\end{proof}
Suppose that $\bigoplus_{i=1}^n\ch^{\alpha_i}$ is faithful.  Then the representation space $\bigoplus_{i=1}^nV(\ch^{\alpha_i})$ has an open dense orbit isomorphic to $(\C^*)^n$ as mentioned above.  Since $(\C^*)^n$ has a canonical orientation, $\bigoplus_{i=1}^nV(\ch^{\alpha_i})$ has an orientation induced from the open dense orbit.  On the other hand, $\bigoplus_{i=1}^nV(\ch^{\alpha_i})$ has an orientation induced as the complex vector space.  These two orientations agree when the representation is algebraic.  But otherwise, they may disagree.  For instance, if $n=1$ and our representation of $\C^*$ is given by the complex conjugation $g\to \bar g$, then those two orientations disagree.  The following lemma would be easy to see. 

\begin{lemm} \label{lemm:orient}
Suppose that $\bigoplus_{i=1}^n\ch^{\alpha_i}$ is faithful and let $\{\beta_i\}_{i=1}^n$ be dual to $\{\alpha_i\}_{i=1}^n$.  Then the two orientations on $\bigoplus_{i=1}^nV(\ch^{\alpha_i})$ mentioned above agree if and only if the determinant of the matrix in \eqref{bigmatrix} formed from the $\alpha_i$'s 
or $\beta_i$'s (viewed as a matrix of size $2n$) is positive.  
\end{lemm}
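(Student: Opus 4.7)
The plan is to reduce the orientation comparison to the sign of a single Jacobian determinant. Let $V := \bigoplus_{i=1}^n V(\ch^{\alpha_i})$ and take the basepoint $p_0 = (1,\ldots,1)$ in the open dense orbit. The orbit map
\[
\phi\colon (\C^*)^n \to V, \qquad g \mapsto g\cdot p_0 = \bigl(\ch^{\alpha_1}(g),\ldots,\ch^{\alpha_n}(g)\bigr)
\]
is a diffeomorphism from $(\C^*)^n$ onto the open dense orbit, so the orientation on $V$ induced from the canonical complex orientation of $(\C^*)^n$ is $\phi_*$ of that orientation. The complex orientation of $V$ agrees with this induced one iff $\phi$ preserves orientation, and since its Jacobian determinant is continuous and nowhere zero it has constant sign, so it suffices to compute at a single point, which I take to be $e \in (\C^*)^n$.

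I would then write $g_k = e^{x_k + \sqrt{-1}y_k}$ and apply the definition of $g^\mu$ from the start of the section to obtain
\[
g_k^{\alpha_i^k} = e^{b_i^k x_k + \sqrt{-1}(c_i^k x_k + v_i^k y_k)},
\]
where $\alpha_i^k = (b_i^k+\sqrt{-1}c_i^k,v_i^k)$. Writing $\ch^{\alpha_i}(g) = e^{X_i+\sqrt{-1}Y_i}$ gives
\[
X_i = \sum_k b_i^k x_k, \qquad Y_i = \sum_k \bigl(c_i^k x_k + v_i^k y_k\bigr),
\]
so the Jacobian of $\phi$ at $e$, computed with respect to the ordering $(x_1,y_1,\ldots,x_n,y_n)$ on the source and $(X_1,Y_1,\ldots,X_n,Y_n)$ on the target, has $(i,k)$-block
\[
\begin{bmatrix} b_i^k & 0 \\ c_i^k & v_i^k \end{bmatrix}.
\]
By the matrix presentation \eqref{matr}, this is exactly the entry $\alpha_i^k$ in the block matrix on the left of \eqref{bigmatrix}. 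Since the chosen orderings give the standard complex orientation on both source and target, the two orientations on $V$ agree iff this $2n \times 2n$ real matrix has positive determinant.

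Finally, to switch between the $\alpha_i$-matrix and the $\beta_i$-matrix, I would invoke the identity \eqref{bigmatrix}, which (as noted in the proof of Lemma \ref{lemm:comp}) says precisely that these two $2n\times 2n$ real matrices are inverse to one another; hence their determinants are reciprocal nonzero reals and have the same sign. The main obstacle is purely organizational: keeping the row/column conventions of \eqref{bigmatrix} and the real coordinate orderings consistent so that the Jacobian identifies literally with the matrix in \eqref{bigmatrix}. Once that identification is pinned down the conclusion is immediate.
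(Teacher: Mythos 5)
The paper gives no proof of Lemma~\ref{lemm:orient}; it simply states that the lemma ``would be easy to see,'' so there is nothing to compare your argument against. Your proof is correct and is the natural computation: the parametrizations $g_k = e^{x_k+\sqrt{-1}y_k}$ and $w_i = e^{X_i+\sqrt{-1}Y_i}$ are holomorphic near the identity, so the orderings $(x_1,y_1,\dots,x_n,y_n)$ and $(X_1,Y_1,\dots,X_n,Y_n)$ carry the canonical complex orientations, and the $(i,k)$-block of the Jacobian at $e$ is exactly $\alpha_i^k$ in the matrix form \eqref{matr}, assembling into the matrix in \eqref{bigmatrix}; since the orbit map is a diffeomorphism onto the open dense orbit, the sign at one point decides everything, and the $\alpha$- and $\beta$-matrices have the same sign because \eqref{bigmatrix} says they are mutual inverses. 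As a sanity check, your criterion reproduces the paper's preceding observation that for $n=1$ and $g\mapsto \bar g$ (i.e.\ $\mu_0 = \bigl[\begin{smallmatrix}1&0\\0&-1\end{smallmatrix}\bigr]$, determinant $-1$) the two orientations disagree.
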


We conclude this section with the classification of faithful complex $n$-dimensional representation spaces of $(\C^*)^n$ up to equivariant diffeomorphism or homeomorphism.  

\begin{lemm} \label{lemm:repclass}
Let $\bigoplus_{i=1}^nV(\ch^{\alpha_i})$ and $\bigoplus_{i=1}^nV(\ch^{\alpha_i'})$ be complex $n$-dimensional faithful representation spaces of $(\C^*)^n$.   Let $\beta _i$'s (resp. $\beta_i'$'s) be dual to $\alpha _i$'s (resp. $\alpha_i'$'s).  
\begin{enumerate}
\item The above representation spaces are equivariantly diffeomorphic if and only if there is a permutation $\sigma$ on $[n]=\{1,2,\dots,n\}$ such that $\beta'_{\sigma(i)}=\beta_i$ or $\beta_i\mu_0$ for any $i$ where $\mu_0=\begin{bmatrix} 1 &0\\
0 & -1\end{bmatrix}$.
\item The above representation spaces are equivariantly homeomorphic if and only if there is a permutation $\sigma$ on $[n]$ and some $\mu _i$ in 
\begin{equation*}
\MS := \left\{
\begin{bmatrix}
	b & 0 \\
	c & v 
\end{bmatrix} \in \MR \mid b>0, v=\pm 1 
\right\} 
\end{equation*}
such that $\beta'_{\sigma(i)}=\beta_i\mu_i$ for any $i$. 
\end{enumerate}
\end{lemm}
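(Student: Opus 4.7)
The plan is to prove the ``if'' and ``only if'' directions separately, handling parts~(1) and~(2) in parallel with the only distinction being the regularity required of certain auxiliary maps.

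\textbf{If direction.} The permutation $\sigma$ furnishes an obvious coordinate-permuting equivariant diffeomorphism, so after reducing to $\sigma=\mathrm{id}$ it suffices, for each $i$, to construct an equivariant diffeomorphism (resp.\ homeomorphism) $F_i\colon V(\chi^{\alpha_i})\to V(\chi^{\alpha_i'})$ whenever $\beta_i'=\beta_i\mu_i$. Dualizing gives $\alpha_i'=\mu_i^{-1}\alpha_i$, so the target character is $(\chi^{\alpha_i})^{\mu_i^{-1}}$. I take $F_i(z):=z^{\mu_i^{-1}}$; equivariance is immediate from $(\chi^{\alpha_i}(g)z)^{\mu_i^{-1}}=(\chi^{\alpha_i}(g))^{\mu_i^{-1}}z^{\mu_i^{-1}}$. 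In~(1), $\mu_i^{-1}\in\{\1,\mu_0\}$ so $F_i$ is the identity or complex conjugation, both smooth; in~(2), $\mu_i^{-1}\in\MS$ (as $\MS$ is closed under inversion), and Lemma~\ref{lemm:gmu}(1) ensures that $F_i$ extends to a self-homeomorphism of $\C$.

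\textbf{Only if direction.} Let $f$ be the given equivariant map, and let $W_i:=V(\chi^{\alpha_i})$ and $W_i':=V(\chi^{\alpha_i'})$ denote the corresponding coordinate subspaces. By Lemma~\ref{lemm:faithful}(2) these are precisely the closures of the complex $1$-dimensional $(\C^*)^n$-orbits, so $f$ must permute them: $f(W_i)=W_{\sigma(i)}'$ for some permutation $\sigma$. Comparing stabilizers at generic points of $W_i$ and $W_{\sigma(i)}'$ forces $\ker\chi^{\alpha_i}=\ker\chi^{\alpha_{\sigma(i)}'}$; since $W_i\setminus\{0\}$ is a $2$-dimensional orbit, $\chi^{\alpha_i}$ is surjective onto $\C^*$, so both characters induce isomorphisms $(\C^*)^n/\ker\chi^{\alpha_i}\xrightarrow{\sim}\C^*$ differing by a unique automorphism $g\mapsto g^{\nu_i}$ of $\C^*$ with $\nu_i\in\MR$ invertible. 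Thus $\chi^{\alpha_{\sigma(i)}'}=(\chi^{\alpha_i})^{\nu_i}$, giving $\alpha_{\sigma(i)}'=\nu_i\alpha_i$ and, dually, $\beta_{\sigma(i)}'=\beta_i\nu_i^{-1}$. For part~(1), $df$ at the fixed point $0\in W_i$ gives an equivariant real-linear isomorphism $W_i\xrightarrow{\sim}W_{\sigma(i)}'$. A Schur-type argument (the multivariate analogue of Lemma~\ref{lemm:gmu}(4)) shows that two complex one-dimensional smooth representations of $(\C^*)^n$ are isomorphic as real representations iff their characters are equal or complex-conjugate; by the uniqueness of $\nu_i$, this forces $\nu_i\in\{\1,\mu_0\}$, and $\mu_0^{-1}=\mu_0$ yields the claim.

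\textbf{Main obstacle: part~(2).} Here no tangent-space shortcut is available, so the argument must be purely topological. Writing $\nu_i=(b+\sqrt{-1}c,v)$, invertibility of $\nu_i$ in $\MR$ forces $b\neq 0$ and $v=\pm1$, and it remains to rule out $b<0$. The restriction $f|_{W_i}$ satisfies $f(\chi^{\alpha_i}(g)z)=(\chi^{\alpha_i}(g))^{\nu_i}f(z)$. Using surjectivity of $\chi^{\alpha_i}$, choose $g_t\in(\C^*)^n$ with $\chi^{\alpha_i}(g_t)=t\in\R_{>0}$; for any fixed $z\neq 0$ we get $f(tz)=t^{\nu_i}f(z)=t^b f(z)$. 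As $t\to\infty$, $tz\to\infty$ in $\C$, so $f(tz)\to\infty$, because a self-homeomorphism of $\C$ necessarily sends sequences diverging to infinity to such sequences (any accumulation point of $f(tz)$ in $\C$ would, via $f^{-1}$, produce one for $tz$). But $t^b f(z)\to 0$ if $b<0$, a contradiction. Hence $b>0$, $\nu_i\in\MS$, and $\MS$'s closure under inversion gives $\nu_i^{-1}\in\MS$, completing the argument.
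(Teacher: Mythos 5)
Your proof is correct and follows essentially the same strategy as the paper's: reduce the ``if'' direction to the coordinatewise map $z\mapsto z^{\mu_i^{-1}}$, and for ``only if'' extract a permutation $\sigma$ and a correcting automorphism from the invariant geometry, then constrain it via $df_0$ in part (1) and a topological limit argument in part (2). The main presentational difference is that you establish $\sigma$ and $\nu_i$ by looking at the complex one-dimensional orbit closures $W_i=V(\chi^{\alpha_i})$ and their stabilizers $\ker\chi^{\alpha_i}$, whereas the paper works dually with the codimension-one subspaces $V_i=\bigoplus_{j\neq i}V(\chi^{\alpha_j})$ and the $\C^*$-subgroups $\lambda_{\beta_i}(\C^*)$ that fix them pointwise; likewise you get $b>0$ from a $t\to\infty$ argument along $W_i$, while the paper sends $g\to 0$ along $\lambda_{\beta_i}$ and tracks a generic point into $V_i$. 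These are equivalent in content, and your version also unifies the extraction of $\sigma,\nu_i$ across parts (1) and (2), which the paper does separately. One small slip: you write $f(tz)=t^{\nu_i}f(z)=t^{b}f(z)$, but when the $c$-component of $\nu_i$ is nonzero $t^{\nu_i}=t^{b+\sqrt{-1}c}\neq t^{b}$; the argument still goes through because $|t^{\nu_i}|=t^{b}$, which is all you need to force $b>0$.
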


\begin{proof}
The \lq\lq if" part in (1) is clear.  Suppose that there is an equivariant diffeomorphism $f\colon \bigoplus_{i=1}^nV(\ch^{\alpha_i})\to \bigoplus_{i=1}^nV(\ch^{\alpha_i'})$.  Since the origin is the unique fixed point in these representation spaces and their tangential representation space at the origins are themselves, the differential $df_0$ of $f$ at the origin induces an isomorphism between $\bigoplus_{i=1}^n\ch^{\alpha_i}$ and $\bigoplus_{i=1}^n\ch^{\alpha_i'}$ as \emph{real} representations. Since the representation $\bigoplus_{i=1}^n\ch^{\alpha_i}$ (resp. $\bigoplus_{i=1}^n\ch^{\alpha_i'}$) is faithful, the factors $\ch^{\alpha_i}$ (resp. $\ch^{\alpha_i'}$) are mutually non-isomorphic as real representations.  Therefore, $df_0$ maps $V(\ch^{\alpha_i})$ to some factor of $\bigoplus_{i=1}V(\ch^{\alpha_i'})$ isomorphically as real representation spaces, so there is a permutation $\sigma$ on $[n]$ such that $\alpha_i=\alpha'_{\sigma(i)}$ or $\bar{\alpha}'_{\sigma(i)}$ by Lemma~\ref{lemm:gmu} (4). This implies the \lq\lq only if" part in (1) because $\bar{\alpha}'_{\sigma(i)}=\mu_0\alpha'_{\sigma(i)}$ and the $\beta _i$'s (resp. $\beta_i'$'s) are dual to the $\alpha _i$'s (resp. $\alpha_i'$'s).

The proof of  statement (2) is as follows.  We shall prove the \lq\lq if" part first.  We may assume that $\sigma$ is the identity without loss of generality.  Since $\mu_i$ is in $\mathcal{S}$, so is $\mu_i^{-1}$ and the map 
\[
(z_1,\dots,z_n) \to (z_1^{\mu_1^{-1}},\dots,z_n^{\mu_n^{-1}})
\]
is a homeomorphism of $\C^n$ by Lemma~\ref{lemm:gmu} (1) and it easily follows from \eqref{expo} that it gives an equivariant homeomorphism from  $\bigoplus_{i=1}^nV(\ch^{\alpha_i})$ to $\bigoplus_{i=1}^nV(\ch^{\alpha_i'})$ with $\alpha_i'=\mu_i^{-1}\alpha_i$ for any $i$.  This implies the \lq\lq if" part because the $\beta _i$'s (resp. $\beta_i'$'s) are dual to the $\alpha _i$'s (resp. $\alpha_i'$'s).  

Conversely, suppose there is an equivariant homeomorphism $f\colon \bigoplus_{i=1}^nV(\ch^{\alpha_i})\to \bigoplus_{i=1}^nV(\ch^{\alpha_i'})$.  The fixed point set in $\bigoplus_{i=1}^nV(\ch^{\alpha_i})$ under the action of the $\C^*$-subgroup $\lambda _{\beta _i}(\C ^*)$ is $\bigoplus_{j\not=i}V(\chi^{\alpha_j})=:V_i$ which is of complex codimension one.  Note that any $\C^*$-subgroup of $(\C^*)^n$ whose fixed point set in $\bigoplus_{i=1}^nV(\ch^{\alpha_i})$ is of complex dimension one is $\lambda _{\beta _i}(\C ^*)$ for some $i$.  Since the same is true for $\bigoplus_{i=1}^nV(\ch^{\alpha_i'})$ and the map $f$ above is equivariant homeomorphism, one concludes that there is a permutation $\sigma$ on $[n]$ such that 
\[
f(V_{i})=V'_{\sigma(i)}\quad\text{and}\quad \lambda_{\beta_i}(\C ^*)=\lambda_{\beta'_{\sigma(i)}}(\C ^*)\quad\text{for each $i$},
\]
where $V_i'$ is defined similarly to $V_i$.  The latter identity above implies that there is an element $\mu_i=
\begin{bmatrix}
	b_i & 0 \\
	c_i & v_i 
\end{bmatrix}\in \MR$ such that 
\begin{equation*} \label{bb}
\beta'_{\sigma(i)}=\beta_i\mu_i \quad\text{for each $i$}.
\end{equation*}
Here $v_i=\pm 1$ because $\lambda_{\beta'_{\sigma(i)}}$ and $\lambda_{\beta_i}$ are both injective.  Moreover, $b_i>0$ because for any generic point $\iota $ in $\bigoplus_{i=1}^nV(\ch^{\alpha_i})$, $\lim_{g \to 0}\lambda_{\beta_i}(g)(\iota)$ (resp. $\lim_{g \to 0}\lambda_{\beta'_{\sigma(i)}}(g)(f(\iota))$) converges to a point in $V_i$ (resp. $V'_{\sigma(i)}$) and $f(V_i)=V'_{\sigma(i)}$. Hence, $\mu _i$ lies in $\MS$, proving the \lq\lq{only if}\rq\rq part in statement (2). 
\end{proof}

\section{Topological toric manifold and topological fan} \label{sect:3}

A \emph{toric variety} of complex dimension $n$ is a normal algebraic variety of complex dimension $n$ with an effective \emph{algebraic} action of $(\C^*)^n$ having an open dense orbit.  A toric variety has only finitely many orbits.  A compact smooth toric variety is called a \emph{toric manifold}.  It is known that a toric manifold is covered by finitely many invariant open subsets each equivariantly and algebraically isomorphic to a direct sum of complex one-dimensional \emph{algebraic} representation spaces of $(\C^*)^n$.  Based on this observation we define our topological analogue of a toric manifold as follows. 

\begin{defi}
We say that a closed smooth manifold $X$ of dimension $2n$ with an effective \emph{smooth} action of $(\C^*)^n$ having an open dense orbit is a  \emph{topological toric manifold} if it is covered by finitely many invariant open subsets each equivariantly diffeomorphic to a direct sum of complex one-dimensional \emph{smooth} representation spaces of $(\C^*)^n$.  
\end{defi}

\begin{rema}
Since the action of $(\C^*)^n$ on the topological toric manifold $X$ is assumed to be effective, the direct sum of complex one-dimensional \emph{smooth} representation spaces of $(\C^*)^n$ in the definition above must be faithful.  On the other hand, it is not difficult to see that any faithful smooth real $2n$-dimensional representation of $(\C^*)^n$ is isomorphic to a direct sum of complex one-dimensional representations as real representations. Therefore, we may replace \lq\lq a direct sum of complex one-dimensional smooth representation spaces of $(\C^*)^n$" in the definition above by \lq\lq a smooth representation space of $(\C^*)^n$".
\end{rema}

\begin{lemm} \label{lemm:open}
A topological toric manifold has only finitely many orbits and the real codimension of an orbit different from the open dense orbit is at least two. 
\end{lemm}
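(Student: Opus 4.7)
The plan is to reduce the global statement to the local model provided by the definition, where Lemma~\ref{lemm:faithful}(2) has already done all the substantive work. By definition, $X$ is covered by finitely many invariant open subsets $U_1,\dots,U_N$, each equivariantly diffeomorphic to a direct sum $\bigoplus_{i=1}^n V(\ch^{\alpha_i})$ of complex one-dimensional smooth representation spaces of $(\C^*)^n$. Since the action of $(\C^*)^n$ on $X$ is effective, the restriction to $U_j$ is faithful, so each $\bigoplus_{i=1}^n V(\ch^{\alpha_i})$ is a faithful representation space, and Lemma~\ref{lemm:faithful}(2) applies to each chart.

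The key observation is that each $U_j$ is $(\C^*)^n$-invariant, so any $X$-orbit meeting $U_j$ is entirely contained in $U_j$ and coincides with a $(\C^*)^n$-orbit of $U_j$. Thus every $X$-orbit appears as an orbit of some chart $U_j$. Since Lemma~\ref{lemm:faithful}(2) tells us that each $U_j$ has exactly $2^n$ orbits (indexed by subsets $I\subseteq[n]$), the total number of orbits of $X$ is bounded above by $N\cdot 2^n$, proving finiteness.

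For the codimension claim, let $O$ be an $X$-orbit different from the open dense orbit $O_0$, and pick $U_j$ with $O\subseteq U_j$. Under the local equivariant diffeomorphism, $O$ corresponds to the orbit associated to some subset $I\subseteq[n]$ in the sense of Lemma~\ref{lemm:faithful}(2). If $I=\emptyset$, then $O$ would be open in $X$; but $O_0$ is dense and so meets every nonempty open set, which would force $O=O_0$, contradicting our choice. Hence $I\neq\emptyset$, and Lemma~\ref{lemm:faithful}(2) gives that the real codimension of $O$ in $U_j$, and therefore in $X$, equals $2|I|\geq 2$.

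No real obstacle arises: the argument is a straightforward local-to-global assembly, the only mild point being the verification that a non-dense $X$-orbit cannot correspond to $I=\emptyset$ in its chart, for which one invokes uniqueness of the open dense orbit together with the density of $O_0$.
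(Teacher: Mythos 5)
Your proof is correct and follows essentially the same route as the paper: the paper's own proof simply notes that effectiveness forces the local representations to be faithful and then invokes Lemma~\ref{lemm:faithful}(2), which is exactly the reduction you carry out. You supply more detail (the $2^n$ bound per chart and the $I\neq\emptyset$ argument for non-dense orbits), but the underlying idea is identical.
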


\begin{proof}
The effectiveness of the action of $(\C^*)^n$ in $X$ implies the faithfulness of the complex $n$-dimensional representations in the definition of a topological toric manifold above.  Therefore the lemma follows from Lemma~\ref{lemm:faithful} (2). 
\end{proof}

In particular, the fixed point set $X^{(\C^*)^n}$ of a topological toric manifold $X$ consists of finitely many points. It follows from the definition of a topological toric manifold that each fixed point $p\in X^{(\C^*)^n}$ has an invariant open neighborhood, denoted $O_p$, which is equivariantly diffeomorphic to a faithful representation space of $(\C^*)^n$.  Then we have
$$X=\bigcup_{p\in X^{(\C^*)^n}}O_p.$$

\begin{prop} \label{prop:simply}
A topological toric manifold is simply connected, in particular, orientable. 
\end{prop}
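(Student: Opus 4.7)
The plan is to apply the Seifert--van Kampen theorem to the open cover $X=\bigcup_{p\in X^{(\C^*)^n}} O_p$ introduced just before the proposition. By definition each $O_p$ is equivariantly diffeomorphic to a faithful smooth $(\C^*)^n$-representation space, hence to $\C^n$ as a smooth manifold. In particular every $O_p$ is contractible, so simply connected, and the entire argument reduces to verifying the path-connectivity hypotheses for van Kampen.

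The key preliminary observation is that every chart $O_p$ contains the global open dense $(\C^*)^n$-orbit $X_0$ of $X$, and that $X_0$ is in fact the open dense orbit of $O_p$ supplied by Lemma~\ref{lemm:faithful}(2). Indeed, $X_0$ is open and dense in $X$ and so meets the non-empty open set $O_p$; since $X_0$ is a single orbit and $O_p$ is invariant, $X_0\subseteq O_p$. Being a single orbit that is open in $O_p$, it must coincide with the unique open dense orbit of the representation space $O_p$, since the other orbits have real codimension at least two by Lemma~\ref{lemm:faithful}(2) and so cannot be open. Consequently, for any $p,q$ the intersection $O_p\cap O_q$ is an open subset of the manifold $X$ containing the path-connected dense subset $X_0\cong(\C^*)^n$; any point of $O_p\cap O_q$ has a Euclidean neighborhood in the chart $O_p\cong\C^n$ meeting $X_0$ and is therefore joined by a local path to $X_0$, so $O_p\cap O_q$ is path-connected.

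With this in hand I would run van Kampen inductively. Enumerate the fixed points as $p_1,\dots,p_k$, set $U_j:=O_{p_1}\cup\cdots\cup O_{p_j}$, and prove by induction on $j$ that $U_j$ is path-connected and simply connected. In the inductive step, $U_{j-1}\cap O_{p_j}=\bigcup_{i<j}(O_{p_i}\cap O_{p_j})$ is a union of path-connected open sets all containing $X_0$, hence is itself path-connected. Seifert--van Kampen applied to $U_j=U_{j-1}\cup O_{p_j}$ with both pieces simply connected and path-connected intersection then gives $\pi_1(U_j)=1$ regardless of $\pi_1(U_{j-1}\cap O_{p_j})$, and setting $j=k$ yields $\pi_1(X)=1$. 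The \emph{in particular} clause is automatic: a connected simply connected manifold is orientable, since its orientation double cover would otherwise be a non-trivial connected double cover of a simply connected space.

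I do not expect any serious obstacle. The only mildly delicate point is the identification of the global open dense orbit $X_0$ with the local open dense orbit inside every chart, which is exactly what makes all the pairwise intersections $O_p\cap O_q$ path-connected and lets the van Kampen induction proceed without further input.
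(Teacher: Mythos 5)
Your proof is correct, but it takes a genuinely different route from the paper's. The paper argues in two steps without van Kampen: first, since $X\setminus O$ is a finite union of orbits of real codimension $\ge 2$ (Lemma~\ref{lemm:open}), the inclusion $\kappa\colon O\hookrightarrow X$ induces a \emph{surjection} $\kappa_*\colon\pi_1(O)\to\pi_1(X)$ by a general-position argument; second, $\kappa$ factors as $O\hookrightarrow O_p\hookrightarrow X$ for any fixed point $p$, and $O_p\cong\C^n$ is simply connected, so $\kappa_*$ is also trivial, forcing $\pi_1(X)=1$. You instead run Seifert--van Kampen inductively over the finite cover $\{O_p\}$, with the crucial preliminary observation that the global open dense orbit $X_0\cong(\C^*)^n$ lies in every $O_p$ (since $O_p$ is invariant and meets the dense $X_0$), so that every pairwise intersection $O_p\cap O_q$, and hence every $U_{j-1}\cap O_{p_j}$, is a path-connected open set containing the common basepoint; with all pieces contractible, van Kampen then kills $\pi_1$ at each step. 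Your argument is more elementary and self-contained --- it uses only van Kampen and the presence of the dense orbit, not the codimension-two transversality fact --- at the cost of an explicit induction; the paper's is shorter but quietly invokes the standard fact that deleting a finite union of codimension-$\ge 2$ strata from a manifold induces a $\pi_1$-surjection. Both correctly identify $O_p\cong\C^n$ as the source of simple connectivity, and both treatments of the ``in particular, orientable'' clause are standard.
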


\begin{proof}
The open dense orbit in a topological toric manifold $X$, denoted $O$, can be identified with $(\C^*)^n$.  Since it is connected, its closure, that is $X$, is also connected.  

Any orbit in $X$ except $O$ has codimension at least two and the number of the orbits is finite by Lemma~\ref{lemm:open}.  Therefore the homomorphism $\kappa_* \colon \pi_1(O)\to \pi_1(X)$ induced from the inclusion map $\kappa\colon O\to X$ is surjective.  On the other hand, since $\kappa$ factors through the inclusion maps $O\to O_p\to X$ and $O_p$ is simply connected because it is diffeomorphic to a representation space, $\kappa_*$ must be trivial.  This proves that $\pi_1(X)$ is trivial.  
\end{proof}

In the algebraic category, i.e. in the case of toric manifolds, the existence of an open dense orbit implies the existence of a fixed point.  However, this is not always the case in the smooth category as is shown in the following example.  

\begin{exam}
Take a smooth vector field on $S^1$ having only one zero point, say $x$.  Integrating the vector field produces a smooth action of $\R$ on $S^1$ which has two orbits $\{x\}$ and $S^1\backslash\{x\}$.  We regard the action of $\R$ as an action of $\R_{>0}$ through a logarithmic function  $\R_{>0}\to \R$.  On the other hand, we take another $S^1$ with a natural free $S^1$-action.  Their product $S^1\times S^1$ supports a smooth action of $\R_{>0}\times S^1=\C^*$.  This action of $\C^*$ on $S^1\times S^1$ has no fixed point while it has two orbits, $\{x\}\times S^1$ and $(S^1\backslash\{x\})\times S^1$, and the latter orbit is open and dense in $S^1\times S^1$.  
\end{exam}

We say that a closed \emph{connected} smooth submanifold of a topological toric manifold $X$ of real codimension two is \emph{characteristic} if it is fixed pointwise under some $\C^*$-subgroup of $(\C^*)^n$.  There are only finitely many characteristic submanifolds, which we denote by $X_1,\dots,X_m$.  We define 
\[
\SiX:=\{ I\subset [m]\mid X_I:=\bigcap_{i\in I}X_i\not=\emptyset\}\cup \{\emptyset\}.
\]
This is an abstract simplicial complex of dimension $n-1$ and pure, i.e. any simplex in $\SiX$ is contained in some simplex of maximal dimension $n-1$.    We will denote by $\Si^{(k)}(X)$ the subset of $\Si(X)$ consisting of all $(k-1)$-simplices in $\Si(X)$.  Note that $\Si^{(1)}(X)$ can be identified with $[m]$. 

Since $X$ is locally equivariantly diffeomorphic to a direct sum of complex one-dimensional smooth representation spaces of $(\C^*)^n$, the $X_i$'s intersect transversally so that $X_I$ is a closed smooth submanifold of dimension $2(n-|I|)$ for $I\in \Si(X)$,  in particular, $X_I$ is of dimension $0$ when $I\in \Si^{(n)}(X)$ and   
\begin{equation} \label{fixed}
X^{(\C^*)^n}=\bigcup_{I\in \Si^{(n)}(X)}X_I.
\end{equation}
We will see in Lemma~\ref{lemm:XI} below that $X_I$ is connected for any $I\in \Si(X)$, in particular, $X_I$ is one point when $I\in \Si^{(n)}(X)$. 

Since $X_i$ is fixed pointwise under some $\C^*$-subgroup of $(\C^*)^n$, the normal bundle $\nu_i$ of $X_i$ to $X$ is orientable, so that each $X_i$ is orientable because so is $X$ by Proposition~\ref{prop:simply}.  

\medskip
\noindent
{\bf Convention.} 
Since a topological toric manifold $X$ of dimension $2n$ has an open dense orbit which can be identified with $(\C^*)^n$ and $(\C^*)^n$ has a canonical orientation, we give $X$ the orientation induced from the orientation of $(\C^*)^n$ throughout this paper unless otherwise stated.  

\medskip

A choice of an orientation on each $X_i$ together with the orientation on $X$ is called an \emph{omniorientation} on $X$.  An omniorientation on $X$ determines a compatible orientation on the normal bundle $\nu_i$.
Let $C_i$ be the $\C^*$-subgroup of $(\C^*)^n$ which fixes $X_i$ pointwise.  It acts on the normal bundle $\nu_i$ effectively through the differential and preserves each fiber.  As is easily checked, a real two-dimensional faithful representation space $V$ of $\C^*$ has exactly two complex structures invariant under the circle subgroup $S^1$ of $\C^*$ and they have different orientations.  The action of $g\in S^1$ on $V$ with the complex structures is scalar multiplication by $g$ or $g^{-1}$ according to the complex structures.  Moreover, the action of $\C^*$ on $V$ preserves the complex structure (but the action of $g\in\C^*$ on $V$ is not necessarily scalar multiplication by $g$ or $g^{-1}$, see Section~\ref{sect:2}).  Applying this fact to each fiber of $\nu_i$, one sees that $\nu_i$ admits a unique complex structure which is invariant under the circle subgroup of $C_i$ and whose induced orientation is compatible with the omniorientation.  Moreover, the action of $C_i$ on $\nu_i$ preserves the complex structure on $\nu_i$. With this understood, we have 
\begin{lemm}  \label{lemm:betai}
For each $i\in [m]=\Si^{(1)}(X)$, there is a unique $\beta_i(X)\in \MR^n$ such that 
\begin{equation} \label{norm}
\text{$\la_{\beta_i(X)}(\C^*)$ fixes $X_i$ pointwise and  $\la_{\beta_i(X)}(g)_*\xi=g\xi$ for any $g\in\C^*, \xi\in\nu_i$},
\end{equation}
where $\la_{\beta_i(X)}(g)_*$ denotes the differential of  $\la_{\beta_i(X)}(g)$ and the right hand side denotes scalar multiplication by $g$.  
\end{lemm}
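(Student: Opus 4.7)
The plan is to start from an arbitrary smooth parametrization of the $\C^*$-subgroup $C_i$ and rescale it so that condition~\eqref{norm} holds. Since every smooth homomorphism $\C^* \to (\C^*)^n$ has the form $\la_\beta$ for a unique $\beta \in \MR^n$, and $C_i$ is by hypothesis a $\C^*$-subgroup, I can fix some $\beta_0 \in \MR^n$ such that $\la_{\beta_0}$ is a smooth isomorphism onto $C_i$.  Pick a point $p \in X_i$.  Then $C_i$ acts linearly on the fiber $\nu_{i,p}$ and preserves the complex structure there supplied by the omniorientation (as noted just before the lemma), so pulling the action back along $\la_{\beta_0}$ yields a smooth complex one-dimensional representation of $\C^*$ on $\nu_{i,p}$ of the form $g \mapsto g^\mu$ for some $\mu \in \MR$.

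Next I claim that $\mu$ is invertible in the ring $\MR$.  Because the $(\C^*)^n$-action on $X$ is effective and $X$ is connected, an element of $C_i$ that acted trivially on $\nu_{i,p}$ would, by fiberwise linearity and connectedness of $X_i$, act trivially on every fiber of $\nu_i$, hence fix a tubular neighborhood of $X_i$ and so act as the identity on all of $X$ --- a contradiction.  Thus $C_i$ acts faithfully on $\nu_{i,p}$; faithfulness of $g \mapsto g^\mu$ forces $v = \pm 1$ (from the circle subgroup $S^1 \subset \C^*$) and $b \neq 0$ (from the subgroup $\R_{>0} \subset \C^*$), and in the matrix presentation~\eqref{matr} these are exactly the invertibility of $\mu$, its determinant being $bv$.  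Smoothness of the action and connectedness of $X_i$ make the character $\mu$ independent of the chosen base point.

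Now set $\beta_i(X) := \beta_0 \mu^{-1}$, understood as componentwise right multiplication in $\MR^n$.  Applying the identity $(g^{\mu_1})^{\mu_2} = g^{\mu_2\mu_1}$ from~\eqref{expo} entry by entry one finds $\la_{\beta_i(X)}(g) = \la_{\beta_0}(g^{\mu^{-1}}) \in \la_{\beta_0}(\C^*) = C_i$, so $X_i$ remains pointwise fixed, and the induced action on $\nu_{i,p}$ is multiplication by $(g^{\mu^{-1}})^\mu = g^{\mu\mu^{-1}} = g^{\1} = g$, as required; by fiberwise linearity and continuity this propagates to every fiber of $\nu_i$.  For uniqueness, if $\beta' \in \MR^n$ also satisfied~\eqref{norm}, then $\la_{\beta'}(\C^*) = C_i$ would force $\beta' = \beta_i(X)\,\nu$ (componentwise) for some invertible $\nu \in \MR$, and the scalar-multiplication requirement would then yield $g^\nu = g$ for every $g \in \C^*$, hence $\nu = \1$ and $\beta' = \beta_i(X)$.

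The main obstacle is the invertibility of $\mu$, specifically ruling out $b = 0$, i.e.\ showing that the $\R_{>0}$-part of $C_i$ cannot fix the fiber $\nu_{i,p}$ pointwise.  Once $\mu \in \MR$ is known to be invertible, existence and uniqueness reduce to a formal computation inside the ring $\MR$ using~\eqref{expo}.
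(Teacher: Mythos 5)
Your strategy matches the paper's: pick any parametrization $\la_{\beta_0}$ of $C_i$, read off the induced character $g\mapsto g^\mu$ on a normal fiber, and set $\beta_i(X)=\beta_0\mu^{-1}$; the formal computations via \eqref{expo} and the uniqueness argument are sound. However, the two structural facts you need about $\mu$ are not fully justified.

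The claimed independence of $\mu$ from the base point does not follow from ``smoothness of the action and connectedness of $X_i$'' alone: $\MR\cong\C\times\Z$ has a nondiscrete $\C$-factor, so a smoothly varying character could a priori have varying $b$- and $c$-entries even over a connected base. The paper derives \emph{local constancy} from the fact that $X$ is covered by invariant open sets equivariantly diffeomorphic to representation spaces of $(\C^*)^n$, on each of which the normal action is given by a single linear model; your proof should invoke this. Likewise, the step ``fixes a tubular neighborhood of $X_i$, so acts as the identity on all of $X$'' is not a general principle for smooth actions of noncompact groups, and your propagation of triviality to every fiber also tacitly uses the as-yet-unestablished independence of $\mu$. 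The cleanest route to invertibility of $\mu$ is to take $p$ to be a $(\C^*)^n$-fixed point lying in $X_i$ (these exist since $\SiX$ is pure and $\Si^{(n)}(X)\neq\emptyset$): then $\tau_pX=\tau_pX_i\oplus\nu_i|_p$ is a faithful $(\C^*)^n$-representation, and since $C_i$ acts trivially on the $\tau_pX_i$-summand it must act faithfully on $\nu_i|_p$, giving $b\neq 0$ and $v=\pm 1$ directly without any tubular-neighborhood argument.
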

\begin{proof}
Choose any group isomorphism $\gamma_i\colon \C^*\to C_i(\subset (\C^*)^n)$.  Then, for each fiber $\nu_i|_x$ of $\nu_i$ over $x\in X_i$, one has 
\[
\text{$\la_{\gamma_i}(g)_*\xi=\rho_i^x(g)\xi$\quad for any $g\in\C^*, \xi\in \nu_i|_x$}
\]
with some automorphism $\rho_i^x$ of $\C^*$.  However, $\rho_i^x$ is locally constant on $x$ because $X$ is covered by representation spaces of $(\C^*)^n$. Since $X_i$ is connected, it follows that $\rho_i^x$ is actually independent of $x\in X_i$; so we may denote $\rho_i^x$ by $\rho_i$.  Then, $\gamma_i\circ \rho_i^{-1}$ will be the desired $\beta_i(X)$.  The uniqueness of $\beta_i(X)$ would be obvious. 
\end{proof}

Note that if we reverse the orientation on $X_i$, then the orientation on $\nu_i$ will be reversed and hence the complex structure on $\nu_i$ will become complex conjugate to the original one.  

If a fixed point $p$ is in $X_I$ for $I\in\Si^{(n)}$, then 
\begin{equation} \label{TpX}
\tau_pX=\bigoplus_{i\in I}\nu_i|_p\quad\text{as real $(\C^*)^n$-representation spaces}
\end{equation}
where $\tau_pX$ denotes the tangential representation space of $X$ at $p$. 
Since $X$ is oriented by our convention, the left hand side of \eqref{TpX} has the orientation while the omniorientation on $X$ determines an orientation at the right hand side of \eqref{TpX}, and these two orientations may not agree.  For instance, if we reverse the orientation on only one $X_i$ for $i\in I$, then the orientation at the right hand side of \eqref{TpX} will be reversed while that at the left hand side of \eqref{TpX} will remain unchanged.  

We shall write $\beta_i(X)$ in Lemma~\ref{lemm:betai} as 
\[
\beta_i(X)=(b_i(X)+\sqrt{-1}c_i(X),v_i(X))\in \MR^n=\RZRn.
\]

\begin{lemm} \label{lemm:lindep}
Suppose that $X$ is omnioriented and let $I\in\Si^{(n)}(X)$.  Then $\{b_i(X)\}_{i\in I}$ and $\{v_i(X)\}_{i\in I}$ are bases of $\R^n$and $\Z^n$ respectively.  Therefore, $\{\beta_i(X)\}_{i\in I}$ has a dual set, denoted $\{\alpha^I_i(X)\}_{i\in I}$, and then the right hand side of \eqref{TpX} is isomorphic to $\bigoplus_{i\in I}V(\ch^{\alpha^I_i(X)})$ as complex representations.  In particular, the invariant open neighborhood $O_p$ of $p$ is equivariantly diffeomorphic to $\bigoplus_{i\in I}V(\ch^{\alpha^I_i(X)})$ and $O_p\cap X_i\not=\emptyset$ if and only if $i\in I$.  
\end{lemm}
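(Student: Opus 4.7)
Since $p$ is a fixed point, the definition of topological toric manifold gives an invariant open neighborhood $U$ of $p$ and an equivariant diffeomorphism $\phi \colon U \to \bigoplus_{j=1}^n V(\ch^{\alpha_j'})$ for some $\alpha_j' \in \MR^n$. Effectiveness of the $(\C^*)^n$-action on $X$ forces this representation to be faithful, so by Lemma~\ref{lemm:faithful}(1) the matrix built from the $\alpha_j'$'s is invertible in $\MR$; reading off its inverse yields a dual family $\{\beta_j'\}_{j=1}^n$ satisfying $\langle \alpha_k', \beta_j' \rangle = \delta_{jk}\1$. Writing $V_j' := \bigoplus_{k \ne j} V(\ch^{\alpha_k'})$ for the $j$-th coordinate hyperplane, Lemma~\ref{lemm:chlalach}(2) gives $\ch^{\alpha_k'}(\la_{\beta_j'}(g)) = g^{\delta_{jk}\1}$, so $\la_{\beta_j'}(\C^*)$ fixes $V_j'$ pointwise and acts on the normal line $V(\ch^{\alpha_j'})$ by $\xi \mapsto g\xi$. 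The plan is to identify each $\beta_i(X)$ with one of the $\beta_j'$'s via a bijection between the characteristic submanifolds through $p$ and the coordinate hyperplanes $V_j'$.

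By Lemma~\ref{lemm:faithful}(2), the $(\C^*)^n$-orbits on $\bigoplus_j V(\ch^{\alpha_j'})$ of real codimension $2$ are those with exactly one coordinate zero, and their closures are precisely the $V_j'$. Since each $X_i$ is closed, connected, of real codimension $2$, and fixed by a $\C^*$-subgroup, each nonempty $O_p \cap X_i$ is carried by $\phi$ onto some $V_j'$, and conversely each $V_j'$ pulls back into a characteristic submanifold through $p$. Since $p \in X_i$ iff $i \in I$ and $|I| = n$, this produces a bijection $\sigma \colon I \to [n]$ with $\phi(O_p \cap X_i) = V_{\sigma(i)}'$, which in particular gives the claim that $O_p \cap X_i \ne \emptyset$ iff $i \in I$.

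To match $\beta_i(X)$ with $\beta_{\sigma(i)}'$, observe that both $\la_{\beta_i(X)}(\C^*)$ and $\la_{\beta_{\sigma(i)}'}(\C^*)$ fix $X_i$ and act on the normal line by scalar multiplication by $g$; however, the $S^1$-invariant complex structure on $\nu_i|_p$ selected by the omniorientation and the one transported through $\phi$ may not coincide. By Lemma~\ref{lemm:gmu}(4) they differ at worst by complex conjugation, so modifying $\phi$ factor by factor---replacing $V(\ch^{\alpha_j'})$ by $V(\ch^{\bar\alpha_j'})$ where needed, which replaces $\beta_j'$ by $\bar\beta_j'$---one arranges the two complex structures to agree. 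The uniqueness assertion in Lemma~\ref{lemm:betai} then forces $\beta_i(X) = \beta_{\sigma(i)}'$.

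Setting $\alpha^I_i(X) := \alpha_{\sigma(i)}'$ gives a dual to $\{\beta_i(X)\}_{i \in I}$ by construction, and invertibility in $\MR$ of the corresponding matrix is---by the block-matrix form used in the proof of Lemma~\ref{lemm:dual}---equivalent to invertibility of the real-part block and the integer-part block separately, so $\{b_i(X)\}_{i \in I}$ and $\{v_i(X)\}_{i \in I}$ are bases of $\R^n$ and $\Z^n$ respectively. The complex isomorphism $\nu_i|_p \cong V(\ch^{\alpha^I_i(X)})$ then assembles via \eqref{TpX} to the claimed decomposition of $\tau_p X$ and, through the adjusted $\phi$, to the equivariant diffeomorphism of $O_p$. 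The main obstacle is the complex-structure reconciliation in the third paragraph: the omniorientation prescribes a complex structure on each $\nu_i$ independently of the chart while $\phi$ induces its own, and the factorwise conjugation trick based on Lemma~\ref{lemm:gmu}(4) is what makes the two agree.
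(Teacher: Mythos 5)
Your proof is correct, but it takes a genuinely different route from the paper. The paper argues directly with the one-parameter subgroups $\la_{\beta_i(X)}$: since $\la_{\beta_i(X)}(g)$ acts on $\nu_j|_p$ by scalar multiplication by $g$ when $j=i$ (by \eqref{norm}) and trivially when $j\ne i$ (since $\la_{\beta_i(X)}(\C^*)$ fixes $X_i$, hence $\tau_p X_i=\bigoplus_{j\ne i}\nu_j|_p$, pointwise), the endomorphism $\prod_{i\in I}\la_{\beta_i(X)}$ of $(\C^*)^n$ has trivial kernel, hence is an automorphism; the block-matrix form then immediately forces $\{b_i(X)\}$ and $\{v_i(X)\}$ to be bases of $\R^n$ and $\Z^n$, and Lemma~\ref{lemm:chlalach}(2) identifies the tangential representation. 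You instead start from the chart $\phi$ provided by the definition, extract the $\alpha_j'$ and their duals $\beta_j'$ via Lemma~\ref{lemm:faithful}(1), use Lemma~\ref{lemm:faithful}(2) to match coordinate hyperplanes $V_j'$ with the $X_i$ passing through $p$, reconcile the chart-induced complex structure on $\nu_i|_p$ with the omniorientation-induced one via Lemma~\ref{lemm:gmu}(4) and factorwise conjugation, and then appeal to the uniqueness clause of Lemma~\ref{lemm:betai} to conclude $\beta_i(X)=\beta_{\sigma(i)}'$. Your approach is more explicit about how the chart realizes the claimed structure (and directly yields the ``in particular'' statements about $O_p$ and $O_p\cap X_i$, which the paper leaves implicit), but it needs the complex-structure reconciliation step and the observation that a $\C^*$-subgroup fixing $O_p\cap X_i$ pointwise must fix all of $X_i$ (by connectedness of $X_i$), both of which the paper's shorter argument sidesteps by never leaving the tangent space at $p$.
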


\begin{proof} 
Since the action of $(\C^*)^n$ on $X$ is effective, the endomorphism $\prod_{i\in I}\la_{\beta_i(X)}$ of $(\C^*)^n$ is injective and hence an automorphism.  This implies  that $\{b_i(X)\}_{i\in I}$ and $\{v_i(X)\}_{i\in I}$ are bases of $\R^n$and $\Z^n$ respectively. It follows from \eqref{norm} and Lemma~\ref{lemm:chlalach} (2) that the the right hand side of \eqref{TpX} is isomorphic to $\bigoplus_{i\in I}V(\ch^{\alpha^I_i(X)})$ as complex representations. 
\end{proof}

For $I\in\SiX$ we denote by $\angle b_I(X)$ the cone spanned by $b_i(X)$'s for $i\in I$.  The dimension of $\angle b_I(X)$ is equal to the cardinality of $I$ by Lemma~\ref{lemm:lindep}.  

\begin{lemm} \label{lemm:limit}
Let $\iota$ be a point in the open dense orbit $O$ of $X$.  Let $I\in \Si^{(n)}(X)$ and $p\in X_I$.  Then the following hold. 
\begin{enumerate}
\item If $b\in\angle b_I(X)$, then one can write $b=\sum_{i\in I} r_ib_i(X)$ with $r_i\ge 0$.  In this case $\lim_{g\to 0}\la_{(b,0)}(g)(\iota)$ exists in $O_p\cap X_K$ where $K=\{ i\in I\mid r_i >0\}$.  In particular, if $b$ lies in the interior of $\angle b_I(X)$, then $\lim_{g\to 0}\la_{(b,0)}(g)(\iota)=p$. 
\item If $b\notin\angle b_I(X)$, then the sequence $\la_{(b,0)}(g_\ell)(\iota)$ does not converge in $O_p$ for any sequence $g_\ell$ $(\ell=1,2,\dots)$ in $\C^*$ approaching $0$.
\end{enumerate}
\end{lemm}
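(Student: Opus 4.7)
The plan is to reduce everything to a direct computation in the chart $O_p$. By Lemma~\ref{lemm:lindep} we have an equivariant diffeomorphism $O_p\cong\bigoplus_{i\in I}V(\chi^{\alpha_i^I(X)})$, with $p$ at the origin and $(\C^*)^n$ acting diagonally through the characters $\chi^{\alpha_i^I(X)}$. Since $(\C^*)^n$ is abelian, translating $\iota$ by a group element commutes with the one-parameter subgroup $\la_{(b,0)}$, so without loss of generality I may take $\iota\in O\cap O_p$; then its coordinates $(\iota_i)_{i\in I}$ are all nonzero.

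The first step is to compute the $i$-th coordinate of $\la_{(b,0)}(g)\iota$. By Lemma~\ref{lemm:chlalach}(2) it equals $g^{\langle\alpha_i^I(X),(b,0)\rangle}\iota_i$. Writing $b=\sum_{j\in I}r_jb_j(X)$, which is legitimate and unique because $\{b_j(X)\}_{j\in I}$ is an $\R$-basis by Lemma~\ref{lemm:lindep}, the duality identity $\langle\alpha_i^I(X),\beta_j(X)\rangle=\delta_{ij}\mathbf{1}$ translated through the block matrix description in \eqref{bigmatrix} yields the $\R$-linear relation $\sum_k a_i^k b_j(X)^k=\delta_{ij}$ (where $\alpha_i^I(X)^k=(a_i^k+\sqrt{-1}e_i^k,w_i^k)$). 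Consequently the $b$-part of $\langle\alpha_i^I(X),(b,0)\rangle$ equals $r_i$, and its $v$-part vanishes because $(b,0)$ has vanishing $v$-component. Thus $\langle\alpha_i^I(X),(b,0)\rangle=(r_i+\sqrt{-1}t_i,0)$ for some real $t_i$, and $g^{\langle\alpha_i^I(X),(b,0)\rangle}=|g|^{r_i+\sqrt{-1}t_i}$, of modulus $|g|^{r_i}$.

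The remainder is a modulus analysis as $|g|\to 0$. If $r_i>0$ the $i$-th coordinate has modulus $|g|^{r_i}|\iota_i|\to 0$; if $r_i<0$ the modulus $|g|^{r_i}|\iota_i|$ diverges; if $r_i=0$ the modulus stays equal to $|\iota_i|>0$. For (2): if $b\notin\angle b_I(X)$ then, by uniqueness of the basis expansion, some $r_i$ is strictly negative, so along any sequence $g_\ell\to 0$ the $i$-th coordinate of $\la_{(b,0)}(g_\ell)\iota$ has modulus tending to $+\infty$, ruling out convergence in $O_p$. For (1): if $b\in\angle b_I(X)$ then all $r_i\ge 0$, so the limit (where it exists coordinate-wise) lies in the coordinate subspace $\{z_i=0\text{ for }i\in K\}=O_p\cap X_K$ with $K=\{i:r_i>0\}$; in the interior case all $r_i>0$ forces the limit to be the origin, i.e.\ the point $p$.

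The main obstacle is just the duality computation that identifies the $b$-part of $\langle\alpha_i^I(X),(b,0)\rangle$ with the basis coefficient $r_i$; once this is clean, the modulus analysis reads off both claims, and the contribution of the imaginary part $t_i$ affects only the unit-modulus phase factor and hence not the location of the limit in the coordinate subspace $O_p\cap X_K$.
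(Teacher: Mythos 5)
Your proof takes essentially the same route as the paper's: identify $O_p$ with the representation space $\bigoplus_{i\in I}V(\ch^{\alpha_i^I(X)})$ via Lemma~\ref{lemm:lindep}, compute the $i$-th coordinate of $\la_{(b,0)}(g)(\iota)$ as $g^{\langle\alpha_i^I(X),(b,0)\rangle}\iota_i$, use the duality relation $\langle a_i,b_j(X)\rangle=\delta_{ij}$ to identify the real part of the first factor of the exponent with the coefficient $r_i$, and then read off both claims from the modulus $|g|^{r_i}$. The only cosmetic difference is that the paper normalizes $\iota=(1,\dots,1)$ whereas you keep a general $\iota\in O\cap O_p$; both are legitimate.

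One remark on your closing sentence. You assert that the imaginary part $t_i=\langle e_i,b\rangle$ of the exponent ``affects only the unit-modulus phase factor and hence not the location of the limit.'' That dismissal is a bit too quick: when $r_i=0$ but $t_i\neq 0$, the $i$-th coordinate is $|g|^{\sqrt{-1}t_i}\iota_i=e^{\sqrt{-1}\,t_i\log|g|}\iota_i$, whose phase oscillates without converging as $g\to0$, so the phase term is potentially an obstruction to the \emph{existence} of the limit, not merely a rotation of its location. This matters only when $K\subsetneq I$; in the interior case every $r_i>0$, every modulus goes to $0$, and the issue disappears. The paper's own proof argues purely with moduli via~\eqref{limit1} and~\eqref{limit2} and does not address this point either, so your argument faithfully mirrors the paper's; but the treatment of the phase factor in the boundary case would need more justification (e.g.\ showing $t_i=0$ whenever $r_i=0$, or choosing $b$ suitably) to be fully airtight.
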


\begin{proof}
By Lemma~\ref{lemm:lindep} we may identify $O_p$ with $\bigoplus_{i\in I}V(\ch^{\alpha^I_i(X)})$ where $p$ corresponds to the origin in the representation space.   Since $\iota$ can be taken to be any point in the open dense orbit, we may assume $\iota=(1,\dots,1)$ through the identification.  Therefore, 
\begin{equation} \label{limit1}
\begin{split}
\la_{(b,0)}(g)(\iota)&=(\chi^{\alpha^I_i(X)}(\la_{(b,0)}(g)))_{i\in I}\\
&=(g^{\langle \alpha^I_i(X),(b,0)\rangle})_{i\in I} \qquad\text{in $O_p=\bigoplus_{i\in I}V(\ch^{\alpha^I_i(X)})$}
\end{split}
\end{equation}
and the absolute value of the $i$-component above is given by 
\begin{equation} \label{limit2}
\text{$|\ch^{\alpha^I_i(X)}(\la_{(b,0)}(g))|=|g^{\langle \alpha^I_i(X),(b,0)\rangle}|=|g|^{\langle a_i,b\rangle}$ for $g\in \C^*$}
\end{equation}
where $a_i\in \R^n$ denotes the real part of the first factor of $\alpha^I_i(X)\in \MR^n=\RZRn$. 

(1) If $b\in \angle b_I(X)$, then $\langle a_i,b\rangle=r_i\ge 0$ for any $i\in I$.  Hence $\langle a_i,b\rangle> 0$ for $i\in K$ and $\langle a_i,b\rangle=0$ for $i\in I\backslash K$.  This together with \eqref{limit1} and \eqref{limit2} implies statement (1) in the lemma.  

(2) If $b\notin\angle b_I(X)$, then there is an $i\in I$ such that $\langle a_i,b\rangle\le 0$.  Then a sequence $|\ch^{\alpha_i^I(X)}(\la_{(b,0)}(g_\ell))|=|g_\ell|^{\langle a_i,b\rangle}$ $(\ell=1,2,\dots)$ does not converge for any sequence $g_\ell$ $(\ell=1,2,\dots)$ approaching $0$.  This together with \eqref{limit1} and \eqref{limit2} implies statement (2) in the lemma. 
\end{proof}

\begin{lemm} \label{lemm:XI}
$X_I$ is one point for any $I\in\Si^{(n)}(X)$, more generally, $X_I$  is connected for any $I\in\SiX$. 
\end{lemm}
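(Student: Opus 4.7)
The plan is to prove the two assertions in sequence: first the top-dimensional case $|I|=n$ via uniqueness of limits from Lemma~\ref{lemm:limit}, then the general connectedness by covering $X_I$ with the local charts around its fixed points and reducing to a combinatorial bridging statement among the maximal simplices containing $I$.

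For $I\in\Si^{(n)}(X)$, Lemma~\ref{lemm:lindep} tells us that $\{b_i(X)\}_{i\in I}$ is an $\R$-basis of $\R^n$, so $\angle b_I(X)$ has nonempty interior. Pick $b$ in that interior and any $\iota\in O$. Under the local identification $O_p\cong\bigoplus_{i\in I}V(\chi^{\alpha^I_i(X)})$ from Lemma~\ref{lemm:lindep}, each $X_i\cap O_p$ corresponds to the coordinate hyperplane $\{z_i=0\}$, so $O_p\cap X_I=\{p\}$ for any $p\in X_I$. Lemma~\ref{lemm:limit}(1) then yields $\lim_{g\to 0}\la_{(b,0)}(g)(\iota)=p$. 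Since limits in the Hausdorff space $X$ are unique, $p$ is determined by $b$ and $\iota$ alone, so $X_I$ consists of exactly one point $p_I$.

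For general $I\in\Si(X)$, purity of $\Si(X)$ ensures every $I$ lies in some maximal $J\in\Si^{(n)}(X)$, whence $p_J\in X_I$. In the chart $O_{p_J}\cong\bigoplus_{j\in J}V(\chi^{\alpha^J_j(X)})$ each $X_j$ corresponds to $\{z_j=0\}$, so $U_J:=O_{p_J}\cap X_I$ identifies with the complex vector space $\bigoplus_{j\in J\setminus I}V(\chi^{\alpha^J_j(X)})$; in particular $U_J$ is open in $X_I$, connected, and contains $p_J$. Any $x\in X_I$ lies in some $O_{p_K}$, and $x\in X_i$ for $i\in I$ forces $I\subset K$ via Lemma~\ref{lemm:lindep}, so $x\in U_K$. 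Hence $X_I=\bigcup_{J\supset I,\,J\in\Si^{(n)}(X)}U_J$, a union of connected open pieces, one around each fixed point of $X_I$.

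It then suffices to show that all the $p_J$'s (for $J\supset I$ maximal) lie in a common connected component of $X_I$. I would argue by downward induction on $|I|$: when $J_1\cap J_2\supsetneq I$, the inductive hypothesis applied to $X_{J_1\cap J_2}\subset X_I$ connects $p_{J_1}$ and $p_{J_2}$. The main obstacle—and the hard part of the proof—is the case $J_1\cap J_2=I$; here one must produce a chain of maximal simplices $J_1=K_0,K_1,\dots,K_r=J_2$ (all $\supset I$) with $K_\ell\cap K_{\ell+1}\supsetneq I$, so that the inductive step applies to each consecutive pair. I would obtain such bridging chains by studying the closed orientable $2$-manifolds $X_L$ for $L\supset I$, $|L|=n-1$: each carries an effective smooth $\C^*$-action (the $(\C^*)^n$-action on $X_L$ factors through $\C^*$, with kernel the $(n-1)$-dimensional subgroup $\prod_{i\in L}\la_{\beta_i(X)}(\C^*)$ by Lemma~\ref{lemm:lindep}); each component contains at least one fixed point because $X_L$ is covered by the pieces $U_J$; and the Euler-characteristic identity $\chi(\Sigma)=\#(\text{fixed points})$ for an $S^1$-action on a closed orientable surface forces each component to be an $S^2$ whose two fixed points are $p_{L\cup\{j_1\}}$ and $p_{L\cup\{j_2\}}$ for some distinct $j_1,j_2\notin L$. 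The adjacencies furnished by these spheres, combined with the downward induction, will give the required bridging provided the resulting graph on maximal simplices containing $I$ is connected — a manifold-type pseudomanifold property of $\Si(X)$ which should follow from the local-chart structure of $X$ near the fixed points.
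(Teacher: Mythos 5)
Your treatment of the top-dimensional case $|I|=n$ is correct and is exactly the paper's argument: the limit $\lim_{g\to 0}\la_{(b,0)}(g)(\iota)$ is a single well-defined point of $X$, independent of any choices, and Lemma~\ref{lemm:limit}(1) identifies it with every $p\in X_I$, so $X_I$ is a singleton.

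For general $I$, however, your proposal has a genuine gap that you yourself flag. After correctly covering $X_I$ by the connected open pieces $U_J=O_{p_J}\cap X_I$ (one for each maximal $J\supset I$, each containing the fixed point $p_J$), everything hinges on showing that any two such $J_1,J_2$ are linked by a chain of maximal simplices with consecutive intersections strictly larger than $I$. You reduce this to the assertion that a certain adjacency graph on $\{J\in\Si^{(n)}(X):J\supset I\}$ is connected, and remark that this ``should follow'' from a pseudomanifold property of $\Si(X)$. But that strong-connectivity of the link of $I$ is essentially \emph{equivalent} to the connectedness of $X_I$ you are trying to prove, and it is not established anywhere. Indeed, your surface argument for $|L|=n-1$ only shows that each \emph{component} of $X_L$ is an $S^2$ with two fixed points; it does not rule out $X_L$ having several such components, which is exactly the base case where the bridging chain could fail. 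So the inductive scheme is circular at its base and the proof does not close.

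The paper's proof sidesteps all of this by running the very same limit argument for general $I$. One still takes $b$ in the interior of $\angle b_I(X)$ and sets $x:=\lim_{g\to 0}\la_{(b,0)}(g)(\iota)$, a single point of $X$ defined with no reference to any component. Every component $F$ of $X_I$ contains a fixed point $p=X_{\tilde I}$ with $I\subset\tilde I\in\Si^{(n)}(X)$ (your observation that each component meets the fixed-point set is used here), and Lemma~\ref{lemm:limit}(1) applied with $\tilde I$ and $K=I$ shows $x\in O_p\cap X_I\subset F$. Hence $x$ lies in every component of $X_I$, forcing $X_I$ to be connected. This gives the general case in a few lines and avoids any combinatorial bridging. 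If you want to salvage your route, you would need an independent proof of the strong connectivity of $\Si(X)$ around $I$ that does not presuppose connectedness of the $X_L$'s; the limit argument is precisely the device that supplies that global information.
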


\begin{proof}
We treat the case when $|I|=n$ first.  Let $p$ be any point in $X_I$ and let $b$ be any vector in the interior of $\angle b_I(X)$.  Then  $\lim_{g\to 0}\la_{(b,0)}(g)(\iota)=p$ by Lemma~\ref{lemm:limit} (1), where the left hand side is independent of the choice of $p$.  This shows that $X_I$ is one point.     

A similar idea works for the general case.  By definition, $X$ is covered by $O_p$'s $(p\in X^{(\C^*)^n})$ and each $O_p$ is equivariantly diffeomorphic to a direct sum of complex one-dimensional smooth representation spaces of $(\C^*)^n$, say $V_p$.  Therefore $X_I\cap O_p$ is equivariantly diffeomorphic to an invariant linear subspace of $V_p$ (unless the intersection is empty) and hence each connected component of $X_I$ must have a fixed point.  

Let $F$ be any connected component of $X_I$ and let $p$ be a fixed point in $F$.  Then $p=X_{\tilde I}$ for some $\tilde I\in \Si^{(n)}(X)$ with $I\subset \tilde I$.  Let $b$ be an element in the interior of $\angle b_I(X)$.  Then Lemma~\ref{lemm:limit} (1) applied with $\tilde I$ as $I$ and $I$ as $K$ says that  $\lim_{g\to 0}\la_{(b,0)}(g)(\iota)=:x$ exists in $O_p\cap X_I$, in fact, in $O_p\cap F$ because $O_p\cap X_I$ is connected and $p\in F$.  Therefore $x\in F$.  However the limit point $x$ is independent of the choice of the fixed point $p$ and hence $x$ is contained in every connected component of $X_I$.  This means that $X_I$ is connected.            
\end{proof}

\begin{lemm}
$\bigcup_{I\in\SiX}\angle b_I(X)=\R^n$ and $\angle b_I(X)\cap \angle b_J(X)=\angle b_{I\cap J}(X)$ for any $I,J\in \SiX$.  
\end{lemm}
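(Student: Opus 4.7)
The plan is to combine Lemma~\ref{lemm:limit} with compactness of $X$, the uniqueness of basis expansions afforded by Lemma~\ref{lemm:lindep}, and the purity of $\Si(X)$. The covering statement $\bigcup_I\angle b_I(X)=\R^n$ will be obtained by tracking where the one-parameter orbit $\la_{(b,0)}(g)(\iota)$ accumulates as $g\to 0$; the fan axiom $\angle b_I(X)\cap\angle b_J(X)=\angle b_{I\cap J}(X)$ will be proved by matching the limit data of Lemma~\ref{lemm:limit}(1) computed in two different local charts.

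For the first identity, fix $b\in\R^n$ and any sequence $g_\ell\to 0$ in $\C^*$. By compactness of $X$ a subsequence of $\la_{(b,0)}(g_\ell)(\iota)$ converges to some $y\in X$, and since the open sets $\{O_p\}_{p\in X^{(\C^*)^n}}$ cover $X$ we may choose a fixed point $p$ with $y\in O_p$. Picking $I\in\Si^{(n)}(X)$ with $p\in X_I$, the contrapositive of Lemma~\ref{lemm:limit}(2) forces $b\in\angle b_I(X)$.

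For the second identity the inclusion $\angle b_{I\cap J}(X)\subset \angle b_I(X)\cap \angle b_J(X)$ is immediate. For the reverse, take $b\in\angle b_I(X)\cap\angle b_J(X)$ and write
\[
b=\sum_{i\in I}r_ib_i(X)=\sum_{j\in J}s_jb_j(X),\qquad r_i,s_j\ge 0.
\]
Set $K_I:=\{i\in I: r_i>0\}$ and $K_J:=\{j\in J: s_j>0\}$. Using purity of $\Si(X)$, extend to $\tilde I,\tilde J\in\Si^{(n)}(X)$ with $I\subset\tilde I$ and $J\subset\tilde J$, and apply Lemma~\ref{lemm:limit}(1) in the charts $O_p$ with $p=X_{\tilde I}$ and with $p=X_{\tilde J}$ respectively. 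Since $y:=\lim_{g\to 0}\la_{(b,0)}(g)(\iota)$ is a single point of $X$, it must lie in $X_{K_I}\cap X_{K_J}=X_{K_I\cup K_J}$, and in particular $K_I\cup K_J\in\Si(X)$. Extend once more to $\hat K\in\Si^{(n)}(X)$; by Lemma~\ref{lemm:lindep}, $\{b_k(X)\}_{k\in\hat K}$ is an $\R$-basis of $\R^n$. Uniqueness of the expansion of $b$ in this basis forces the two coefficient vectors (strictly positive on $K_I$ and zero on $\hat K\setminus K_I$, respectively strictly positive on $K_J$ and zero on $\hat K\setminus K_J$) to coincide, whence $K_I=K_J=:K$. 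Since $K\subset I\cap J$, we obtain $b\in\angle b_K(X)\subset\angle b_{I\cap J}(X)$.

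The main technical point is the chart-crossing in the second identity: although Lemma~\ref{lemm:limit}(1) is phrased locally in a single chart $O_p$, one needs to know that the limit assembles into a single well-defined point of $X$ so that it lies simultaneously in $X_{K_I}$ and $X_{K_J}$. This is granted by Lemma~\ref{lemm:limit}(1); once in hand, the rest is a clean linear-algebra comparison in the basis $\{b_k(X)\}_{k\in\hat K}$.
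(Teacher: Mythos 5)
Your proof is correct, and it does diverge somewhat from the paper's in the second half, so a brief comparison is warranted.

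For the covering identity $\bigcup_{I}\angle b_I(X)=\R^n$ you run the contrapositive of the paper's proof-by-contradiction: fix $b$, extract a convergent subsequence of $\la_{(b,0)}(g_\ell)(\iota)$ by compactness, locate the limit in some $O_p$ with $p=X_I$, and conclude $b\in\angle b_I(X)$ from Lemma~\ref{lemm:limit}(2). This is the same argument in slightly different packaging.

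For the fan axiom $\angle b_I(X)\cap\angle b_J(X)=\angle b_{I\cap J}(X)$ you take a genuinely different decomposition. The paper first establishes the inclusion when $I,J\in\Si^{(n)}(X)$ by a geometric contradiction: if it fails, one finds $i\in I\setminus J$ with $r_i>0$ and $j\in J\setminus I$ with $s_j>0$, so the limit lies in $O_p\cap X_i$ and in $O_q\cap X_j$, forcing $O_p\cap X_j\ne\emptyset$, contradicting Lemma~\ref{lemm:lindep}. It then reduces general $I,J\in\SiX$ to the top-dimensional case by pure linear algebra on the $\tilde I$-expansion. You instead treat the general case in a single pass: Hausdorffness of $X$ identifies the two local limits from Lemma~\ref{lemm:limit}(1) (computed in $O_{X_{\tilde I}}$ and $O_{X_{\tilde J}}$) as one point $y\in X_{K_I}\cap X_{K_J}=X_{K_I\cup K_J}$, whence $K_I\cup K_J\in\SiX$; extending $K_I\cup K_J$ to $\hat K\in\Si^{(n)}(X)$ and comparing the two expansions of $b$ in the basis $\{b_k(X)\}_{k\in\hat K}$ forces $K_I=K_J\subset I\cap J$. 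Your version avoids the paper's two-step reduction and replaces the ``$O_p\cap X_j=\emptyset$'' argument by the cleaner observation that nonemptiness of $X_{K_I\cup K_J}$ directly puts $K_I\cup K_J$ in $\SiX$. Both routes rest on the same pillars — Lemma~\ref{lemm:limit}, purity of $\SiX$, and the basis property from Lemma~\ref{lemm:lindep} — so the gain is in economy of organization rather than in using different mathematics.
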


\begin{proof}
Suppose $\bigcup_{I\in\SiX}\angle b_I(X)\not=\R^n$ and let $b\in\R^n\backslash \bigcup_{I\in\SiX}\angle b_I(X)$.  Since $b\notin \angle b_I(X)$ for any $I\in\Si^{(n)}(X)$ and $X=\bigcup_{p\in X^{(\C^*)^n}}O_p$, it follows from Lemma~\ref{lemm:limit} (2) that $\la_{(b,0)}(g_\ell)(\iota)$ does not converge in $X$ for any sequence $g_\ell$ in $\C^*$ $(\ell=1,2,\dots)$ approaching $0$.  This contradicts the compactness of $X$.  Therefore the former statement in the lemma is proven. 

As for the latter statement in the lemma, it suffices to prove 
\begin{equation} \label{subset}
\angle b_I(X)\cap \angle b_J(X)\subset \angle b_{I\cap J}(X)
\end{equation}
because the opposite inclusion is obvious.  We shall prove \eqref{subset} when $I,J\in\Si^{(n)}(X)$ first.  Suppose that \eqref{subset} does not hold for those $I$ and $J$.  Then there is an element $b\in \angle b_I(X)\cap \angle b_J(X)$ which is not contained in $\angle b_{I\cap J}(X)$.  This means that when we write $b=\sum_{i\in I}r_i b_i(X)$ with $r_i\ge 0$, then there is $i\in I\backslash J$ with $r_i>0$. Similarly, when we write $b=\sum_{j\in J}s_j b_j(X)$ with $s_j\ge 0$, then there is $j\in J\backslash I$ with $s_j>0$.  It follows from Lemma~\ref{lemm:limit} (1) that $\lim_{g\to 0}\la_{(b,0)}(g)(\iota)$ lies in $O_p\cap X_i$ as well as in $O_q\cap X_j$ where $p=X_I$ and $q=X_J$.  In particular $O_p\cap X_i\cap X_j\not=\emptyset$.  However, since $j\notin I$, we have $O_p\cap X_j=\emptyset$ by Lemma~\ref{lemm:lindep}.  This is a contradiction and hence \eqref{subset} holds for $I,J\in\Si^{(n)}(X)$.   

Suppose that $I,J\in \SiX$ may not be in $\Si^{(n)}(X)$.  Then there are $\tilde I, \tilde J\in \Si^{(n)}(X)$ such that $I\subset \tilde I$ and $J\subset \tilde J$ since $\SiX$ is pure.  Let $b\in\angle b_I(X)\cap \angle b_J(X)$.  Since $\angle b_I(X)\subset \angle b_{\tilde I}(X)$, $\angle b_J(X)\subset \angle b_{\tilde J}(X)$ and $\angle b_{\tilde I}(X)\cap \angle b_{\tilde J}(X)=\angle b_{\tilde I \cap\tilde J}(X)$ as proved above, $b$ sits in $\angle b_{\tilde I\cap \tilde J}(X)$. 
Therefore 
\begin{equation} \label{b}
\text{$b=\sum_{k\in \tilde I\cap \tilde J}r_kb_k(X)$\quad  with $r_k\ge 0$.}
\end{equation}
Since $b\in \angle b_I\subset \angle b_{\tilde I}$ and the expression of $b$ as a linear combination of $b_i$'s for $i\in \tilde I$ is unique, $b$ must be a linear combination of $b_i$'s for $i\in I$.  On the other hand, we have the expression \eqref{b} and $\tilde I\cap \tilde J\subset \tilde I$.  It follows that $\{ k\in \tilde I\cap\tilde J\mid r_k\not=0\} \subset I$.  Similarly $\{ k\in \tilde I\cap\tilde J\mid r_k\not=0\} \subset J$.   Therefore 
\[
\{ k\in \tilde I\cap\tilde J\mid r_k\not=0\} \subset I \cap J.
\]
This together with \eqref{b} shows that $b\in \angle b_{I\cap J}(X)$, proving \eqref{subset}.   
\end{proof}

Motivated by the observations above, we make the following definition. 

\begin{defi}
Let $\Si$ be an abstract finite simplicial complex of dimension $n-1$ (with the empty set $\emptyset$ added)  and let 
\[
\beta\colon \Si^{(1)} \to \MR^n=\RZRn
\]
where $\Si^{(1)}$ denotes the vertex set of $\Si$.  We abbreviate an element $\{i\}\in\Si^{(1)}$ as $i$ and $\beta(\{i\})$ as $\beta_i$ and express $\beta_i=(b_i+\sqrt{-1}c_i,v_i)\in\RZRn$.  Then the pair $(\Si,\beta)$ is called a \emph{(simplicial) topological fan} of dimension $n$ if the following are satisfied.
\begin{enumerate}
\item  $b_i$'s for $i\in I$ are linearly independent whenever $I\in \Si$, and $\angle \b_I\cap \angle \b_J=\angle \b_{I\cap J}$ for any $I,J\in \Si$.
(In short, the collection of cones $\angle b_I$ for all $I\in \Si$ is an ordinary simplicial fan although $\b_i$'s are not necessarily in $\Z^n$.) 
\item Each $\v_i$ is primitive and $v_i$'s for $i\in I$ are linearly independent (over $\R$) whenever $I\in \Si$.  
\end{enumerate} 
We say that a topological fan $\De$ of dimension $n$ is \emph{complete} if $\bigcup_{I\in \Si}\angle \b_I=\R^n$ and \emph{non-singular} if the $\v_i$'s for $i\in I$ form a part of a $\Z$-basis of $\Z^n$ whenever $I\in\Si$.  
\end{defi}

\begin{rema}
When $\b_i=\v_i$ and $c_i=0$ for every $i$, $\De$ can be thought of as an ordinary simplicial fan. The notion of completeness and non-singularity above generalizes that for an ordinary simplicial fan.    
\end{rema}

\begin{defi}
Let $\De=(\Si,\beta)$ and $\De'=(\Si',\beta')$ be topological fans.  
\begin{enumerate}
\item $\De$ and $\De'$ are \emph{equivalent}, denoted  $[\De]=[\De']$, if there is an isomorphism $\sigma\colon \Si\to \Si'$ such that $\beta'_{\sigma(i)}=\beta_i$ for any $i\in\Si^{(1)}$.  
\item $\De$ and $\De'$ are \emph{D-equivalent}, denoted $[\De]_D=[\De']_D,$ if there is an isomorphism $\sigma\colon \Si\to \Si'$ such that $\beta'_{\sigma(i)}=\beta_i$ or ${\beta_i}\mu_0$ for any $i\in\Si^{(1)}$, where $\mu_0$ is the element in Lemma~\ref{lemm:repclass}.  
\item $\De$ and $\De'$ are \emph{H-equivalent}, denoted $[\De]_H=[\De']_H$, if there are an isomorphism $\sigma\colon \Si\to \Si'$  and $\mu_i\in \MS$ for each $i\in\Si^{(1)}$ such that $\beta'_{\sigma(i)}=\beta_i\mu_i$ for any $i\in\Si^{(1)}$, where $\MS$ is the subset of $\MR$ in Lemma~\ref{lemm:repclass}. 
\end{enumerate}
\end{defi}

\begin{rema}
One can form a collection of cones in $\MR^n=\RZRn$ from a topological fan.  Then two topological fans $\De$ and $\De'$ are equivalent if and only if the collection of cones derived from $\De$ agrees with that from $\De'$. 

In toric geometry, a morphism between two fans is defined to be an isomorphism of ambient spaces taking one fan to the other. 
One can define morphisms between topological fans similarly.
\end{rema}

For a topological toric manifold $X$ of dimension $2n$, we associated the simplicial complex $\SiX$ of dimension $n-1$ and deduced the map $\beta(X)\colon \Si^{(1)}(X)\to \MR^n$. The observations above show the following.

\begin{lemm}
$\De(X):=(\SiX,\beta(X))$ is a complete non-singular topological fan of dimension $n$. 
\end{lemm}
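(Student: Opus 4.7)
The plan is to observe that this lemma is essentially a collation of the preceding results of the section, and to verify the four defining conditions (simplicial complex of the right dimension, the two fan axioms, completeness, and non-singularity) one by one.

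To begin, I would note that $\SiX$ has already been introduced as a pure abstract finite simplicial complex of dimension $n-1$, and $\beta(X)\colon \Si^{(1)}(X)\to \MR^n$ has been constructed in Lemma~\ref{lemm:betai}, so only the two numbered axioms together with completeness and non-singularity require checking. For axiom (1), linear independence of $\{b_i(X)\}_{i\in I}$: by purity of $\SiX$, every $I\in\SiX$ is contained in some $\tilde I\in\Si^{(n)}(X)$; Lemma~\ref{lemm:lindep} then says that $\{b_i(X)\}_{i\in\tilde I}$ is an $\R$-basis of $\R^n$, and hence the subset $\{b_i(X)\}_{i\in I}$ is linearly independent. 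The cone intersection identity $\angle b_I(X)\cap \angle b_J(X)=\angle b_{I\cap J}(X)$ is precisely what was established in the lemma immediately preceding this one.

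For axiom (2), exactly the same purity trick works for $\{v_i(X)\}_{i\in I}$: each $i\in \Si^{(1)}(X)$ belongs to some $I\in\Si^{(n)}(X)$, for which Lemma~\ref{lemm:lindep} yields that $\{v_j(X)\}_{j\in I}$ is a $\Z$-basis of $\Z^n$. This simultaneously gives both primitivity of each $v_i(X)$ (being a member of a $\Z$-basis) and linear independence over $\R$ of $\{v_i(X)\}_{i\in I}$ for any simplex $I$ (being a subset of such a basis). Completeness, $\bigcup_{I\in\SiX}\angle b_I(X)=\R^n$, is the first assertion of the preceding lemma. Non-singularity is also immediate: for any $I\in \SiX$, pick $\tilde I\in \Si^{(n)}(X)$ containing $I$; Lemma~\ref{lemm:lindep} exhibits $\{v_j(X)\}_{j\in\tilde I}$ as a $\Z$-basis of $\Z^n$, so the subset indexed by $I$ is part of a $\Z$-basis.

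There is essentially no genuine obstacle at this stage; all the hard work lies upstream. The two points where the technical difficulty really resided were (a) establishing that the local model at a fixed point is indeed a direct sum determined by a dual pair $(\alpha^I_i(X),\beta_i(X))$ for each $I\in\Si^{(n)}(X)$ (accomplished via Lemmas~\ref{lemm:betai} and~\ref{lemm:lindep}, in turn relying on the connectedness of $X_i$ furnished by Lemma~\ref{lemm:XI}), and (b) the combinatorial matching of cones $\angle b_I(X)$ across different top-dimensional simplices together with global coverage of $\R^n$ (accomplished by the preceding lemma using compactness of $X$ and Lemma~\ref{lemm:limit}). Once these are in place, the proof reduces to quoting them in sequence.
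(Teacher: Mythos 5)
Your proof is correct and takes the same approach the paper intends: the paper gives no explicit argument, simply stating that "the observations above show the following," and your write-up is exactly the intended collation of Lemma~\ref{lemm:lindep} (for axiom (2), primitivity, and non-singularity), the purity of $\SiX$, and the lemma immediately preceding (for axiom (1)'s intersection property and for completeness).
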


We gave $X$ the orientation induced from the canonical orientation on the open dense orbit.  However, there is no canonical choice of orientations on the characteristic submanifolds of $X$.  We say that two omnioriented topological toric manifolds are \emph{isomorphic} if there is an equivariant diffeomorphism between them preserving the omniorientations.  An equivariant diffeomorphism or equivariant homeomorphism between omnioriented topological toric manifolds preserves their characteristic submanifolds but does not necessarily preserve the orientations on their characteristic submanifolds.  The following lemma follows from Lemma~\ref{lemm:repclass}. 

\begin{lemm} \label{lemm:iso}
If omnioriented topological toric manifolds $X$ and $Y$ are isomorphic {\rm (}resp. equivariantly diffeomorphic or equivariantly homeomorphic{\rm)}, then $[\De(X)]=[\De(Y)]$ {\rm (}resp. $[\De(X)]_D=[\De(Y)]_D$ or $[\De(X)]_H=[\De(Y)]_H${\rm)}. 
\end{lemm}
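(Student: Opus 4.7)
The plan is to extract the simplicial isomorphism $\sigma$ from $f$ and then reduce the global matching of the $\beta_i$'s to the local classification of faithful representations given by Lemma~\ref{lemm:repclass}. First, I would show that any equivariant homeomorphism $f\colon X\to Y$ induces a bijection between the sets of characteristic submanifolds: a characteristic submanifold is intrinsically described as a maximal connected codimension-two submanifold pointwise fixed by some $\C^*$-subgroup of $(\C^*)^n$, and this property is preserved by any equivariant map. Setting $f(X_i)=Y_{\sigma(i)}$ defines a bijection $\sigma\colon [m]\to [m']$, and since $f$ is a homeomorphism it preserves intersections, so $\sigma$ extends to a simplicial isomorphism $\SiX\to \Si(Y)$.

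Next, I would analyze $f$ near a single fixed point. Fix $I\in \Si^{(n)}(X)$ with unique fixed point $p\in X_I$; then $f(p)$ is the unique fixed point in $Y_{\sigma(I)}$. By Lemma~\ref{lemm:lindep} the invariant neighborhoods $O_p$ and $O_{f(p)}$ are equivariantly diffeomorphic to faithful complex $n$-dimensional $(\C^*)^n$-representation spaces associated respectively with $\{\beta_i(X)\}_{i\in I}$ and $\{\beta_{\sigma(i)}(Y)\}_{i\in I}$. Restricting $f$ to these neighborhoods produces a local equivariant diffeomorphism (respectively homeomorphism) between these representation spaces, and Lemma~\ref{lemm:repclass} applies directly: part (1) yields $\beta_{\sigma(i)}(Y)=\beta_i(X)$ or $\beta_i(X)\mu_0$ for every $i\in I$ in the diffeomorphism case, and part (2) yields $\beta_{\sigma(i)}(Y)=\beta_i(X)\mu_i$ for some $\mu_i\in \MS$ in the homeomorphism case. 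Since $\SiX$ is pure, letting $I$ range over $\Si^{(n)}(X)$ covers every vertex and supplies the relation for every $i\in \Si^{(1)}(X)$.

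For the isomorphism case I would argue separately that the $\mu_0$-twist cannot arise. Because $f$ preserves the orientation of $X$ and of each $X_i$, the differential $df$ preserves the induced orientation on every normal bundle $\nu_i$; together with the uniqueness statement of Lemma~\ref{lemm:betai}, this means $df$ is \emph{complex-linear} with respect to the preferred complex structures on $\nu_i$ and $\nu_{\sigma(i)}$. Comparing the scalar actions of the subgroups $\la_{\beta_i(X)}(\C^*)=\la_{\beta_{\sigma(i)}(Y)}(\C^*)$ on both sides then forces $\beta_{\sigma(i)}(Y)=\beta_i(X)$, giving $[\De(X)]=[\De(Y)]$.

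The main subtlety to verify is the global consistency of these locally extracted identities across different maximal simplices: I need to be sure that the permutation $\sigma$ and the twist elements $\mu_i$ extracted at one fixed point $p\in X_I$ are the same as those extracted at any other fixed point $p'\in X_{I'}$ with $i\in I\cap I'$. This, however, is automatic from the definitions, since both $\beta_i(X)$ and $\beta_{\sigma(i)}(Y)$ are global invariants of $X_i$ and $Y_{\sigma(i)}$: the $\C^*$-subgroup pointwise fixing $X_i$ is intrinsic to $X_i$, and the normalization in Lemma~\ref{lemm:betai} is made uniformly along all of $X_i$. Hence any single $p\in X_I$ pins down the relation for each $i\in I$, and the three equivalences $[\De(X)]=[\De(Y)]$, $[\De(X)]_D=[\De(Y)]_D$, and $[\De(X)]_H=[\De(Y)]_H$ follow respectively from the three cases.
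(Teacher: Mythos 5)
Your proposal is correct and matches the paper's approach. The paper simply states that the lemma "follows from Lemma~\ref{lemm:repclass}," so your argument supplies the implicit details in the intended way: extract the simplicial isomorphism $\sigma$ from the equivariant map's action on characteristic submanifolds, restrict to a chart around each fixed point (noting, as you can check, that an equivariant map in fact sends $O_p$ onto $O_{f(p)}$, so Lemma~\ref{lemm:repclass} really does apply directly), and invoke the uniqueness in Lemma~\ref{lemm:betai} to obtain the global consistency. Your separate orientation argument for the isomorphism case — ruling out the $\mu_0$-twist because it would force $df$ to be anti-complex-linear on $\nu_i$, contradicting preservation of the omniorientation — is exactly the right supplement, since Lemma~\ref{lemm:repclass} by itself only yields the $D$-equivalence.
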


\section{Construction of a topological toric manifold $X(\De)$} \label{sect:4}

In Section~\ref{sect:3}, we associated a complete non-singular topological fan $\De(X)$ to an omnioriented topological toric manifold $X$.  In this section and the subsequent two sections, we will define a reverse correspondence, namely, we associate an omnioriented topological toric manifold $X(\De)$ to a complete non-singular topological fan $\De$.  

In toric geometry, there are two ways to associate a toric manifold with a complete non-singular fan: one is the gluing construction described in \cite{ewal96}, \cite{fult93} or \cite{oda88} and the other is the quotient construction where many people are involved in the development, see \cite{cox95} and \cite{cox97}.  Although both constructions work in our setting, we adopt the quotient construction since the quotient construction seems easier to understand.  

Let $\De=(\Si,\beta)$ be a complete non-singular topological fan of dimension $n$ where we take the vertex set $\Si^{(1)}$ as $[m]=\{1,2,\dots,m\}$.  For $I\subset [m]$, we set 
\begin{equation} \label{U(I)}
U(I):=\{(z_1,\dots,z_m)\in \C^m\mid z_i\not=0 \ \text{for $\forall i\notin I$}\}.
\end{equation}
Note that $U(I)\cap U(J)=U(I\cap J)$ for any $I,J\in [m]$ and $U(I)\subset U(J)$ if and only if $I\subset J$.  We define 
\[
U(\Sigma):=\bigcup_{I\in \Si}U(I).
\]

\begin{rema}
It is known (and easy to see) that $U(\Si)$ is the complement of the union of coordinate subspaces 
\[
Z:=\bigcup_{J\notin \Si}\{(z_1,\dots,z_m)\in \C^m\mid z_j=0 \ \text{for $\forall j\in J$}\},
\]
that is, $U(\Si):=\C^m\backslash Z$. 
\end{rema}

Let 
\[
\la \colon (\C^*)^m\to (\C^*)^n
\]
be the homomorphism defined by
\begin{equation} \label{defl}
\la(h_1,\dots,h_m):=\prod_{k=1}^m\la_{\beta_k}(h_k).
\end{equation}

\begin{lemm} \label{lemm:kerl}
$\la$ is surjective and  
\begin{equation} \label{kerl}
\Ker\la=\{(h_1,\dots,h_m)\in (\C^*)^m
\mid \prod_{k=1}^mh_k^{\langle \alpha,\beta_k\rangle}=1\quad \text{for any $\alpha\in \MR^n$}\}.
\end{equation}
Using $\{\alpha^I_i\}_{i\in I}$ for $I\in \Si^{(n)}$, which is dual to $\{\beta_i\}_{i\in I}$, we have  
\begin{equation} \label{aibk}
\Ker\la=\{(h_1,\dots,h_m)\in (\C^*)^m \mid h_i\prod_{k\notin I}h_k^{\langle \alpha^I_i,\beta_k\rangle}=1\quad \text{for any $i\in I$}\}.
\end{equation}
\end{lemm}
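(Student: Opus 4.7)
The plan is to let Lemmas~\ref{lemm:chlalach}, \ref{lemm:comp} and~\ref{lemm:dual} do essentially all of the work; each assertion reduces to a direct computation once the dual set $\{\alpha_i^I\}_{i\in I}$ is in hand. To that end I would first fix any $I\in\Si^{(n)}$, which is nonempty because $\De$ is complete of dimension $n$. Non-singularity says the $v_i$'s $(i\in I)$ form part of a $\Z$-basis of $\Z^n$, hence a basis since $|I|=n$; the fan condition says the $b_i$'s $(i\in I)$ are linearly independent in $\R^n$, hence a basis. Lemma~\ref{lemm:dual} then produces the dual set $\{\alpha_i^I\}_{i\in I}\subset \MR^n$ with $\langle \alpha_i^I,\beta_j\rangle=\delta_{ij}\1$ for $j\in I$.

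For surjectivity, Lemma~\ref{lemm:comp} gives that $\prod_{i\in I}\la_{\beta_i}\colon (\C^*)^n\to (\C^*)^n$ is an automorphism (it has inverse $\bigoplus_{i\in I}\ch^{\alpha_i^I}$). Since this map factors through $\la$ via the inclusion $(h_i)_{i\in I}\mapsto (h_1,\dots,h_m)$ with $h_k=1$ for $k\notin I$, the surjectivity of $\la$ follows immediately.

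For the first description of $\Ker\la$ (the characterization over all $\alpha\in\MR^n$), I would invoke Lemma~\ref{lemm:chlalach}(1): an element $g\in(\C^*)^n$ is trivial iff $\ch^\alpha(g)=1$ for every $\alpha\in\MR^n$. Combined with the definition \eqref{defl} of $\la$ and Lemma~\ref{lemm:chlalach}(2), one computes
\[
\ch^\alpha(\la(h_1,\dots,h_m))=\prod_{k=1}^m\ch^\alpha(\la_{\beta_k}(h_k))=\prod_{k=1}^m h_k^{\langle\alpha,\beta_k\rangle},
\]
which yields \eqref{kerl}.

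For the second description (with $\alpha_i^I$), I would \emph{not} re-prove it from scratch but deduce it from \eqref{kerl}. The key observation is that, by Lemma~\ref{lemm:comp}, the endomorphism $\bigoplus_{i\in I}\ch^{\alpha_i^I}$ is an automorphism of $(\C^*)^n$, so $\la(h_1,\dots,h_m)=1$ is equivalent to $\ch^{\alpha_i^I}(\la(h_1,\dots,h_m))=1$ for every $i\in I$. Using duality, $\langle \alpha_i^I,\beta_j\rangle=\delta_{ij}\1$ for $j\in I$, the contribution of the indices in $I$ to $\prod_k h_k^{\langle\alpha_i^I,\beta_k\rangle}$ collapses to $h_i$ (since $h_k^{\0}=1$ and $h_i^{\1}=h_i$), leaving exactly the relation $h_i\prod_{k\notin I}h_k^{\langle\alpha_i^I,\beta_k\rangle}=1$; this proves \eqref{aibk}. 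I do not anticipate any genuine obstacle: the only point requiring mild care is verifying that Lemma~\ref{lemm:dual} applies, which is precisely where non-singularity of $\De$ and the fan condition on $I\in\Si^{(n)}$ enter.
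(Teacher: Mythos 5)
Your proof is correct and follows essentially the same route as the paper: both rely on Lemma~\ref{lemm:chlalach}(1) for \eqref{kerl} and on Lemma~\ref{lemm:comp} (that $\bigoplus_{i\in I}\ch^{\alpha^I_i}$ is an automorphism) for \eqref{aibk}. In fact you add a step the paper silently omits — an explicit argument for the surjectivity of $\la$ via the factorization $\prod_{i\in I}\la_{\beta_i}=\la\circ\iota$ — which is a worthwhile improvement.
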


\begin{proof}
It follows from Lemma~\ref{lemm:chlalach} (1) that $\la(h_1,\dots,h_m)=\prod_{k=1}^m\la_{\beta_k}(h_k)$ is trivial if and only if  
\begin{equation*} \label{abuv}
\ch^{\alpha}\big(\prod_{k=1}^m\la_{\beta_k}(h_k)\big)=\prod_{k=1}^mh_k^{\langle \alpha,\beta_k\rangle}=1\quad\text{for any $\alpha\in \MR^n$},
\end{equation*}
proving \eqref{kerl}.  
As for \eqref{aibk}, since $\bigoplus_{i\in I}\chi^{\alpha^I_i}$ is an automorphism of $(\C^*)^n$ by Lemma~\ref{lemm:comp}, $\la(h_1,\dots,h_m)=\prod_{k=1}^m\la_{\beta_k}(h_k)$ is trivial if and only if  
\[
\ch^{\alpha^I_i}\big(\prod_{k=1}^m\la_{\beta_k}(h_k)\big)=\prod_{k=1}^mh_k^{\langle \alpha^I_i,\beta_k\rangle}=
h_i\prod_{k\notin I}h_k^{\langle \alpha^I_i,\beta_k\rangle}=1\quad\text{for any $i\in I$},
\]
proving \eqref{aibk}.  
\end{proof}

We define 
\[
X(\De):=U(\Sigma)/\Ker\la=\bigcup_{I\in\Si}U(I)/\Ker\la.
\]
Since the natural action of $(\C^*)^m$ on $\C^m$ leaves the subset $U(\Si)$ of $\C^m$ invariant, it induces an effective action of $(\C^*)^m/\Ker\la$ on $X(\De)$ having an open dense orbit and only finitely many orbits.  Since $\la$ is surjective, $(\C^*)^m/\Ker\la$ can be identified with $(\C^*)^n$ via $\la$, so we think of $X(\De)$ as a topological space with this action of $(\C^*)^n$.  In the next two sections, we will prove that $X(\De)$ is actually a topological toric manifold.

\section{Local properties of $X(\De)$} \label{sect:5}

For $I\in\Si^{(n)}$, we denote by $\C^I$ the affine space $\C^n$ with coordinates indexed by the elements in $I$ and define a continuous map 
\[
\tilde\varphi_I\colon U(I)\to \C^I
\]
by 
\begin{equation} \label{phiI}
\tilde\varphi_I(z_1,\dots,z_m):=(\prod_{k=1}^mz_k^{\langle \alpha^I_i,\beta_k\rangle})_{i\in I}=(z_i\prod_{k\notin I}z_k^{\langle \alpha^I_i,\beta_k\rangle})_{i\in I}
\end{equation}
where $z_k\not=0$ for $k\notin I$ by the definition of $U(I)$. 

\medskip
\noindent
{\bf Claim 1.} $\tilde\varphi_I$ is invariant under the action of $\Ker\la$.

\begin{proof}
Let $(h_1,\dots,h_m)\in\Ker\la$.  Then it follows from \eqref{phiI} and \eqref{kerl} that 
\[
\begin{split}
\tilde\varphi_I(h_1z_1,\dots,h_mz_m)&=\big(\prod_{k=1}^m(h_kz_k)^{\langle \alpha^I_i,\beta_k\rangle}\big)_{i\in I}=\big(\prod_{k=1}^mh_k^{\langle \alpha^I_i,\beta_k\rangle}\prod_{k=1}^mz_k^{\langle \alpha^I_i,\beta_k\rangle}\big)_{i\in I}\\
&=\big(\prod_{k=1}^mz_k^{\langle \alpha^I_i,\beta_k\rangle}\big)_{i\in I}=\tilde\varphi_I(z_1,\dots,z_m), 
\end{split}
\]
proving the claim. 
\end{proof}

By Claim 1 above, $\tilde\varphi_I$ induces a continuous map 
$$\varphi_I\colon U(I)/\Ker\la\to\C^I.$$

\medskip
\noindent
{\bf Claim 2.} $\varphi_I$ is a homeomorphism. 

\begin{proof}
By the definition of $U(I)$, $z_k\not=0$ for $k\notin I$ and $z_i$ for $i\in I$ is an arbitrary complex number, so $\varphi_I$ is surjective.  

The injectivity of $\varphi_I$ is as follows.  Suppose that 
\[
\tilde\varphi_I(z_1,\dots,z_m)=\tilde\varphi_I(\zeta_1,\dots,\zeta_m)
\]
for $(z_1,\dots,z_m), (\zeta_1,\dots,\zeta_m)\in U(I)$.  Then it follows from \eqref{phiI} that 
\begin{equation} \label{ziwi}
z_i\prod_{k\notin I}z_k^{\langle \alpha^I_i,\beta_k\rangle}
=\zeta_i\prod_{k\notin I}\zeta_k^{\langle \alpha^I_i,\beta_k\rangle}
\end{equation}
for each $i\in I$.  We define an element $(h_1,\dots,h_m)\in (\C^*)^m$ by 
\begin{equation} \label{hkhi}
h_k:=z_k/\zeta_k\ \text{for $k\notin I$}\quad\text{and}\quad h_i:=\prod_{k\notin I}h_k^{-\langle \alpha^I_i,\beta_k\rangle}\ \text{for $i\in I$}.
\end{equation}
Then $(h_1,\dots,h_m)\in \Ker\la$ by \eqref{aibk} and $(z_1,\dots,z_m)=(h_1\zeta_1,\dots,h_m\zeta_m)$ by \eqref{ziwi} and \eqref{hkhi}, proving the injectivity of $\varphi_I$. 

Finally, if $\tilde f_I$ is a map sending $(w_i)_{i\in I}\in \C^I$ to $(z_1,\dots,z_m)\in U(I)$ where $z_i=w_i$ for $i\in I$ and $z_k=1$ for $k\notin I$, then  the composition $\tilde\varphi_I\circ \tilde f_I$ is the identity on $\C^I$ by \eqref{phiI}.  Since $\tilde f_I$ is continuous, this shows that $\varphi_I^{-1}$ is also continuous. 

This completes the proof of the claim.  
\end{proof}

Remember that $X(\De)$ has an action of $(\C^*)^n$.  We shall observe this action through the local chart $\varphi_I\colon U(I)/\Ker\la\to \C^I$. We note that the underlying space of the representation space $V(\bigoplus_{i\in I}\ch^{\alpha^I_i})$ can naturally be identified with $\C^I$.    

\begin{lemm} \label{lemm:AIUI}
The homeomorphism 
$$\varphi_I\colon U(I)/\Ker\la\to V(\bigoplus_{i\in I}\ch^{\alpha^I_i})$$ 
is $(\C^*)^n$-equivariant.
\end{lemm}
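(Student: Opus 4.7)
The plan is to verify equivariance by a direct computation using the explicit formula \eqref{phiI} for $\tilde\varphi_I$ together with Lemma~\ref{lemm:chlalach}(2).

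First I would pin down how $(\C^*)^n$ acts on each side. On the target $V(\bigoplus_{i\in I}\ch^{\alpha^I_i})\cong\C^I$, an element $g\in(\C^*)^n$ acts diagonally by $(w_i)_{i\in I}\mapsto(\ch^{\alpha^I_i}(g)\,w_i)_{i\in I}$. On the source, the $(\C^*)^n$-action on $U(\Si)/\Ker\la$ is, by construction, induced from the coordinatewise $(\C^*)^m$-action on $U(\Si)$ transported along the surjection $\la\colon(\C^*)^m\to(\C^*)^n$; concretely, given $g\in(\C^*)^n$, choose a lift $h=(h_1,\ldots,h_m)\in(\C^*)^m$ with $\la(h)=g$, and set $g\cdot[z]:=[h\cdot z]$. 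This is well-defined precisely because two such lifts differ by an element of $\Ker\la$, and $\Ker\la$ acts trivially on the quotient $U(I)/\Ker\la$.

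Next I would compute both sides. On the target, using Lemma~\ref{lemm:chlalach}(2) and the definition \eqref{defl} of $\la$,
\[
\ch^{\alpha^I_i}(\la(h))=\ch^{\alpha^I_i}\Bigl(\prod_{k=1}^m\la_{\beta_k}(h_k)\Bigr)=\prod_{k=1}^m h_k^{\langle\alpha^I_i,\beta_k\rangle}.
\]
On the source, expanding \eqref{phiI} gives
\[
\tilde\varphi_I(h\cdot z)_i=\prod_{k=1}^m(h_kz_k)^{\langle\alpha^I_i,\beta_k\rangle}=\Bigl(\prod_{k=1}^m h_k^{\langle\alpha^I_i,\beta_k\rangle}\Bigr)\cdot\prod_{k=1}^m z_k^{\langle\alpha^I_i,\beta_k\rangle}=\ch^{\alpha^I_i}(\la(h))\cdot\tilde\varphi_I(z)_i.
\]
Thus $\tilde\varphi_I(h\cdot z)=\la(h)\cdot\tilde\varphi_I(z)$ for every $h\in(\C^*)^m$ and every $z\in U(I)$, which descends to $\varphi_I(g\cdot[z])=g\cdot\varphi_I([z])$, as required.

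There is no genuine obstacle: the computation is essentially formal once one unwinds the definitions. The only point that deserves a moment of care is the consistency between the two ways of writing $\tilde\varphi_I$ in \eqref{phiI} (the product over all $k$ versus the product over $k\notin I$ with the factor $z_i$); both are used here, and the equality between them relies on $\langle\alpha^I_i,\beta_j\rangle=\delta_{ij}\mathbf 1$ for $i,j\in I$, which is exactly the duality relation defining $\{\alpha^I_i\}_{i\in I}$.
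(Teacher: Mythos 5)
Your proposal is correct and follows essentially the same route as the paper: verify the equivariance identity $\tilde\varphi_I(h\cdot z)=\la(h)\cdot\tilde\varphi_I(z)$ at the level of $U(I)$ using \eqref{phiI} and Lemma~\ref{lemm:chlalach}(2), then descend to the quotient. The only (inessential) difference is that the paper fixes a convenient section $g\mapsto h$ of $\la$ (using Lemma~\ref{lemm:comp}) and a normalized representative $z$ with $z_k=1$ for $k\notin I$, whereas you carry out the same computation for an arbitrary lift $h$ and arbitrary $z\in U(I)$.
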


\begin{proof}
To $g\in (\C^*)^n$ we associate an element $h=(h_1,\dots,h_m)\in(\C^*)^m$ defined by 
\[
h_i=\begin{cases} \ch^{\alpha^I_i}(g) \quad&\text{for $i\in I$},\\
1 \quad&\text{for $i\notin I$.}\end{cases}
\]
Then 
$$\la(h)=\prod_{i=1}^m\la_{\beta_i}(h_i)=\prod_{i\in I}\la_{\beta_i}(\chi^{\alpha^I_i}(g))=g$$
where the last identity follows from Lemma~\ref{lemm:comp}.   
Let $z=(z_1,\dots,z_m)\in U(I)$ where $z_i=1$ for $i\notin I$.  Then $\varphi_I(z)=(z_i)_{i\in I}$ and $\varphi_I(hz)=(h_iz_i)_{i\in I}=(\ch^{\alpha^I_i}(g)z_i)_{i\in I}$.  This shows that $\varphi_I$ is $(\C^*)^n$-equivariant.  
\end{proof}

\begin{lemm} \label{lemm:transition}
The transition functions of the local charts $\{(U(I)/\Ker\la,\varphi_I)\}$ for $X(\De)$ are smooth.  
\end{lemm}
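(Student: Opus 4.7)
The plan is to make the transition map between two top-dimensional charts completely explicit and then observe that all the ``bad'' exponentials happen over $\C^*$, where the formula $z\mapsto z^\mu$ is smooth.

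First, I would restrict to two simplices $I,J\in\Si^{(n)}$ and work on the overlap $U(I)\cap U(J)=U(I\cap J)$. By Claim~2 of Section~\ref{sect:5} the section
\[
\tilde f_I(w)=(z_1,\dots,z_m),\qquad z_i=w_i\ (i\in I),\ z_k=1\ (k\notin I),
\]
gives an inverse to $\varphi_I$, so the transition is $\varphi_{J,I}:=\varphi_J\circ\varphi_I^{-1}=\varphi_J\circ[\tilde f_I]$. Plugging $\tilde f_I(w)$ into the formula \eqref{phiI} for $\varphi_J$ and noting that $z_k=1$ for $k\notin I\cup J$, I obtain the explicit expression
\begin{equation*}
\bigl(\varphi_{J,I}(w)\bigr)_j \;=\; z_j\cdot\prod_{k\in I\setminus J}w_k^{\langle\alpha^J_j,\beta_k\rangle},\qquad j\in J,
\end{equation*}
where $z_j=w_j$ if $j\in I\cap J$ and $z_j=1$ if $j\in J\setminus I$.

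Next I would identify the domain of $\varphi_{J,I}$ inside $\C^I$. A point of $U(I\cap J)/\Ker\la$ corresponds, through $\varphi_I$, to a tuple $(w_i)_{i\in I}$ with $w_i\neq 0$ for every $i\in I\setminus J$, since those indices lie outside $I\cap J$ and hence must carry nonzero coordinates by the definition of $U(I\cap J)$. In particular, the only factors in the displayed formula that are raised to exponents in $\MR\setminus\Z$ are the $w_k$ with $k\in I\setminus J$, and these are nonzero on the overlap.

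Now the key point: for any $\mu=(b+\sqrt{-1}c,v)\in\MR$, the map $\C^*\to\C^*$, $z\mapsto z^\mu=|z|^{b+\sqrt{-1}c}(z/|z|)^v$, is smooth in $z$ and $\bar z$, simply because $|z|$ and $z/|z|$ are smooth functions of $(z,\bar z)$ on $\C^*$. Applying this coordinatewise with $\mu=\langle\alpha^J_j,\beta_k\rangle$ shows that each factor $w_k^{\langle\alpha^J_j,\beta_k\rangle}$ is smooth on $\{w_k\neq 0\}$, and the remaining factor $z_j$ is either the smooth function $w_j$ or the constant $1$. Hence $\varphi_{J,I}$ is smooth on its domain, and by symmetry its inverse $\varphi_{I,J}$ is smooth as well, which is all we need.

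I do not expect any real obstruction here: the only thing to be careful about is the bookkeeping of indices when verifying that every $w_k$ carrying a non-integral or complex exponent actually lies in $\C^*$ on the overlap. This is an immediate consequence of the identity $U(I)\cap U(J)=U(I\cap J)$ together with the defining condition \eqref{U(I)} of $U(I\cap J)$, so the argument reduces to the explicit formula above plus Lemma~\ref{lemm:gmu}(1) for the behaviour of $z\mapsto z^\mu$ away from the origin.
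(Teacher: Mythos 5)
Your proof is correct and follows essentially the same route as the paper: both compute the transition map via $\varphi_J\circ\varphi_I^{-1}=\tilde\varphi_J\circ\tilde f_I$ using \eqref{phiI}, reduce the $j$-th component to a monomial $\prod_{i\in I}w_i^{\langle\alpha^J_j,\beta_i\rangle}$, and then observe that each factor is smooth because $w_i\neq 0$ for $i\in I\setminus J$ while $\langle\alpha^J_j,\beta_i\rangle=\delta_{ji}\1$ for $i\in I\cap J$. The only differences are cosmetic: you expand the duality identity to write out $z_j$ explicitly and spell out a bit more carefully why $z\mapsto z^\mu$ is smooth on $\C^*$, which the paper leaves implicit.
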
  

\begin{proof}
Let $J$ be another element of $\Si^{(n)}$ such that $\{v_j\mid j\in J\}$ is a $\Z$-basis of $\Z^n$.  Since 
$\varphi_J\circ \varphi_I^{-1}=\tilde\varphi_J\circ \tilde f_I$, it follows from \eqref{phiI} that the $j$-component of $\varphi_J(\varphi_I^{-1}(w_i)_{i\in I})$ for $(w_i)_{i\in I}\in \varphi_I(U(I)\cap U(J)\textcolor{blue}{)}\subset \C^I$ is given by 
\begin{equation} \label{tran}
\prod_{i\in I}w_i^{\langle \alpha^J_j,\beta_i\rangle}
\end{equation}
which is smooth since $w_i\not=0$ for $i\in I\backslash J$ and $\langle \alpha^J_j,\beta_i\rangle=\delta_{ji}\1$ for $i\in J$. 
\end{proof}

We express $\beta_i=(b_i+\sqrt{-1}c_i,v_i)\in\RZRn=\MR^n$ as before.  

\begin{lemm} \label{lemm:conjugate}
If $c_i=0$ for any $i$, then the transition function \eqref{tran} is equivariant with respect to the complex conjugation on $\C^n$, so that the complex conjugation on $\C^n$ induces an involution, called the \emph{conjugation}, on $X(\De)$ with $n$-dimensional fixed point set. 
\end{lemm}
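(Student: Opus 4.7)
The plan is to verify that the explicit formula \eqref{tran} for the $j$-th component of $\varphi_J\circ\varphi_I^{-1}$ commutes with the coordinate-wise complex conjugation on $\C^I$ and $\C^J$, and then observe that these locally defined conjugations patch together on $X(\De)$.

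The key step will be to show that, under the assumption $c_i=0$ for every $i$, each exponent $\langle\alpha^J_j,\beta_i\rangle\in\MR$ appearing in \eqref{tran} has vanishing $c$-component. I would first establish this for the dual vectors themselves: by Lemma~\ref{lemm:dual} applied to the basis $\{\beta_j\}_{j\in J}$, the matrix recording $\{\alpha^J_j\}_{j\in J}$ is, up to a permutation conjugation, the inverse of a block matrix with diagonal blocks $B$ and $V$ and off-diagonal block $C$ assembled from the $b_j,c_j,v_j$ with $j\in J$. Since $c_j=0$ for all $j\in J$, the block $C$ vanishes, the inverse is block-diagonal, and hence each coordinate of every $\alpha^J_j$ has zero $c$-component.

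Next I would use the multiplication rule \eqref{prod} together with the coordinatewise addition in $\MR^n$: a product of two elements of $\MR$ with zero $c$-component again has zero $c$-component, and sums have the same property. Consequently the pairing $\langle\alpha^J_j,\beta_i\rangle=\sum_k(\alpha^J_j)^k\beta_i^k\in\MR$ also has vanishing $c$-component for every $i\in I$. By Lemma~\ref{lemm:gmu}(2) this forces $\overline{w^{\langle\alpha^J_j,\beta_i\rangle}}=\bar w^{\langle\alpha^J_j,\beta_i\rangle}$ for every $w\in\C^*$; the factors indexed by $i\in I\cap J$, where the exponent is $\1$ or $\0$, are trivially compatible with conjugation. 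Taking the product in \eqref{tran} yields that $\varphi_J\circ\varphi_I^{-1}$ intertwines the complex conjugations on $\C^I$ and $\C^J$.

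With the transition functions conjugation-equivariant, the chart-wise involutions $(w_i)_{i\in I}\mapsto(\bar w_i)_{i\in I}$ glue into a globally defined involution on $X(\De)$. Its fixed locus is locally $\R^I\subset\C^I$, a real $n$-dimensional submanifold, so the global fixed set is an $n$-dimensional smooth submanifold. I expect the main hurdle to be the first step, namely tracking the hypothesis $c_i=0$ through the block-matrix inversion of Lemma~\ref{lemm:dual} to deduce the same vanishing for the duals $\alpha^J_j$; once this is in hand, the remainder of the argument is a direct application of \eqref{prod} and Lemma~\ref{lemm:gmu}(2).
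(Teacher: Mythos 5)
Your proposal is correct and follows the paper's own route: both proofs reduce to checking that the $c$-component of each exponent $\langle\alpha^J_j,\beta_i\rangle$ in \eqref{tran} vanishes and then invoking Lemma~\ref{lemm:gmu}(2). The paper asserts the vanishing of that $c$-component without elaboration; you fill in the omitted justification by tracking the block-matrix inversion in the proof of Lemma~\ref{lemm:dual} (the block $C$ vanishes, so the $\alpha^J_j$ inherit zero $c$-component) and by observing that \eqref{prod} and the additive structure of $\MR$ preserve the vanishing. This is a faithful expansion of the same argument, not a different one.
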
  

\begin{proof}
Since $c_i=0$ for any $i$, the $c$-component of $\langle \alpha^J_j,\beta_i\rangle$ in \eqref{tran} is zero.  Therefore the lemma follows from Lemma~\ref{lemm:gmu} (2).   
\end{proof}

\begin{rema}
The fixed point set of the conjugation on $X(\De)$ in Lemma~\ref{lemm:conjugate} has the restricted action of $(\R^*)^n$ which has an open dense orbit isomorphic to $(\R^*)^n$ and the number of orbits is finite.  Such a manifold is called a \emph{real topological toric manifold} and discussed in Section~\ref{sect:12}.
\end{rema}  

If $\b_i=\v_i$ and $c_i=0$ for any $i$, then $X(\De)$ is a toric manifold and the transition function in \eqref{tran} is a Laurent monomial in $w_i$'s.  More generally, it follows from Lemma~\ref{lemm:gmu} (3) that the transition function in \eqref{tran} is a Laurent monomial in $w_i$'s and $\bar w_i$'s if $\b_i$ is an integral vector congruent to $\v_i$ modulo 2 and $c_i=0$ for any $i$. 

\begin{exam}
Here is an example of a topological toric manifold of dimension $4$ which is not a toric manifold.  Let $\Si$ be an abstract simplicial complex defined by 
\[
\Si:=\{\emptyset, \{1\}, \{2\}, \{3\}, \{4\}, \{1,2\}, \{2,3\}, \{3,4\}, \{4,1\}\}.
\]
Let $e_1, e_2$ denote the standard basis of $\R^2$ and define 
\begin{align*}
\beta_1&=(b_1,v_1):=(e_1,e_1), &\beta_2&=(b_2,v_2):=(e_2,e_2)\\
\beta_3&=(b_3,v_3):=(-e_1,-e_1-2e_2), & \beta_4&=(b_4,v_4):=(-e_1-e_2,-e_1-e_2)
\end{align*}
where $c_i$ for $1\le i\le 4$ is understood to be $0$.  Note that $\De=(\Si,\beta)$ is a complete non-singular topological fan of dimension 2.  Note also that $(\Si,b)$ defines an ordinary complete fan while the 2-dimensional cones obtained from $(\Si,v)$ have an overlap and defines a multi-fan, see figure~\ref{fig:fans}.  Note also that $b_i$ is an integral vector congruent to $v_i$ modulo $2$ and $c_i=0$ for any $i$, so that the transition functions in our case should be Laurent monomials in $w_1, w_2, \bar{w}_1$ and $\bar{w}_2$.  In fact, they are given explicitly below.  

\begin{center}
\begin{figure}[h]
	\includegraphics{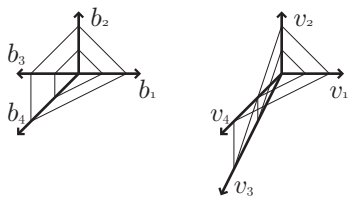}
\caption{vectors $b_i$ and $v_i$}
\label{fig:fans}
\end{figure}
\end{center} 

The dual set of $\{\beta_j\}_{j\in J}$ for $J\in\Si^{(2)}$ is given as follows.
\begin{align*}
\alpha_1^J&=(e_1,e_1) , & \alpha_2^J&=(e_2,e_2) & \text{for $J=\{1,2\}$,}\\ 
\alpha_2^J&=(e_2,-2e_1+e_2) , & \alpha_3^J&=(-e_1,-e_1) & \text{for $J=\{2,3\}$,}\\ 
\alpha_3^J&=(-e_1+e_2,e_1-e_2) , & \alpha_4^J&=(-e_2,-2e_1+e_2) & \text{for $J=\{3,4\}$,}\\ 
\alpha_4^J&=(-e_2,-e_2) , & \alpha_1^J&=(e_1-e_2,e_1-e_2) & \text{for $J=\{4,1\}$.}
\end{align*}
Therefore, it follows from \eqref{tran} that $X(\De)$ is obtained by gluing four copies of $\C^2$ corresponding to elements in $\Si^{(2)}$ as follows: 
\[
\begin{CD}
\quad \C^2=\C^{\{3,2\}}\qquad (w_1^{-1}, w_1^{-1}\bar{w}_1w_2) @<<< \qquad (w_1,w_2) \qquad \C^{\{1,2\}}=\C^2\\
\qquad\qquad @V f_1VV  @VVV \qquad \ \quad \ \\
\quad \C^2=\C^{\{3,4\}}\quad (\bar{w}_1^{-1}\bar{w}_2, \bar{w}_1w_1^{-1}\bar{w}_2^{-1})@<f_2<< \quad (w_1w_2^{-1},w_2^{-1}) \quad \C^{\{1,4\}}=\C^2\\
\end{CD}
\]
where $f_1(u_1,u_2)=(u_1\bar{u}_2,\bar{u}_2^{-1})$, $f_2(v_1,v_2)=(\bar{v}_1^{-1}, \bar{v}_1v_1^{-1}v_2)$ and the horizontal maps above glue $\C^2$ along $\C^*\times \C$ while the vertical ones glue $\C^2$ along $\C\times\C^*$. 

One can check that $X(\De)$ is a closed smooth manifold (this is true for an arbitrary complete non-singular topological fan $\De$ as is seen in the next section).  It is simply connected by Proposition~\ref{prop:simply}, of dimension 4 and admits a smooth effective action of $(S^1)^2$.  One can also check that our $X(\De)$ has the same cohomology ring as $\C P^2\#\C P^2$ using Proposition~\ref{prop:cohom} in Section~\ref{sect:8}.  Therefore it follows from the classification result on closed smooth 4-manifolds with effective smooth action of $(S^1)^2$ (see \cite{or-ra70}) that our $X(\De)$ is diffeomorphic to $\C P^2\#\C P^2$.  As is well-known, $\C P^2\#\C P^2$ is not (the underlying manifold of) a toric manifold, in fact, it does not admit\textcolor{red}{s} an almost complex structure.   
\end{exam}

\section{Global properties of $X(\De)$} \label{sect:6}

In this section, we establish that $X(\De)$ is a topological toric manifold and study the tangent bundle of $X(\De)$.  

\begin{lemm} \label{lemm:haus}
$X(\De)$ is Hausdorff.
\end{lemm}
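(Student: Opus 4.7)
My plan is to show $X(\De)$ is Hausdorff by establishing that the projection $\pi\colon U(\Si)\to X(\De)$ is an open map and then separating distinct points via the local charts. Openness of $\pi$ is immediate: for any open $V\subseteq U(\Si)$, the saturation $\pi^{-1}(\pi(V))=\bigcup_{h\in\Ker\la}hV$ is open as a union of translates by homeomorphisms. Consequently each $O_I:=\pi(U(I))$ for $I\in\Si^{(n)}$ is open in $X(\De)$, and by Claim~2 of Section~\ref{sect:5} is homeomorphic to $\C^I$ via $\varphi_I$, hence Hausdorff. Since $\Si$ is pure of dimension $n-1$, every $z\in U(\Si)$ has $I(z)$ contained in some maximal simplex, so the $O_I$ cover $X(\De)$.

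Given distinct $[z],[w]\in X(\De)$, if both lie in a common $O_I$ they are separated by the Hausdorff property of $\C^I$. Otherwise, I choose a maximal $I\in\Si^{(n)}$ with $[z]\in O_I$ and $[w]\notin O_I$; the condition $[w]\notin O_I$ forces $I(w)\not\subseteq I$, and I pick $k_0\in I(w)\setminus I$ along with a maximal $J\in\Si^{(n)}$ containing $I(w)$. Since $k_0\in J$, the function $[u]\mapsto|\varphi_J([u])_{k_0}|$ is continuous on $O_J$ and vanishes at $[w]$, producing open neighborhoods $W_\epsilon:=\{[u]\in O_J:|\varphi_J([u])_{k_0}|<\epsilon\}$ of $[w]$. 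By the transition formula \eqref{tran}, on the overlap $O_I\cap O_J$ this same function equals $|\prod_{i\in I\setminus J}v_i^{\mu_i}|$ in the $\varphi_I$-coordinates $v_i=\varphi_I([u])_i$, where $\mu_i:=\langle\alpha^J_{k_0},\beta_i\rangle\in\MR$ (the factors with $i\in I\cap J$ drop out because $\langle\alpha^J_{k_0},\beta_i\rangle=\0$ for such $i$, since $k_0\notin I$). I then take a small ball $U_\delta$ around $\varphi_I([z])\in\C^I$ on which this monomial is bounded in absolute value below by some $\epsilon_0>0$; the open sets $\varphi_I^{-1}(U_\delta)\subseteq O_I$ and $W_{\epsilon_0}$ are then disjoint open neighborhoods of $[z]$ and $[w]$.

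The main obstacle is verifying the required lower bound of the monomial, which amounts to showing that the $(1,1)$-entry of $\mu_i$ (the real-valued \emph{$b$-part}, in the notation $\mu=(b+\sqrt{-1}c,v)\in\RZR$) is non-positive for each $i\in(I\setminus J)\cap I(z)$, that is, the indices where $v_i$ vanishes at $\varphi_I([z])$. This sign condition is a combinatorial-geometric consequence of the fan axiom $\angle b_I\cap\angle b_J=\angle b_{I\cap J}$: since $\angle b_I$ does not meet the interior of $\angle b_J$, the generators $b_i$ for $i\in I\setminus J$ must lie on the non-positive side of the hyperplane dual to $b_{k_0}$ within the basis $\{b_j\}_{j\in J}$ of $\R^n$, giving exactly the required inequality on the $b$-parts of the exponents $\mu_i$.
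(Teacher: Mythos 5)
Your overall strategy---separating two points that do not lie in a common chart by means of a real-valued function built from absolute values of $\varphi$-coordinates---is the same as the paper's, and the preliminary openness/covering steps are fine. But the specific covector you use to control signs is wrong, and this is exactly where the argument breaks. You fix a single $k_0\in I(w)\setminus I$ together with a maximal $J\in\Si^{(n)}$ containing $I(w)$, and claim that the $b$-component of $\mu_i=\langle\alpha^J_{k_0},\beta_i\rangle$, namely $\langle a^J_{k_0},b_i\rangle$ (where $a^J_{k_0}\in\R^n$ is dual to $\{b_j\}_{j\in J}$), is non-positive for all $i\in(I\setminus J)\cap I(z)$, asserting that this follows from the fan axiom $\angle b_I\cap\angle b_J=\angle b_{I\cap J}$. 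That axiom only controls the intersection of the two cones; it does not force each individual dual covector of $J$ to be non-positive on each generator of $\angle b_I$. For a concrete counterexample take $n=3$, $b_3=e_3$, $b_4=e_1$, $b_5=e_2$, $b_1=(-1,1,0)$, $b_2=(-1,-1,0)$, $I=\{1,2,3\}$, $J=\{3,4,5\}$. One checks $\angle b_I\cap\angle b_J=\angle b_{\{3\}}$ (the positive $e_3$-axis), so the fan axiom holds, yet $a^J_5=e_2$ and $\langle a^J_5,b_1\rangle=1>0$. If $I(z)=\{1\}$ and $I(w)=\{5\}$ (with $\{1,5\}\notin\Si$), then $k_0=5$ is forced, the monomial contains the factor $|v_1|^{+1}$ which tends to $0$ near $\varphi_I([z])$, and the claimed lower bound $\epsilon_0>0$ does not exist.

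The paper repairs precisely this by choosing a genuine separating covector $a\in\R^n$: since $\angle b_I\cap\angle b_J=\angle b_{I\cap J}$ has dimension $\le n-1$, one can arrange $\angle b_I\subset\{\langle a,\cdot\rangle\ge 0\}$, $\angle b_J\subset\{\langle a,\cdot\rangle\le 0\}$, with equality $\langle a,\cdot\rangle=0$ on each cone cutting out exactly $\angle b_{I\cap J}$; in particular $\langle a,b_i\rangle>0$ for $i\in I\setminus J$ and $<0$ for $j\in J\setminus I$. Then $f_1(z)=\prod_k|z_k|^{\langle a,b_k\rangle}$ is $\Ker\la$-invariant and defined on $U(I)$, its reciprocal is defined on $U(J)$, they agree on the overlap, and together they give a continuous function $(U(I)\cup U(J))/\Ker\la\to[0,\infty]$ taking the value $0$ at one point and $\infty$ at the other. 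So the idea you want is a separation-lemma covector, not a single dual covector $a^J_{k_0}$; in general no such $a$ is a scalar multiple of any one $a^J_k$, and you need the separation lemma for the two simplicial cones to produce it.
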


\begin{proof}
Let $[x]$ and $[y]$ be points in $X(\De )$ represented by $x, y\in U(\Si)$.  If $[x]$ and $[y]$ are contained in a same local chart, they can be separated by open neighborhoods since the chart is homeomorphic to $\C^n$.  Suppose $x$ is contained in $U(I)\setminus U(J)$ and $y$ is contained in $U(J)\setminus U(I)$.  Since $\dim \angle b_I\cap\angle b_J\le n-1$, there is an element $a \in \R ^n$ such that
\begin{equation*}
\begin{split}
&H^+ := \{ b \in \R ^n \mid \langle a,b \rangle \geq 0 \} \supset \angle b_I, \quad 	H^- := \{ b \in \R ^n \mid \langle a,b \rangle \leq 0 \} \supset \angle b_J,\\ 
&\text{and}\quad H^+ \cap \angle b_J = H^- \cap \angle b_I = \angle b_I \cap \angle b_J.
\end{split} 
\end{equation*}

We define $f_1\colon U(I) \to \R _{\geq 0}$ and $f_2\colon U(J)\to \R _{\geq 0}$ by 
\begin{equation*}
	f_1(z_1,\dots,z_m):=\prod _{k=1}^m |z_k|^{\langle a,b_k\rangle}\quad\text{and}\quad f_2(z_1,\dots,z_m):=\prod _{k=1}^m |z_k|^{-\langle a,b_k\rangle}.
\end{equation*}
Both functions are well-defined (because $\langle a,b_i\rangle \ge 0$ for $i\in I$ and $-\langle a,b_j\rangle \ge 0$ for $j\in J$) and invariant under the action of $\Ker\la$ by \eqref{kerl}.   Moreover $f_1 =f_2^{-1}$ on $U(I) \cap U(J)$. Therefore, $f_1$ and $f_2$ define a continuous function 
\begin{equation*}
	f_1\cup f_2^{-1} : (U(I)\cup U(J))/\Ker \la \to [0,\infty ].
\end{equation*}

Since $x=(x_1,\dots,x_m)$ is contained in $U(I)\backslash U(J)$, $x_i = 0$ for some $i \in I\setminus J$ by \eqref{U(I)} while $\langle a,b_i \rangle $ is positive for any $i\in I\setminus J$ by the choice of $a$. Thus $f_1(x)=0$ and hence $(f_1\cup f_2^{-1})([x]) =0$. Similarly, since $f_2(y)=0$, we have $(f_1\cup f_2^{-1})([y])=\infty$. Therefore $X(\De )$ is Hausdorff. 
\end{proof}

\begin{lemm} \label{lemm:compact}
$X(\De)$ is compact.
\end{lemm}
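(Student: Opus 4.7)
The plan is to cover $X(\De)$ by finitely many compact subsets. For each $I\in\Si^{(n)}$, set $D^I:=\{w\in\C^I:|w_i|\le 1,\ \forall i\in I\}$, the closed unit polydisk. Since $\varphi_I\colon U(I)/\Ker\la\to\C^I$ is a homeomorphism by Section~\ref{sect:5}, each $\varphi_I^{-1}(D^I)$ is compact; because $\Si^{(n)}$ is finite, the lemma reduces to the equality
\[
X(\De)=\bigcup_{I\in\Si^{(n)}}\varphi_I^{-1}(D^I).
\]

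Let $[z]\in X(\De)$ be represented by $z=(z_1,\dots,z_m)\in U(\Si)$, and put $I_0:=\{k\in[m]\mid z_k=0\}$, which lies in $\Si$ by the definition of $U(\Si)$. I will find $I\in\Si^{(n)}$ with $I_0\subset I$ such that $|\tilde\varphi_I(z)_i|\le 1$ for every $i\in I$. For $i\in I_0$ this is automatic, since $\tilde\varphi_I(z)_i=z_i\prod_{k\notin I}z_k^{\langle\alpha^I_i,\beta_k\rangle}$ vanishes when $z_i=0$. For $i\in I\setminus I_0$, taking absolute values and using that the $b$-component of the product $\langle\alpha^I_i,\beta_k\rangle$ in $\MR$ equals the dot product $\langle a^I_i,b_k\rangle$ — where $\{a^I_i\}_{i\in I}\subset\R^n$ is dual to the basis $\{b_j\}_{j\in I}$ — one obtains
\[
\log|\tilde\varphi_I(z)_i|=\langle a^I_i,\eta\rangle,\qquad \eta:=\sum_{k\notin I_0}\log|z_k|\,b_k\in\R^n.
\]
Expanding in the basis $\{b_j\}_{j\in I}$, the inequalities $\langle a^I_i,\eta\rangle\le 0$ for $i\in I\setminus I_0$ are equivalent to requiring $-\eta\in\angle b_I+L_{I_0}$, where $L_{I_0}$ is the linear span of $\{b_i:i\in I_0\}$.

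To produce such an $I$, I invoke completeness of the ordinary fan $(\Si,b)$. When $I_0=\emptyset$, $L_{I_0}=0$ and completeness (together with the purity of $\Si$, which follows from completeness since lower-dimensional cones have Lebesgue measure zero) places $-\eta$ directly in some $\angle b_I$ with $I\in\Si^{(n)}$. Otherwise, choose $w:=\sum_{i\in I_0}b_i$, which lies in the relative interior of $\angle b_{I_0}$. The cones $\angle b_J$ with $J\in\Si$ and $J\not\supset I_0$ are closed sets avoiding $w$, so their complement is an open neighborhood of $w$; consequently any direction sufficiently close to $w/\|w\|$ meets only cones $\angle b_I$ with $I\supset I_0$. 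For $t>0$ large, $-\eta+tw$ has direction close to $w/\|w\|$ and, by completeness, belongs to some $\angle b_I$ with $I\in\Si^{(n)}$; this cone must satisfy $I_0\subset I$. Since $tw\in L_{I_0}$, subtracting gives $-\eta\in\angle b_I+L_{I_0}$, as required.

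The principal bookkeeping point is verifying that the $b$-component of products in the ring $\MR$ coincides with the usual dot product of the corresponding $b$-vectors in $\R^n$, so that the absolute value of $\tilde\varphi_I(z)_i$ depends only on the linear data $\{b_k\}$ (not on the $c_k$ or $v_k$). Once this is accepted, the completeness step is a standard fan-theoretic argument.
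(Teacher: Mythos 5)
Your proposal is correct, and the covering you use is exactly the paper's: both write $X(\De)=\bigcup_{I\in\Si^{(n)}}\varphi_I^{-1}(D^I)$ with $D^I$ the closed unit polydisk (the paper's $\overline{\varphi_I^{-1}(\chi_I(C_I))}$ equals $\varphi_I^{-1}(D^I)$ since $\overline{\chi_I(C_I)}=D^I$). What differs is the justification of the covering. The paper observes via $C_I=(-\log|\ |)^{-1}(\angle b_I)$ and completeness that the open dense orbit $(\C^*)^n$ is contained in $\bigcup_I\varphi_I^{-1}(\chi_I(C_I))$, and then concludes by a one--line density argument: a finite union of closed subsets of $X(\De)$ containing a dense set is all of $X(\De)$. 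You instead verify the covering pointwise, including for points $[z]$ off the open orbit (where $I_0\neq\emptyset$), which forces the small perturbation step $-\eta\leadsto-\eta+tw$ to push into the star of $\angle b_{I_0}$. Both are sound; the paper's route is shorter because density makes the boundary cases unnecessary, while yours is more explicit and makes transparent that every $[z]$ lands in a polydisk chart indexed by a maximal cone $I$ with $I\supset I_0$. Your reduction $\log|\tilde\varphi_I(z)_i|=\langle a^I_i,\eta\rangle$ (using that the $b$--component in $\MR$ is multiplicative and additive, so dot products of $b$--vectors are what survive after taking absolute values) is exactly the bookkeeping the paper uses to identify $C_I$ with $(-\log|\ |)^{-1}(\angle b_I)$. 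One remark: both proofs implicitly use that the maximal cones $\{\angle b_I\}_{I\in\Si^{(n)}}$ already cover $\R^n$, i.e.\ purity of $\Si$; you at least flag this (via the measure--zero remark), though the clean justification is that the fan condition $\angle b_I\cap\angle b_J=\angle b_{I\cap J}$ together with completeness forces every cone to be a face of an $n$--dimensional cone.
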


\begin{proof}
We shall show that $X (\De )$ is covered by finitely many compact subsets. By the definition of $X(\De )$, $(\C ^*)^n$ is naturally embedded as the open dense orbit. The embedding $(\C ^*)^n \hookrightarrow X(\De )$ is given by 
\begin{equation} \label{embe}
	(\C^*)^n\ni g  \stackrel{\chi_I}\longrightarrow ( \ch^{\alpha^I_i}(g))_{i\in I}\in V(\bigoplus_{i\in I}\ch^{\alpha^I_i}) \stackrel{\varphi_I^{-1}}\longrightarrow U(I)/\Ker\la \subset X(\De),
\end{equation}
where $I\in\Si^{(n)}$ and the embedding does not depend on the choice of $I$ as is shown in the proof of Lemma~\ref{lemm:AIUI}.  We consider a  subset 
\begin{equation} \label{CI}
	C_I:=\{ g \in (\C ^*)^n \mid  |\ch^{\alpha^I_i}(g)|=\prod_{k=1}^n|g_k|^{a^{Ik}_i} \leq 1 \text{ for all } i \in I\}, 
\end{equation}
where $g=(g_1,\dots,g_n)$ and  $a^I_i=(a_i^{I1},\dots,a_i^{In}) \in \R^n$ is the real part of the first component of $\alpha^I_i\in \MR^n=\RZRn$.  We also consider a surjective homomorphism
\begin{equation} \label{-log}
	-\log|\ | : g=(g_1,\dots ,g_n) \to -\log |g|=(-\log |g_1|,\dots , -\log |g_n|)
\end{equation}
from $(\C ^*)^n$ to $\R ^n$.  It follows from \eqref{CI} and \eqref{-log} that $\langle a^I_i,-\log |g|\rangle \ge 0$ if and only if $g\in C_I$. Hence 
\begin{equation} \label{logCI}
\text{$C_I=(-\log |\ |)^{-1}(\angle b_I)$.}
\end{equation}
Since $\De $ is complete, this shows that 
\begin{equation} \label{C*de}
	(\C ^*)^n = \bigcup _{I \in \Si ^{(n)}}C_I\quad\text{and hence} \quad X(\De)=\bigcup_{I\in \Si^{(n)}}\overline{\varphi_I^{-1}(\chi_I(C_I))}
\end{equation}
where $\varphi_I^{-1}\circ \chi_I$ is the embedding of $(\C^*)^n$ into $X(\De)$ in \eqref{embe}.  The closure $\overline{\varphi_I^{-1}(\chi_I(C_I))}$ is a compact subset of $U(I)/\ker \la$, that is homeomorphic to the polydisk $\{(w_i)_{i\in I}\in \C^I \mid |w_i|\le 1 \text{ for $\forall i\in I$}\}$ via $\varphi_I$ by \eqref{CI}. Hence $X (\De )$ is compact. 
\end{proof}

\begin{coro} \label{coro:XDelta}
$X(\De)$ with the local charts $\{(U(I)/\Ker\la,\varphi_I)\}$ is a topological toric manifold.
\end{coro}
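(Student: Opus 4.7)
The plan is to check each clause of the definition of a topological toric manifold for $X(\De)$ by invoking the results of Sections~\ref{sect:4}--\ref{sect:6} in sequence; essentially all of the work has already been done, and the corollary amounts to packaging it.

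First, I would establish the closed smooth $2n$-manifold structure. Since $\Si$ is pure of dimension $n-1$, every $I\in\Si$ lies in some $J\in\Si^{(n)}$, whence $U(I)\subset U(J)$ and
\[
X(\De)=\bigcup_{J\in\Si^{(n)}}U(J)/\Ker\la
\]
is a \emph{finite} open cover. By Lemma~\ref{lemm:AIUI} each $\varphi_J\colon U(J)/\Ker\la\to \C^J\cong \C^n$ is a homeomorphism, by Lemma~\ref{lemm:transition} the transition functions between these charts are smooth, and by Lemmas~\ref{lemm:haus} and~\ref{lemm:compact} the total space is Hausdorff and compact. Together these ingredients give $X(\De)$ the structure of a closed smooth manifold of real dimension $2n$.

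Second, I would verify the properties of the group action. By Lemma~\ref{lemm:kerl} the homomorphism $\la$ of \eqref{defl} is surjective, so the natural $(\C^*)^m$-action on $U(\Si)$ descends to an action of $(\C^*)^m/\Ker\la\cong(\C^*)^n$ on $X(\De)$, and each chart $U(J)/\Ker\la$ is invariant because $U(J)$ is $(\C^*)^m$-invariant. Lemma~\ref{lemm:AIUI} identifies this local action, via $\varphi_J$, with the smooth linear $(\C^*)^n$-action on $V(\bigoplus_{i\in J}\ch^{\alpha^J_i})$, so the action is smooth globally. For effectiveness, note that for $J\in\Si^{(n)}$ the non-singularity of $\De$ and the linear independence of $\{b_i\}_{i\in J}$ let one apply Lemma~\ref{lemm:dual} to produce a dual set $\{\alpha^J_i\}_{i\in J}$; Lemma~\ref{lemm:comp} then says $\bigoplus_{i\in J}\ch^{\alpha^J_i}$ is an automorphism of $(\C^*)^n$, so by Lemma~\ref{lemm:faithful} the local representation is faithful, and local faithfulness forces the global action to be effective.

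Third, I would display the open dense orbit and conclude. The open subset $U(\emptyset)/\Ker\la=(\C^*)^m/\Ker\la$ is a single $(\C^*)^n$-orbit, isomorphic to $(\C^*)^n$ via $\la$, and is dense in $X(\De)$ since in each chart $\varphi_J$ it is identified with the locus where all coordinates are non-zero (compare \eqref{embe}). Combined with the fact (from Lemma~\ref{lemm:AIUI}) that each $U(J)/\Ker\la$ is $(\C^*)^n$-equivariantly diffeomorphic to the faithful representation space $V(\bigoplus_{i\in J}\ch^{\alpha^J_i})$, which is by construction a direct sum of complex one-dimensional smooth $(\C^*)^n$-representations, this completes the verification of the defining axioms.

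There is no real obstacle here: the corollary is pure bookkeeping once Sections~\ref{sect:4}--\ref{sect:6} are in hand. The only subtle point worth flagging is that one must work with the cover indexed by maximal simplices $J\in\Si^{(n)}$ rather than by all of $\Si$, and it is precisely the purity of $\Si$ that makes this restricted cover exhaustive.
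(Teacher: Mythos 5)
Your proposal is correct and follows essentially the same route as the paper's own (much terser) proof: both conclude from Lemmas~\ref{lemm:haus}, \ref{lemm:compact}, \ref{lemm:AIUI}, and~\ref{lemm:transition}, with your version merely spelling out the bookkeeping — surjectivity of $\la$ via Lemma~\ref{lemm:kerl}, effectiveness via Lemmas~\ref{lemm:dual}, \ref{lemm:comp}, \ref{lemm:faithful}, and the observation that purity of $\Si$ lets one index the chart cover by $\Si^{(n)}$ — that the paper leaves implicit.
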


\begin{proof}
By Lemmas~\ref{lemm:haus} and~\ref{lemm:compact} $X(\De)$ is Hausdorff and compact and then Lemmas~\ref{lemm:AIUI} and~\ref{lemm:transition} show that $X(\De)$ is a topological toric manifold. 
\end{proof}

The topological toric manifold $X(\De)$ associated with a topological fan $\De=(\Si,\beta)$ has a canonical omniorientation.  In fact, the image of 
$$U(\Si)_i:=U(\Si)\cap \{(z_1,\dots,z_m)\in\C^m\mid z_i=0\}$$ 
by the quotient map $U(\Si)\to U(\Si)/\Ker\la=X(\De)$ is a characteristic submanifold $X(\De)_i$ of $X(\De)$ and the canonical normal orientation of  $U(\Si)_i$ to $U(\Si)$ induced from the $i$-th factor of $\C^m$ defines a normal orientation of $X(\De)_i$ to $X(\De)$ through the projection map.  This together with the orientation on $X(\De)$ (induced from the orientation on the open dense orbit) determines an orientation on $X(\De)_i$, thus an omniorientation on $X(\De)$ is assigned.   

Note that the identity \eqref{norm} holds for $\beta_i:=\beta(i)$ with respect to this omniorientation on $X(\De)$.  It is also clear that the simplicial complex $\Si(X(\De))$ associated with $X(\De)$ is the given $\Si$.  For later use we will record this fact as a lemma.

\begin{lemm} \label{lemm:cano}
The topological fan associated with the omnioriented topological toric manifold $X(\De)$ is the given topological fan $\De$.
\end{lemm}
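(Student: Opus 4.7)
The plan is to verify the two ingredients making up $\De(X(\De))$ agree with the given $\De$: the simplicial complex $\Si(X(\De))$ must equal $\Si$, and the map $\beta(X(\De))$ must equal $\beta$ on vertices. Since $X(\De)$ is already known to be a topological toric manifold by Corollary~\ref{coro:XDelta}, the machinery of Section~\ref{sect:3} applies; the strategy is to read off both pieces of data by lifting through the quotient map $U(\Si) \to X(\De)$, where the coordinate hyperplanes and the $(\C^*)^m$-action on $\C^m$ are manifest. Concretely, in each local chart $\varphi_I \colon U(I)/\Ker\la \to V(\bigoplus_{j\in I}\ch^{\alpha^I_j})$, formula \eqref{phiI} identifies $X(\De)_i \cap U(I)/\Ker\la$ with the coordinate hyperplane $\{w_i = 0\}$ when $i \in I$ and shows the intersection is empty when $i \notin I$ (since $z_i \ne 0$ on $U(I)$ in that case). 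The coordinate hyperplanes are precisely the closed codimension-two subsets fixed pointwise by a $\C^*$-subgroup in the local representation, so every characteristic submanifold of $X(\De)$ is some $X(\De)_i$; their connectedness follows from Lemma~\ref{lemm:XI}. Finally, $\bigcap_{i\in I} X(\De)_i$ is the image of $\{z \in U(\Si) : z_i = 0 \text{ for all } i \in I\}$, and this set meets some $U(J) \subset U(\Si)$ iff $I \subset J$ for some $J \in \Si$, iff $I \in \Si$; hence $\Si(X(\De)) = \Si$.

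For the second equality, I would use the explicit lift $h_i(t) := (1,\dots,t,\dots,1) \in (\C^*)^m$ with $t$ placed in the $i$-th slot. By \eqref{defl}, $\la(h_i(t)) = \la_{\beta_i}(t)$, so the action of $\la_{\beta_i}(\C^*)$ on $X(\De)$ is induced from the coordinatewise action of $h_i(\C^*)$ on $U(\Si)$, which multiplies $z_i$ by $t$ and fixes the other coordinates. This action manifestly fixes $U(\Si)_i$ pointwise and scales the normal direction (the $i$-th factor of $\C^m$) by $t$; descending through the projection, $\la_{\beta_i}(\C^*)$ fixes $X(\De)_i$ pointwise and acts on its normal bundle by scalar multiplication by $t$. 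Since the canonical omniorientation on $X(\De)$ was defined using precisely the standard orientation of the $i$-th factor of $\C^m$, the induced complex structure on the normal bundle of $X(\De)_i$ is the one for which $\la_{\beta_i}(t)$ acts as scalar multiplication by $t$; this is exactly the defining property \eqref{norm} of $\beta_i(X(\De))$, so $\beta_i(X(\De)) = \beta_i$.

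The main subtlety lies not in any single step but in verifying that the normal complex structure forced by the canonical coordinate orientation of $\C^m$ really is the one making $\la_{\beta_i}(t)$ act as multiplication by $t$ rather than by $\bar t$; per the discussion preceding Lemma~\ref{lemm:betai}, the two candidate invariant complex structures on the real normal plane differ and determine conjugate values of the associated character, so orientation bookkeeping is essential. This is settled cleanly by the explicit lift $h_i$, whose action on the $i$-th coordinate of $\C^m$ is by construction holomorphic scalar multiplication by $t$, thereby selecting the correct complex structure on the descended normal bundle. Combining the two paragraphs above then yields $\De(X(\De)) = \De$.
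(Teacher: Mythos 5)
Your proof is correct and takes essentially the same approach as the paper, which records the lemma with only the terse observations that \eqref{norm} holds for $\beta(i)$ with respect to the canonical omniorientation and that $\Si(X(\De))=\Si$ is "clear"; you have simply filled in the details using the same mechanism (reading off the characteristic submanifolds and the normal action from the lift $h_i(t)=(1,\dots,t,\dots,1)\in(\C^*)^m$ through the quotient map $U(\Si)\to X(\De)$). In particular, your attention to the orientation bookkeeping — that the canonical omniorientation is built from the $i$-th coordinate factor of $\C^m$ and that the holomorphic scaling by $h_i(t)$ therefore selects the correct invariant complex structure on the descended normal bundle — is exactly the content that the paper leaves implicit.
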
  

Let $\pi_i\colon (\C^*)^m \to \C^*$ be the projection onto the $i$-th factor of $(\C^*)^m$ and denote by $V(\pi_i)$ the complex one-dimensional representation space of $\pi_i$.  We consider the diagonal action of $(\C^*)^m$ on $U(\Si)\times V(\pi_i)$, i.e. 
\[
\begin{split}
&(g_1,\dots,g_m)((z_1,\dots,z_m),w)=((g_1z_1,\dots,g_mz_m),g_iw) \\
&\qquad\text{for $(g_1,\dots,g_m)\in (\C^*)^m,\ (z_1,\dots,z_m)\in U(\Si),\ w\in V(\pi_i)$}   
\end{split}
\]
and take the quotient by $\Ker\la$.  Since the action of $\Ker\la$ on $U(\Si)$ is free and the quotient by $\Ker\la$ has the residual action of $(\C^*)^m/\Ker\la=(\C^*)^n$, the first projection 
\[
(U(\Si)\times V(\pi_i))/\Ker\la \to U(\Si)/\Ker\la=X(\De)
\]
defines a complex $(\C^*)^n$-line bundle over $X(\De)$, which we denote by $L_i$.  On the other hand, since $X(\De)$ has the smooth action of $(\C^*)^n$, the tangent bundle $\tau X(\De)$ of $X(\De)$ is a $(\C^*)^n$-vector bundle.  

\begin{theo} \label{theo:bundle}
There is a canonical short exact sequence of $(\C^*)^n$-vector bundles
\[
 0\to \uC^{m-n}\to \bigoplus_{i=1}^mL_i  \to \tau X(\De)\to 0
\]
where $\uC^{m-n}$ denotes the product bundle over $X(\De)$ with fiber $\C^{m-n}$ on which the action of $(\C^*)^n$ is trivial.  
\end{theo}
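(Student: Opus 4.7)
The plan is to derive the sequence as the Atiyah-type sequence associated to the principal $K$-bundle $\pi\colon U(\Si)\to X(\De)$, where $K:=\Ker\la$, and then to descend from $U(\Si)$ to $X(\De)$ by taking $K$-quotients. The first thing to verify is that $K$ acts freely on $U(\Si)$. Let $z\in U(\Si)$ and set $J:=\{i:z_i=0\}$; by the definition of $U(\Si)$ one has $J\in\Si$, and the stabilizer of $z$ in $(\C^*)^m$ is the coordinate subgroup $\prod_{i\in J}\C^*$. Using that $\Si$ is pure, pick $I\in\Si^{(n)}$ with $J\subset I$; any $h\in K$ fixing $z$ then satisfies $h_k=1$ for $k\notin J$, hence a fortiori $h_k=1$ for $k\notin I$. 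Substituting this into the characterization \eqref{aibk} of $\Ker\la$ relative to $I$ collapses the defining equations to $h_i=1$ for every $i\in I$, whence $h=\1$. Therefore $\pi$ is a principal $K$-bundle of complex rank $m-n$.

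Since $U(\Si)$ is an open subset of $\C^m$ and the $(\C^*)^m$-action is the restriction of its standard linear action, the canonical trivialization $\tau U(\Si)=U(\Si)\times\C^m$ is $(\C^*)^m$-equivariant and splits as $\bigoplus_{i=1}^m U(\Si)\times V(\pi_i)$. Taking $K$-quotients yields the $(\C^*)^n$-equivariant vector bundle $\bigoplus_{i=1}^m L_i$ over $X(\De)$, precisely as defined just before the theorem.

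The Atiyah sequence for the principal $K$-bundle $\pi$ is then the short exact sequence
\[
0\to U(\Si)\times\mathfrak k \to \tau U(\Si) \to \pi^*\tau X(\De) \to 0
\]
of $(\C^*)^m$-equivariant bundles on $U(\Si)$, where $\mathfrak k:=\Lie K$ is a subspace of $\Lie(\C^*)^m=\C^m$ and the left map is the infinitesimal action $(z,\xi)\mapsto(z,\xi\cdot z)$ with $\xi\cdot z=(\xi_1z_1,\dots,\xi_mz_m)$. Because $(\C^*)^m$ is abelian its adjoint representation on $\mathfrak k$ is trivial, so the fiber action of $(\C^*)^m$ on $U(\Si)\times\mathfrak k$ is trivial; one checks directly that the inclusion above is equivariant because $g\cdot(\xi\cdot z)=\xi\cdot(gz)$ coordinate-wise. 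Descending through the free $K$-quotient transforms the three bundles into $X(\De)\times\mathfrak k=\uC^{m-n}$, $\bigoplus_{i=1}^m L_i$, and $\tau X(\De)$ respectively, giving the asserted
\[
0\to \uC^{m-n}\to \bigoplus_{i=1}^m L_i\to \tau X(\De)\to 0,
\]
with the trivial $(\C^*)^n$-structure on $\uC^{m-n}$ (since $(\C^*)^m$ already acted trivially on $\mathfrak k$, the residual $(\C^*)^n=(\C^*)^m/K$ does too).

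The main hurdle in filling in the details is the equivariant bookkeeping: one must check that the $(\C^*)^m$-equivariant structures described above descend to the claimed $(\C^*)^n$-equivariant structures once quotienting by $K$, and in particular that the $K$-quotient of $U(\Si)\times V(\pi_i)$ matches the bundle $L_i$ on the nose. Both points reduce to the freeness of $K$ on $U(\Si)$ and the triviality of the adjoint $(\C^*)^m$-representation on $\mathfrak k$, both of which are secured in the first two steps.
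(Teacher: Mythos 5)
Your proof is correct and takes essentially the same route as the paper: both pass from the tautological short exact sequence on $U(\Si)$ (with kernel the trivial bundle $U(\Si)\times\Lie(\Ker\la)$ embedded via the infinitesimal action and middle term $\tau U(\Si)=U(\Si)\times\C^m$) to the stated sequence by quotienting by the free $\Ker\la$-action. The only substantive addition you make is the explicit verification of freeness via \eqref{aibk}, which the paper asserts without proof just before introducing the line bundles $L_i$.
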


\begin{proof}
Although our proof is essentially same as in \cite[Proposition 4.5]{bu-pa-ra07}, the paper \cite{bu-pa-ra07} treats quasitoric manifolds where the acting group is a compact torus; so we shall give the proof below for convenience sake of the readers.

The differential of the quotient map $q\colon U(\Si)\to U(\Si)/\Ker\la=X(\De)$ induces a surjective bundle homomorphism 
\[
dq\colon \tau U(\Si)=U(\Si)\times \C^m \to \tau X(\De).
\]
The kernel of $dq$ is the tangent bundle along the fibers of the fiber bundle $q\colon U(\Si)\to X(\De)$ and can be identified with the trivial bundle $U(\Si)\times \Lie(\Ker\la)$ where $\Lie(\Ker\la)$ is the Lie algebra of $\Ker\la$.  Therefore we have a short exact sequence of vector bundles 
\begin{equation} \label{short}
0\to U(\Si)\times \Lie(\Ker\la)\stackrel{\rho}\longrightarrow \tau U(\Si)=U(\Si)\times \C^m \stackrel{dq}\longrightarrow \tau X(\De)\to 0
\end{equation}
where the monomorphism $\rho$ above is defined by $\rho(z,\eta)=(z,\eta_z)$ where $\eta_z$ denotes the fundamental vector field at $z$ associated with $\eta\in \Lie(\Ker\la)$.   In fact, the short exact sequence \eqref{short} is equivariant with respect to the natural actions of $(\C^*)^m$ where the action of $(\C^*)^m$ on $\Lie(\Ker\la)$ is trivial, that on $\C^m$ is the coordinatewise multiplication and that on $\tau X(\De)$ descends to the action of $(\C^*)^m/\Ker\la=(\C^*)^n$.  Taking quotients by $\Ker\la$, \eqref{short} reduces to a short exact sequence of $(\C^*)^n$-vector bundles 
\begin{equation} \label{short2}
 0\to X(\De)\times \Lie(\Ker\la)\to (U(\Si)\times \C^m)/\Ker\la \to \tau X(\De)\to 0.
\end{equation}
Here $X(\De)\times \Lie(\Ker\la)=\uC^{m-n}$ and $(U(\Si)\times \C^m)/\Ker\la=\bigoplus_{i=1}^mL_i$, so \eqref{short2} implies the theorem. 
\end{proof}

\section{$X(\De)$ as an $(S^1)^n$-manifold} \label{sect:7}

In this section we study the topology of $X(\De)$ as an $(S^1)^n$-manifold.  Since $X(\De)$ is locally equivariantly diffeomorphic to sum of complex one-dimensional representation spaces, the orbit space $X(\De)/(S^1)^n$ is a manifold with corners, so that faces of $X(\De)/(S^1)^n$ can naturally be defined.  We think of $X(\De )/(S^1)^n$ itself as a face of $X(\De )/(S^1)^n$. 

\begin{lemm} \label{lemm:faces}
All faces of $X(\De )/(S^1)^n$ are contractible and the face poset of $X(\De )/(S^1)^n$ coincides with the inverse poset of $\Si$.
\end{lemm}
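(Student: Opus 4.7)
The plan is to describe $X(\De)/(S^1)^n$ explicitly as the quotient of an open subset of $\R^m_{\ge 0}$ by a linear action, read off the face structure from coordinate-vanishing patterns, and contract each face by a straight-line homotopy.

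First, I would identify the orbit space. The preimage of $T^n:=(S^1)^n$ under $\la\colon (\C^*)^m \to (\C^*)^n$ is
\[
\la^{-1}(T^n) = \bigl\{(h_1,\dots,h_m)\in (\C^*)^m : \textstyle\sum_{k=1}^m (\log|h_k|)\, b_k = 0 \text{ in } \R^n\bigr\},
\]
which automatically contains $(S^1)^m$. Taking quotients in stages yields
\[
X(\De)/T^n \;=\; U(\Si)/\la^{-1}(T^n) \;\cong\; U_+(\Si)/\Ker B,
\]
where $U_+(\Si):= U(\Si)/(S^1)^m \subset \R^m_{\ge 0}$ is the union over $I\in\Si$ of $U_+(I):=\{r\in\R^m_{\ge 0} : r_k>0 \text{ for } k\notin I\}$, and $\Ker B\subset \R^m_{>0}$ corresponds under $\log$ to the kernel of the linear map $B\colon \R^m\to\R^n$, $e_k\mapsto b_k$. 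Completeness of $(\Si,b)$ makes $B$ surjective, so $\Ker B$ is a connected subgroup of dimension $m-n$. For $I\in\Si$, the face $F_I\subseteq X(\De)/T^n$ is the image of $X(\De)_I = \bigcap_{i\in I}X(\De)_i$; equivalently, the image of $U_+(\Si)_I:=\{r\in U_+(\Si) : r_i=0 \text{ for all } i\in I\}$. Direct inspection shows $F_I\neq\emptyset$ iff $I\in\Si$, and $F_J\subset F_I$ iff $J\supset I$; connectedness of each $F_I$ is inherited from $X(\De)_I$ by Lemma~\ref{lemm:XI} applied via Lemma~\ref{lemm:cano}. This identifies the face poset with the opposite of $\Si$.

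For the contractibility of each $F_I$, I would first contract $U_+(\Si)_I$ to a point using the straight-line homotopy
\[
H(r,t) = t\,r + (1-t)\,\eta_I, \qquad (r,t)\in U_+(\Si)_I\times[0,1],
\]
where $\eta_I\in \R^m_{\ge 0}$ has $k$-th entry $1$ for $k\notin I$ and $0$ for $k\in I$. A short computation shows the zero set of $H(r,t)$ equals $I$ for $t\in[0,1)$ and equals $\{k:r_k=0\}$ for $t=1$, both elements of $\Si$, so $H$ never leaves $U_+(\Si)_I$ and provides a strong deformation retraction onto $\eta_I$. Next, the $\Ker B$-action on $U_+(\Si)_I$ by coordinatewise multiplication is free and proper: for $r$ with zero set $J\in\Si$, the stabilizer pulls back in $\log$ coordinates to $\{x\in\R^J : \sum_{j\in J} x_j b_j = 0\}$, which is zero by linear independence of $\{b_j\}_{j\in J}$ (topological fan axiom (1)); properness is automatic since $\Ker B$ is closed in $\R^m_{>0}$ acting on a closed subset of $\R^m_{\ge 0}$. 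Consequently $U_+(\Si)_I\to F_I$ is a principal $\Ker B$-bundle with contractible total space and contractible fiber $\Ker B\cong\R^{m-n}$, so the projection is a homotopy equivalence and $F_I$ is contractible.

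The main technical point is verifying that $H$ never leaves $U_+(\Si)_I$; the clean description of its zero set as either $I$ or the original zero set of $r$, both in $\Si$, makes this immediate. The other essential use of the topological fan axioms is the linear independence of $\{b_i\}_{i\in I}$ for $I\in\Si$, which is what makes the $\Ker B$-action free. Together these realize $U_+(\Si)_I \to F_I$ as a principal $\R^{m-n}$-bundle with contractible total space, from which contractibility of $F_I$ follows without any polytopality assumption on $(\Si,b)$.
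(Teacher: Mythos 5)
Your overall strategy is a genuinely different take from the paper's: you realize the orbit space as $U_+(\Si)/\Ker B$, contract the upstairs space $U_+(\Si)_I$ by a straight-line homotopy, and then try to descend via a principal-bundle argument. (The paper instead constructs an explicit embedding $\psi\colon X(\De)/(S^1)^n\hookrightarrow[0,1]^m$ from local maps $\psi_I$, checks they glue, and observes the image is a cone with apex $(1,\dots,1)$.) The straight-line homotopy and the identification of the face poset are clean and correct, and the freeness computation using $\ker B\cap \R^J=0$ is exactly right. However, there is a real gap in the step where you descend contractibility to the quotient.

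The claim that properness is ``automatic since $\Ker B$ is closed in $\R^m_{>0}$ acting on a closed subset of $\R^m_{\ge 0}$'' is not a valid inference: a closed subgroup of a non-compact group acting on a (locally) closed set need not act properly. (Moreover $U_+(\Si)_I$ is not even closed in $\R^m_{\ge 0}$ --- e.g.\ for $\Si=\partial\De^2$ and $I=\{1\}$, the set $U_+(\Si)_{\{1\}}$ omits the origin, which is in its closure.) Note also that $H$ is not $\Ker B$-equivariant ($g\,\eta_I\ne\eta_I$ for $g\ne 1$), so the homotopy cannot be pushed down directly; you really need the bundle structure. Properness does in fact hold, but establishing it requires more than you write: the natural route is to observe that for each $J\in\Si^{(n)}$ containing $I$, the projection $\ker(B)\to\R^{[m]\setminus J}$ is an isomorphism (because $\{b_j\}_{j\in J}$ is a basis of $\R^n$), which furnishes a continuous $\Ker B$-equivariant retraction of $U_+(J)$ onto the slice $\{r\in U_+(J):r_k=1 \text{ for }k\notin J\}$, i.e.\ an explicit local section and local trivialization. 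One then has to check these local sections agree on overlaps $U_+(J)\cap U_+(J')$ --- but this is precisely the content of the ``Claim'' in the paper's proof (that $\psi_J=\psi_{J'}$ on the overlap), and it uses the same computation $\langle a^{J'}_k,b_j\rangle=\delta_{kj}$ that you would need. So after the gap is filled your argument converges substantially with the paper's: the local-section/gluing work is shared, and what differs is only the last step (cone with apex $(1,\dots,1)$ versus straight-line homotopy upstairs plus principal-bundle descent).
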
 

\begin{proof}
We will give a cubical complex structure which is introduced in \cite[Construction 4.8]{bu-pa02} to the orbit space $X(\De )/(S^1)^n$ using the decomposition 
\[
X(\De)/(S^1)^n=\bigcup_{I\in\Si^{(n)}}\overline{\varphi_I^{-1}(\chi_I(C_I))}/(S^1)^n
\]
induced from the decomposition \eqref{C*de}.  
For $I \in \Si ^{(n)}$, we define a map 
$$\psi _I \colon \overline{\varphi_I^{-1}(\chi_I(C_I))}/(S^1)^n \to [0,1]^m$$ 
as follows.  For a point $p$ in $\overline{\varphi_I^{-1}(\chi_I(C_I))}$ we denote its image in the quotient $\overline{\varphi_I^{-1}(\chi_I(C_I))}/(S^1)^n$ by $[p]$.  Then the $k$-th coordinate of $\psi_I([p])$ is defined to be  
\begin{equation*}
	\begin{cases}
		|\varphi_I(p)_k| & \text{ if $k\in I$},\\
		1 & \text{ if $k \notin I$}.
	\end{cases}
\end{equation*}

\medskip
\noindent
{\bf Claim.} $\psi _I = \psi _J$ on $\overline{\varphi_I^{-1}(\chi_I(C_I))}/(S^1)^n \cap \overline{\varphi_J^{-1}(\chi_J(C_J))}/(S^1)^n$ for any $I,J\in\Si^{(n)}$.

\medskip  
It follows from \eqref{logCI} that $-\log|C_I\cap C_J|=\angle b_{I\cap J}$. This means that if $g\in C_I\cap C_J$, then $\langle a^I_i,-\log |g|\rangle=\langle a^J_j,-\log |g|\rangle=0$, equivalently $|g|^{a^I_i}=|g|^{a^J_j}=1$, again equivalently $|\chi^{\alpha^I_i}(g)|=|\chi^{\alpha^J_j}(g)|=1$, for $i\in I\backslash J$ and $j\in J\backslash I$.  This implies that    
\begin{equation} \label{CICJ}
\begin{split}
	&\overline{\varphi_I^{-1}(\chi_I(C_I))}\cap \overline{\varphi_J^{-1}(\chi_J(C_J))}\\
=\ &\{ p \in X(\De )\mid |\varphi_I(p)_i|=|\varphi_J(p)_j|=1 \text{ for any $i\in I \setminus J$ and $j \in J \setminus I$}\} .
\end{split}
\end{equation}
Therefore, if  $k \in I \cap J$, then it follows from \eqref{tran} and \eqref{CICJ} that 
\[
	|\varphi_J(p)_k| = \prod _{i\in I}|\varphi_I(p)_i|^{\langle a^J_k,b_i\rangle}=\prod_{j\in I\cap J}|\varphi_I(p)_j|^{\langle a^J_k,b_j\rangle} =|\varphi_I(p)_k|
\]
for $p \in \overline{\varphi_I^{-1}(\chi_I(C_I))}\cap \overline{\varphi_J^{-1}(\chi_J(C_J))}$, where the last equality above follows from the fact $\langle a^J_k,b_j\rangle=\delta_{kj}$.  This proves the claim. 

It follows from the claim above that the map 
\begin{equation*}
	\psi:=\bigcup _{I \in \Si ^{(n)}}\psi _I : X(\De )/(S^1)^n \to [0,1]^m
\end{equation*}
is well-defined.  Moreover, this map is an into homeomorphism and the image is determined by $\Si$.  Since each $\psi _I\big(\overline{\varphi_I^{-1}(\chi_I(C_I))}/(S^1)^n\big)$ is a cone with the cone point $(1,\dots ,1) \in [0,1]^m$, so is $X(\De)/(S^1)^n$ and hence contractible.  

Through the map $\psi$, a face of $X(\De)/(S^1)^n$ can be described as 
\begin{equation*}
\psi(X(\De )/(S^1)^n) \cap \{ x_j = 0 \mid j \in J\}
\end{equation*}
for some $J \in \Si $ where $x_j$ denotes the $j$-th coordinate of $[0,1]^m$. This shows that not only $X(\De)/(S^1)^n$ is contractible but also all proper faces of $X(\De)/(S^1)^n$ are contractible.  Moreover it shows that the face poset of $X(\De )/(S^1)^n$ coincides with the inverse poset of $\Si$.
\end{proof}

\begin{theo} \label{theo:homeo}
Let $\De=(\Si,\beta)$ be a complete non-singular topological fan of dimension $n$.  Then the $(S^1)^n$-equivariant homeomorphism type of $X(\De)$ does not depend on the first components $b+\sqrt{-1}c$ of $\beta=(b+\sqrt{-1}c,v)\colon \Si^{(1)}\to \MR^n=\RZRn$.  
\end{theo}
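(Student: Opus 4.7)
The plan is to identify both $X(\De)$ and $X(\De')$ with a common $(S^1)^n$-space constructed from $(\Si,v)$ alone, namely the quotient of the Buchstaber--Panov moment-angle complex $Z_\Si$ by a compact subgroup $K_v\subset (S^1)^m$ determined by $v$.

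Two simple observations set up the model. First, since $|h_k|=1$ for $h\in(S^1)^m$, the defining formula collapses to $h_k^{\beta_k}=h_k^{v_k}$; therefore the restriction $\la|_{(S^1)^m}$ of the map \eqref{defl} depends only on $v$, and the compact subgroup
\[
K_v:=\Ker(\la|_{(S^1)^m})\subset (S^1)^m
\]
depends only on $v$. In particular $K_v\subset\Ker\la$ regardless of the first components. Second, the moment-angle complex
\[
Z_\Si:=\{z\in(D^2)^m : \{k : |z_k|<1\}\in\Si\}\subset U(\Si)
\]
is a compact $(S^1)^m$-invariant subspace depending only on $\Si$. The composite $Z_\Si\hookrightarrow U(\Si)\to U(\Si)/\Ker\la=X(\De)$ is $K_v$-invariant and descends to a continuous $(S^1)^n$-equivariant map
\[
\Phi_\De\colon Z_\Si/K_v\longrightarrow X(\De).
\]

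The heart of the proof is to show $\Phi_\De$ is a homeomorphism, and by compactness of the source and Hausdorffness of the target (Lemma~\ref{lemm:haus}) this reduces to bijectivity. For injectivity, suppose $z,z'\in Z_\Si$ and $z'=hz$ with $h\in\Ker\la$. Writing $s=\log|h|\in\R^m$, we have $Bs=0$. Set $J_+:=\{k : s_k>0\}$; for $k\notin I_z:=\{\ell : |z_\ell|<1\}$ we have $|z_k|=1$, so the requirement $|z'_k|=e^{s_k}\leq 1$ forces $J_+\subset I_z\in\Si$, and similarly $J_-:=\{k : s_k<0\}\subset I_{z'}\in\Si$. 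Splitting $Bs=0$ as $\sum_{J_+}s_k b_k=-\sum_{J_-}s_k b_k$ puts a common vector into $\angle b_{J_+}\cap\angle b_{J_-}=\angle b_{J_+\cap J_-}=\{0\}$; since $\{b_k\}_{k\in J_+}$ are linearly independent, this forces $s_k=0$ for $k\in J_+$, whence $J_+=\emptyset$ and likewise $J_-=\emptyset$. Thus $s=0$, so $h\in (S^1)^m\cap\Ker\la=K_v$. For surjectivity, given $z\in U(\Si)$, the completeness of $(\Si,b)$ guarantees the existence of $s\in\Ker B$ with $s_k+\log|z_k|\leq 0$ for every $k$ with $z_k\neq 0$ and whose active index set $\{k:z_k=0\}\cup\{k:z_k\neq 0,\ s_k+\log|z_k|<0\}$ lies in $\Si$, by a standard moment-polytope inversion argument; the surjectivity of $V$ (from non-singularity of $v$) then solves the angular equation $V\psi=-Cs$ in $\R^n$, yielding $h\in\Ker\la$ with $hz\in Z_\Si$.

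Since $Z_\Si/K_v$ depends only on $(\Si,v)$, applying the same construction to $\De'$ gives $X(\De')\cong Z_\Si/K_v$ as $(S^1)^n$-spaces, hence $X(\De)\cong X(\De')$ equivariantly. The principal obstacle is the surjectivity step of the bijectivity claim, where the completeness of the underlying ordinary fan $(\Si,b)$ must be exploited to invert a moment-map--like condition; the injectivity argument, although delicate, uses only the cone-intersection and linear-independence axioms of the fan.
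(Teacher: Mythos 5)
Your proof is correct in outline but takes a genuinely different route from the paper. The paper's proof realizes $X(\De)$ as the Davis--Januszkiewicz identification space $(S^1)^n\times Q/{\sim}$, embedding the orbit space $Q=X(\De)/(S^1)^n$ in $X(\De)$ via the non-negative real locus $\bigcup_I\varphi_I^{-1}((\R_{\geq 0})^I)$ of the charts and observing that the identification $(g,x)\sim(g',x)$ is governed by isotropy groups depending only on $v$ and the face containing $x$; the input that $Q$ and its face structure depend only on $\Si$ comes from Lemma~\ref{lemm:faces}. You instead realize $X(\De)$ as the Buchstaber--Panov homogeneous model $Z_\Si/K_v$ and check directly that both ingredients depend only on $(\Si,v)$, which bypasses Lemma~\ref{lemm:faces} altogether. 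Your injectivity argument---splitting $s=\log|h|$ into its positive and negative supports $J_\pm$, showing $J_+\subset I_z\in\Si$ and $J_-\subset I_{z'}\in\Si$, then invoking $\angle b_{J_+}\cap\angle b_{J_-}=\angle b_{J_+\cap J_-}=\{0\}$ and the linear independence of $\{b_k\}_{k\in J_+}$---is complete and correct. The one soft spot is surjectivity, which you dispatch with ``a standard moment-polytope inversion argument''; completeness of $(\Si,b)$ is indeed the right ingredient, but the step deserves detail. The cleanest way to close it within the paper's framework is to use the decomposition $X(\De)=\bigcup_{I\in\Si^{(n)}}\overline{\varphi_I^{-1}(\chi_I(C_I))}$ from the proof of Lemma~\ref{lemm:compact} (exactly where completeness enters): if $\varphi_I([z])$ lies in the closed unit polydisk of $\C^I$, then applying the section $\tilde f_I$ from Section~\ref{sect:5} (put the coordinates outside $I$ equal to $1$) produces a representative of $[z]$ lying in $Z_\Si$. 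With that addition the proof is complete, and it has the independent merit of exhibiting the explicit common model $Z_\Si/K_v$, which the paper's proof does not make visible.
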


\begin{proof}
	For each $I \in \Si ^{(n)}$, we regard $(\R _{\geq 0})^I$ as a closed subset of $\C ^I$. 
	Then, the quotient space $X(\De )/(S^1)^n$ can be regarded as a subset of $X(\De )$ by 
	\begin{equation*}
		X(\De )/(S^1)^n = \bigcup _{I \in \Si ^{(n)}}\varphi _I^{-1}((\R _{\geq 0})^n)
	\end{equation*}
and hence the map 
$$f \colon (S^1)^n \times (X(\De)/(S^1)^n) \to X(\De )$$ 
defined by $f(g,x) = gx$ makes sense.   If  $f(g,x) = f(g',x')$ for $(g,x), (g', x') \in (S^1)^n \times (X(\De )/(S^1)^n)$, then $x=x'$ and $g'g^{-1}x = x$, that is, $g'g^{-1}$ is an element of the isotropy subgroup of $(S^1)^n$ at $x$. Since the isotropy subgroup at $x$ is determined by the face $F$ containing $x$ in its relative interior and the second component $v$ of $\beta$, the $(S^1)^n$-equivariant homeomorphism type does not depend on the first component of $\beta$.
\end{proof}

\begin{rema}
It is unclear that Theorem~\ref{theo:homeo} holds even for the $(S^1)^n$-equivariant \emph{diffeomorphism} type of $X(\De)$ although one can see that the $(S^1)^n$-equivariant diffeomorphism type of $X(\De)$ remain unchanged through a continuous perturbation of the first component $b+\sqrt{-1}c$ of $\beta$ (with $\Si$ and $v$ fixed).   
\end{rema}

\section{Classification of topological toric manifolds} \label{sect:8}

We have associated a complete non-singular topological fan $\De(X)$ to an omnioriented topological toric manifold $X$ in Section~\ref{sect:3}.  Remember that omnioriented topological toric manifolds $X$ and $Y$ are isomorphic if there is an equivariant diffeomorphism preserving the omniorientations and if they are isomorphic, then the topological fans $\De(X)$ and $\De(Y)$ associated with $X$ and $Y$ are equivalent, that is, $[\De(X)]=[\De(Y)]$, see  Lemma~\ref{lemm:iso}.  Conversely we have constructed an omnioriented topological toric manifold $X(\De)$ to a complete non-singular topological fan $\De$ in Sections~\ref{sect:4}, \ref{sect:5} and \ref{sect:6}.   So we have two correspondences in opposite directions: 
\[
\begin{split}
&\{\text{Omnioriented topological toric manifolds of dimension $2n$}\}\\
&\rightleftarrows\ \{\text{Complete, non-singular topological fans of dimension $n$}\}
\end{split}
\]
where we do not distinguish between isomorphic omnioriented topological toric manifolds and also between equivalent topological fans.  We shall denote the correspondence $\rightarrow$ (resp. $\leftarrow$) above by $\MD$ (resp. $\MX$).  

\begin{theo} \label{theo:class}
The correspondences $\MD$ and $\MX$ are inverses to each other, so they are both bijections. 
\end{theo}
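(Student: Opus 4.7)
The direction $\mathcal{D} \circ \mathcal{X} = \operatorname{id}$ is already available: given a complete non-singular topological fan $\Delta$, Lemma~\ref{lemm:cano} records that the omnioriented topological toric manifold $X(\Delta)$ has $\Delta(X(\Delta)) = \Delta$. So the substantive task is to prove $\mathcal{X} \circ \mathcal{D} = \operatorname{id}$: given an omnioriented topological toric manifold $X$, one must construct an isomorphism $f \colon X(\Delta(X)) \to X$ of omnioriented topological toric manifolds.

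The plan is to build $f$ by glueing local equivariant diffeomorphisms indexed by $I \in \Sigma^{(n)}(X)$. Fix once and for all a base point $\iota$ in the open dense orbit $O$ of $X$; this identifies $O$ with $(\mathbb{C}^*)^n$ via $g \mapsto g\iota$. On the $X(\Delta)$ side the open dense orbit also has a canonical identification with $(\mathbb{C}^*)^n$ (coming from the image of $[1,\dots,1]$ under the $(\mathbb{C}^*)^n$-action after taking the quotient by $\ker\lambda$). This produces a canonical $(\mathbb{C}^*)^n$-equivariant map $f_O$ between the two open dense orbits.

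Next, for each $I \in \Sigma^{(n)}(X)$ let $p_I$ denote the unique fixed point $X_I$ (Lemma~\ref{lemm:XI}) and let $O_{p_I}$ be the invariant chart around it. By Lemma~\ref{lemm:lindep}, there is a $(\mathbb{C}^*)^n$-equivariant diffeomorphism $\psi_I \colon O_{p_I} \to V(\bigoplus_{i\in I}\chi^{\alpha_i^I(X)})$, which is unique once we require $\psi_I(\iota) = (1,\dots,1)$ (faithfulness of the representation forces an equivariant self-map fixing a point of the dense orbit to be the identity). Meanwhile, Lemma~\ref{lemm:AIUI} supplies a $(\mathbb{C}^*)^n$-equivariant homeomorphism $\varphi_I \colon U(I)/\ker\lambda \to V(\bigoplus_{i\in I}\chi^{\alpha_i^I})$ for the topological toric manifold $X(\Delta(X))$, and the $\alpha_i^I$'s on the two sides coincide because they are dual to the same $\{\beta_i\}_{i\in I} = \{\beta_i(X)\}_{i\in I}$. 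Set $f_I := \psi_I^{-1} \circ \varphi_I$. The key gluing step is the observation that $f_I$ and $f_J$ both restrict to $f_O$ on the open dense orbit contained in their common domain; since $X$ is Hausdorff and the open dense orbit is dense in $(U(I)\cap U(J))/\ker\lambda$, continuity forces $f_I = f_J$ on overlaps. Thus the $f_I$'s paste to a continuous $(\mathbb{C}^*)^n$-equivariant map $f \colon X(\Delta(X)) \to X$.

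To finish I would check that $f$ is bijective, smooth, and preserves the omniorientation. Each $f_I$ is a diffeomorphism onto $O_{p_I}$, so $f$ is a local diffeomorphism; since $X(\Delta(X))$ is compact and $X$ is a connected manifold covered by the $O_{p_I}$, $f$ is a covering map, and because its restriction to the open dense orbit is a homeomorphism its degree is one, forcing $f$ to be a diffeomorphism. The characteristic submanifold $X(\Delta(X))_i$ is mapped onto $X_i$ because both are characterised as the fixed locus of the $\mathbb{C}^*$-subgroup $\lambda_{\beta_i}(\mathbb{C}^*)$ and $f$ is equivariant; the orientation on each is determined via \eqref{norm} by the same $\beta_i$, so the omniorientation is preserved. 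The main obstacle I expect is the glueing step: establishing that the locally defined charts are mutually compatible, which I handle by the uniqueness of equivariant extensions from the dense orbit combined with Hausdorffness. Everything else is then a formal consequence of the matching local models and the canonical identification with the dense $(\mathbb{C}^*)^n$-orbit.
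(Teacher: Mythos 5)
Your proof is correct and takes essentially the same approach as the paper's: both hinge on fixing a base point $\iota$ in the open dense orbit, normalizing the local charts around fixed points so that they send $\iota$ to $(1,\dots,1)$ in the representation model determined by $\De(X)$, and then using equivariance plus density of the open orbit to force the transition functions (equivalently, the glued map $f$) to be uniquely determined. The paper phrases this as injectivity of $\MD$ and leaves the Hausdorffness/degree-one bookkeeping implicit, while you spell out the explicit isomorphism $X(\De(X))\to X$ and its gluing and covering-map properties, but the core argument is identical.
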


\begin{proof}
Lemma~\ref{lemm:cano} says that the composition $\MD\circ\MX$ is the identity.  Therefore, it suffices to prove that $\MD$ is injective.  

Let $X$ be an omnioriented topological toric manifold of dimension $2n$.  As observed in Lemma~\ref{lemm:lindep}, the associated topological fan $\De(X)$ determines the complex $n$-dimensional representation space $V_p$  for each fixed point $p$ in $X$ such that there is an equivariant diffeomorphism to the invariant open neighborhood $O_p$ preserving the omniorientations, where the omniorientation on $V_p$ is the one induced from the complex structure on $V_p$ and that on $O_p$ is the one induced from $X$.  Choose a point $\iota$ in the open dense orbit in $X$.  We may assume that the omniorientation preserving equivariant diffeomorphism $O_p\to V_p$, denoted by $\varphi_p$, sends $\iota$ to the point $1_p:=(1,\dots,1)\in V_p(=\C^n)$ for each $p$ if necessary by composing an automorphism of $V_p$ given by an element of $(\C^*)^n$.  Therefore, the transition function $\varphi_q\circ \varphi_p^{-1}\colon \varphi _p(O_p\cap O_q) \to \varphi _q(O_p\cap O_q)$ for $q\in X^{(\C^*)^n}$ maps $1_p$ to $1_q$.  Since the transition function is equivariant and defined on a dense subset of $V_p$, such a map is unique.  This shows that $X$ is determined by the associated topological fan $\De(X)$, which means the injectivity of $\MD$. 
\end{proof}

\begin{coro} \label{coro:class2}
Two omnioriented topological toric manifolds $X$ and $Y$ are equivariantly diffeomorphic {\rm(}resp. equivariantly homeomorphic{\rm)} if and only if $[\De(X)]_D=[\De(Y)]_D$ {\rm(}resp. $[\De(X)]_H=[\De(Y)]_H${\rm)}. 
\end{coro}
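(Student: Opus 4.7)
The plan is to handle the two directions separately. The only-if direction is immediate from Lemma~\ref{lemm:iso}. For the converse, I would apply Theorem~\ref{theo:class} to reduce to the case $X=X(\De)$ and $Y=X(\De')$ for complete non-singular topological fans $\De=(\Si,\beta)$ and $\De'=(\Si',\beta')$, and then produce an equivariant map directly through the quotient construction of Section~\ref{sect:4}. After relabelling vertices via the simplicial isomorphism $\sigma$ in the definition of D- or H-equivalence, I may assume $\Si=\Si'$ with common vertex set $[m]$ and $\beta'_i=\beta_i\mu_i$ for every $i\in[m]$, with $\mu_i\in\{\1,\mu_0\}$ in the D-case and $\mu_i\in\MS$ in the H-case.

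Next I would define a map $\tilde F\colon \C^m\to\C^m$ coordinatewise by $\tilde F(z_1,\dots,z_m):=(z_1^{\mu_1^{-1}},\dots,z_m^{\mu_m^{-1}})$, noting that each $\mu_i\in\MS$ has an inverse in $\MS$. By Lemma~\ref{lemm:gmu}(1) each coordinate map extends continuously to a self-homeomorphism of $\C$, and is $\R$-linear (hence smooth) in the D-case where $\mu_i^{-1}\in\{\1,\mu_0\}$. Since $\tilde F$ preserves the vanishing pattern of coordinates, it restricts to a self-homeomorphism of $U(\Si)$. In parallel I would define the group automorphism $\tilde G\colon (\C^*)^m\to(\C^*)^m$ by the same formula, and observe via \eqref{expo} that $\tilde F$ is $(\C^*)^m$-equivariant relative to $\tilde G$.

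The central calculation is to verify $\la'\circ\tilde G=\la$. This reduces, by another use of \eqref{expo}, to the identity $\la_{\beta_k\mu_k}(g)=\la_{\beta_k}(g^{\mu_k})$; substituting $g=h_k^{\mu_k^{-1}}$ then collapses $\la'(\tilde G(h))$ to $\la(h)$. This identity guarantees that $\tilde G$ sends $\Ker\la$ bijectively onto $\Ker\la'$ and descends to the \emph{identity} automorphism on $(\C^*)^m/\Ker\cong(\C^*)^n$. Therefore $\tilde F$ descends to an $(\C^*)^n$-equivariant homeomorphism $F\colon X(\De)\to X(\De')$, which is a diffeomorphism in the D-case since $\tilde F$ is then smooth and compatible with the local smooth charts $\varphi_I$ from Section~\ref{sect:5}.

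The main obstacle, and really the only substantive verification, is the identity $\la'\circ\tilde G=\la$ that ensures identity equivariance rather than equivariance twisted by some nontrivial automorphism of $(\C^*)^n$. If the twist were nontrivial one would need to correct by an additional automorphism of $(\C^*)^n$, effectively reopening the case analysis already encoded in the D/H-equivalence relation. Everything else is a direct unravelling of the quotient construction of Section~\ref{sect:4} combined with the local model provided by Lemma~\ref{lemm:repclass}.
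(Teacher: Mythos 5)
Your proof is correct and takes a genuinely different, more constructive route than the paper. The paper never builds an explicit map: for the D-case it observes that reversing an orientation on a characteristic submanifold $X_i$ leaves the underlying $(\C^*)^n$-manifold unchanged while replacing $\beta_i(X)$ by $\beta_i(X)\mu_0$, so one can reorient $X$ until $[\De(X)]=[\De(Y)]$ and then invoke Theorem~\ref{theo:class}; for the H-case it argues somewhat briefly that one can "change local coordinates through equivariant homeomorphism" using Lemma~\ref{lemm:repclass} to produce an $X'$ equivariantly homeomorphic to $X$ with $[\De(X')]=[\De(Y)]$, and again applies Theorem~\ref{theo:class}. You instead pass to the quotient models $X(\De)$, $X(\De')$ and exhibit the equivariant homeomorphism (resp.\ diffeomorphism) directly as the descent of the coordinatewise map $\tilde F(z)=(z_i^{\mu_i^{-1}})_i$ on $U(\Si)$, paired with the group automorphism $\tilde G$. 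The key computation $\la'\circ\tilde G=\la$ is carried out correctly (using $\la_{\beta_k\mu_k}(g)=\la_{\beta_k}(g^{\mu_k})$, a direct consequence of \eqref{expo}), and you correctly note this forces $\tilde G(\Ker\la)=\Ker\la'$ and that $\tilde G$ descends to the identity of $(\C^*)^n$, so $F$ is equivariant without any residual twist. The trade-off: the paper's reorientation trick for the D-case is shorter, but your direct construction is more transparent, and in the H-case it is arguably tighter than the paper's "change local coordinates" argument, which is left at the level of an indication.
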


\begin{proof}
The \lq\lq only if" part is proved by Lemma~\ref{lemm:iso}.  Suppose $[\De(X)]_D=[\De(Y)]_D$.  Then, changing the orientations on the characteristic submanifolds of $X$ if necessary, we may assume that $[\De(X)]=[\De(Y)]$.  Then it follows from Theorem~\ref{theo:class} that $X$ (with the changed omniorientation) and $Y$ are isomorphic, in particular they are equivariantly diffeomorphic.

Now suppose $[\De(X)]_H=[\De(Y)]_H$.  Then, changing the local coordinates of $X$ and $Y$ through equivariant homeomorphism, one sees that there is a topological toric manifold $X'$ such that $X'$ is equivariantly homeomorphic to $X$ and $[\De(X')]=[\De(Y)]$ by Lemma~\ref{lemm:repclass}.  Then $X'$ and $Y$ are isomorphic by Theorem~\ref{theo:class} and hence $X$ and $Y$ are equivariantly homeomorphic.      
\end{proof}

Remember that the orbit space $X/(S^1)^n$ of a topological toric manifold $X$ by the restricted action of $(S^1)^n$ is a manifold with corners.  Since we may assume that $X$ is of the form $X(\De)$ by Theorem~\ref{theo:class}, it follows from Lemma~\ref{lemm:faces} that every face of $X/(S^1)^n$ (even $X/(S^1)^n$ itself) is contractible and any intersection of faces is connected unless it is empty.  Therefore the following proposition follows from Corollary 7.8 and Theorem 8.3 in \cite{ma-pa06}. 

\begin{prop} \label{prop:cohom}
Let $X$ be an omnioriented topological toric manifold and let $\De(X)=(\SiX, \beta(X))$ be the complete non-singular topological fan associated with $X$.  We may assume that the vertex set $\Si^{(1)}(X)$ of $\Si(X)$ is $[m]$ and express $\beta(X)(i)=(b_i(X)+\sqrt{-1}c_i(X),v_i(X))\in \RZRn$ for $i\in [m]=\Si^{(1)}(X)$.  Then 
\[
H^*(X;\Z)=\Z[\mu_1,\dots,\mu_m]/\MI
\]
where $\mu_i\in H^2(X;\Z)$ and $\MI$ is the ideal generated by the following two types of elements:
\begin{enumerate}
\item $\prod_{i\in I} \mu_i$ \quad for $I\notin \SiX$,
\item $\sum_{i=1}^m\langle u,v_i(X)\rangle \mu_i$ \quad for any $u\in \Z^n$.
\end{enumerate}
Here $\mu_i$ is actually the Poincar\'e dual of the characteristic submanifold $X_i$ in $X$.    
\end{prop}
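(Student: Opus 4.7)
The plan is to reduce the statement to the main result of Masuda--Panov \cite{ma-pa06}. First, by Theorem~\ref{theo:class} we may assume $X=X(\De)$ for $\De=\De(X)$, so that we have the explicit model from Section~\ref{sect:4} and the local charts from Corollary~\ref{coro:XDelta} at our disposal. In particular, the restricted action of $(S^1)^n$ is locally standard: near each fixed point it is (by Lemma~\ref{lemm:lindep} after passing to the $(S^1)^n$-slice) the standard coordinatewise action on $\C^n$, with isotropy along the characteristic submanifold $X_i$ generated by the circle subgroup whose Lie algebra is spanned by $v_i(X)$. Thus $X$ is a torus manifold in the sense of \cite{ha-ma03}, and $v_i(X)$ is precisely the characteristic function at the facet of $Q:=X/(S^1)^n$ corresponding to $X_i$.

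The key geometric input is now Lemma~\ref{lemm:faces}, which says that $Q$ is a manifold with corners whose every face (including $Q$ itself) is contractible, with face poset opposite to $\Si(X)$. This is exactly the face-acyclicity hypothesis under which \cite[Cor.~7.8]{ma-pa06} identifies the equivariant cohomology with the Stanley--Reisner face ring,
\[
H^*_{(S^1)^n}(X;\Z)\ \cong\ \Z[\Si(X)]\ =\ \Z[\mu_1,\dots,\mu_m]\Big/\Big(\prod_{i\in I}\mu_i \ :\ I\notin\Si(X)\Big),
\]
where $\mu_i$ is the equivariant Poincar\'e dual (equivalently the equivariant Thom class) of $X_i$. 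Then \cite[Thm.~8.3]{ma-pa06} shows that the Serre spectral sequence of the Borel fibration $X\hookrightarrow E(S^1)^n\times_{(S^1)^n}X\to B(S^1)^n$ degenerates at $E_2$, so that $H^*(X;\Z)$ is the quotient of $H^*_{(S^1)^n}(X;\Z)$ by the ideal generated by the image of $H^2(B(S^1)^n;\Z)$. Under the face-ring description, the element of $H^2(B(S^1)^n;\Z)$ dual to $u\in\Z^n$ maps to $\sum_{i=1}^m\langle u,v_i(X)\rangle\mu_i$, because this is the linear combination of equivariant Thom classes one reads off from the characteristic function. This produces the relations of type~(2), while the nonequivariant $\mu_i$ retain their description as Poincar\'e duals of $X_i$.

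The main obstacle is bookkeeping rather than novelty: one must verify that the pair $(X,(S^1)^n)$ meets the precise hypotheses of \cite[Cor.~7.8, Thm.~8.3]{ma-pa06}, and in particular that the ``real'' data $b_i+\sqrt{-1}c_i$ of $\beta_i(X)$ plays no role in the cohomology. This is consistent with Theorem~\ref{theo:homeo}, which says that the $(S^1)^n$-equivariant homeomorphism type of $X(\De)$ depends only on $\Si$ and $v$; since Lemma~\ref{lemm:faces} applies uniformly to any complete non-singular topological fan, both the face-acyclicity of $Q$ and the characteristic function $v_i(X)$ are $(S^1)^n$-invariants, so Masuda--Panov applies directly with the stated answer.
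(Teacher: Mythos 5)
Your proposal is correct and follows essentially the same route as the paper: reduce to $X=X(\De)$ via Theorem~\ref{theo:class}, invoke Lemma~\ref{lemm:faces} for the contractibility of the faces of the orbit space (and the matching of the face poset with $\Si(X)$, which gives the connected-intersection condition), and then apply Corollary~7.8 and Theorem~8.3 of Masuda--Panov. Your unpacking of what those cited results say (equivariant cohomology is the face ring, Serre spectral sequence degenerates, relations of type (2) arise from the image of $H^2(B(S^1)^n)$) is a faithful elaboration of the same argument, not a different one.
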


\begin{rema} 
Since the homeomorphism type of $X$ does not depend on the $b_i(X)+\sqrt{-1}c_i(X)$'s by Theorem~\ref{theo:homeo}, they should not appear in the expression of $H^*(X;\Z)$ as above.  When $X$ is a toric manifold, Proposition~\ref{prop:cohom} is well-known as Jurkiewicz-Danilov's Theorem (see p.134 in \cite{oda88}).    
\end{rema}

As mentioned in Proposition~\ref{prop:cohom}, $\mu_i$ is the Poincar\'e dual of $X_i$ in $X$.  On the other hand, we may assume $X=X(\De)$ by Theorem~\ref{theo:class}, where $\De=\De(X)$.  Then it is not difficult to see that the complex line bundle $L_i$ over $X(\De)$ in Theorem~\ref{theo:bundle} is an extension of the normal bundle $\nu_i$ of $X_i=X(\De)_i$ as real vector bundles.  This implies that the total Pontrjagin class of $L_i$ is given by $1+\mu_i^2$.  Thus the following proposition follows from Theorem~\ref{theo:bundle}. 

\begin{prop} \label{prop:pont}
Let the situation and the notation be the same as in Proposition~\ref{prop:cohom}.  Then the total Pontrjagin class of $X$ is given by 
$\prod_{i=1}^m(1+\mu_i^2)$.
\end{prop}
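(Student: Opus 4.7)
The plan is to deduce the formula directly from the short exact sequence of real $(\C^*)^n$-vector bundles in Theorem~\ref{theo:bundle} together with the identity $c_1(L_i)=\mu_i$. By Theorem~\ref{theo:class} one may assume $X=X(\De)$ with $\De=\De(X)$. Since every short exact sequence of smooth real vector bundles admits a smooth splitting, Theorem~\ref{theo:bundle} provides an isomorphism $\tau X\oplus\uC^{m-n}\cong\bigoplus_{i=1}^m L_i$ of smooth real vector bundles. Trivial summands do not alter Pontrjagin classes, so $p(X)=p\bigl(\bigoplus_{i=1}^m L_i\bigr)$.

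Next I will compute this class via complexification. For a complex line bundle $L$ regarded as a real rank two bundle, one has $L\otimes_\R\C\cong L\oplus\bar L$, hence $c(L\otimes_\R\C)=1-c_1(L)^2$ and therefore $p(L)=1+c_1(L)^2$. Applying the identical argument to the entire Whitney sum, viewed as the underlying real bundle of the complex bundle $\bigoplus_{i=1}^m L_i$, one finds
$$p\Bigl(\bigoplus_{i=1}^m L_i\Bigr)=\prod_{i=1}^m\bigl(1+c_1(L_i)^2\bigr),$$
and this is an exact equality, not merely modulo $2$-torsion, because the computation reduces to multiplicativity of the total Chern class of a complex vector bundle.

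It remains to verify $c_1(L_i)=\mu_i$. The presentation $L_i=(U(\Si)\times V(\pi_i))/\Ker\la$ carries a tautological section $s_i\colon[(z_1,\dots,z_m)]\mapsto\bigl[(z_1,\dots,z_m),z_i\bigr]$, well-defined because the diagonal $\Ker\la$-action is equivariant in both factors. Its zero locus in $X(\De)$ is precisely $X(\De)_i=X_i$, the vanishing is transverse along the fiber direction, and the complex orientation on $L_i|_{X_i}$ matches the orientation on $\nu_i$ induced by the canonical omniorientation constructed at the end of Section~\ref{sect:6}. Consequently $c_1(L_i)$ equals the Poincar\'e dual of the oriented submanifold $X_i$, which is $\mu_i$ by Proposition~\ref{prop:cohom}.

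The main obstacle is the orientation bookkeeping in this last step, since the other two steps are standard characteristic class manipulations. What makes the bookkeeping essentially automatic is that the omniorientation on $X(\De)$ was defined using precisely the coordinate direction $z_i$ that furnishes the section $s_i$, so the two orientations on $\nu_i$ agree by construction and the identification $c_1(L_i)=\mu_i$ becomes tautological rather than requiring a separate verification. Assembling the three steps yields $p(X)=\prod_{i=1}^m(1+\mu_i^2)$.
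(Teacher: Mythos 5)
Your proof is correct and follows essentially the same route as the paper: reduce to $X=X(\De)$, use the short exact sequence of Theorem~\ref{theo:bundle}, identify $c_1(L_i)=\mu_i$ via the tautological section vanishing transversally on $X_i$, and multiply. You fill in two points the paper leaves implicit — that the Whitney product formula for Pontrjagin classes holds on the nose here because it factors through Chern classes of the complexification, and the explicit orientation matching between $L_i|_{X_i}$ and $\nu_i$ — but the argument is the same one.
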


\section{The Barnette sphere} \label{sect:9}

We say that an abstract simplicial complex $\Si$ is the underlying simplicial complex of a (topological) toric manifold $X$ if $\Si=\Si(X)$.  An abstract simplicial complex whose geometric realization is homeomorphic to $d$-sphere $S^{d}$ is called a \emph{simplicial $d$-sphere}.  If $\Si$ is the underlying simplicial complex of a topological toric manifold of dimension $2n$, then $\Si$ is a simplicial $(n-1)$-sphere.  However, the converse does not hold in general, see Proposition A.1 in the appendix.  

The boundary complex of a simplicial $n$-polytope is a simplicial $(n-1)$-sphere.  Such a simplicial sphere is called \emph{polytopal}.  It is known that any simplicial $d$-sphere is polytopal when $d\le 2$ but there is a non-polytopal simplicial $d$-sphere when $d\ge 3$.  D. Barnette (\cite{barn71}) found a non-polytopal simplicial $3$-sphere (now known as the \emph{Barnette sphere}) with $8$ vertices.  Any simplicial $3$-sphere with less than $8$ vertices is polytopal and there are exactly two non-polytopal simplicial $3$-spheres with $8$ vertices.  The Barnette sphere is one and the other one is called the Br\"uckner sphere, see \cite[p.143]{zieg98}.  

We recall the Barnette sphere, see \cite{barn71} for the details.  A $d$-diagram is a cell complex consisting of a collection of $d$-polytopes and their faces satisfying certain conditions.  Figure \ref{fig:Barnette_sphere} is a $3$-diagram consisting of $8$ vertices $e_1,e_2,e_3,e_4,d_1,d_2,d_3,d_4$ and $18$ 3-simplices which are No.1 - No.18 in Table~\ref{table1}.  The Barnette sphere is obtained by gluing another 3-simplex along the boundary of the 3-diagram, so that it has 19 3-simplices and the glued 3-simplex is No.19 in Table~\ref{table1}.  

\begin{figure}[h]
 \begin{center}
 \includegraphics[width=4cm,clip]{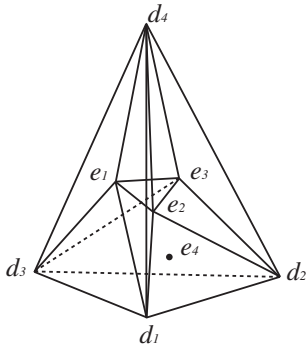}
\end{center}
\caption{3-diagram of the Barnette sphere}
\label{fig:Barnette_sphere}
\end{figure}

\begin{table}[htbp]
\begin{tabular}{|c|c|c||c|c|c||c|c|c||c|c|c|} \hline
No & simplex & sign & No & simplex & sign & No. & simplex & sign & No. & simplex & sign    \\ 
\hline \hline  1 &  $e_1 e_2 e_3 e_4$  &   $+$  & 6  &   $d_1 d_2 e_3 e_4$   &  $+$  & 11  &   $d_1 e_2 e_3 d_2$   &  $+$  &  15 &   $e_1 d_1 d_3 d_4$   &   $+$   \\
\hline          2 &  $d_1 e_2 e_3 e_4$ &   $-$  & 7  &   $e_1 d_2 d_3 e_4 $   &  $+$  & 12  &   $e_1 e_2 d_1 d_4$   &  $+$  &  16 &   $d_1 e_2 d_2 d_4$   &   $+$  \\
\hline          3 &  $e_1 d_2 e_3 e_4$ &   $-$  & 8  &   $d_1 e_2 d_3 e_4$   &  $+$  & 13  &   $e_1 d_3 e_3 d_4$   &  $+$  &  17 &   $d_3 d_2 e_3 d_4$   &   $+$   \\
\hline          4 &  $e_1 e_2 d_3 e_4$ &   $-$  & 9  &   $e_1 d_2 e_3 d_3$   &  $+$  & 14  &   $d_2 e_2 e_3 d_4$   &  $+$  &  18 &   $d_1 d_2 d_3 e_4$   &   $-$   \\
\hline          5 &  $e_1 e_2 e_3 d_4$ &   $-$  & 10 &  $e_1 e_2 d_3 d_1 $  &   $+$ &  \   &         \                       &  \ &  19 &   $d_1 d_2 d_3 d_4$  &    $+$  \\
\hline
\end{tabular}
\bigskip
\caption{3-simplices in the Barnette sphere}
\label{table1}
\end{table}

\begin{theo} \label{theo:barnette}
The Barnette sphere can be the underlying simplicial complex of a topological toric manifold but cannot be that of a toric manifold. 
\end{theo}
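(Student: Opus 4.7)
The theorem asks for two opposite things, and I would tackle them by genuinely different techniques, exploiting the fact that in a topological fan the real data $\b_i$ and the lattice data $\v_i$ can be chosen independently (whereas in an ordinary simplicial fan $\b_i=\v_i$).

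For the existence part, by Theorem~\ref{theo:class} it suffices to construct a complete non-singular topological fan $\De=(\Si,\beta)$ with $\Si$ the Barnette sphere. Writing $\beta_i=(\b_i+\sqrt{-1}c_i,\v_i)$, I would set $c_i=0$ and choose the $\b_i\in\R^4$ and $\v_i\in\Z^4$ separately. The 3-diagram of Figure~\ref{fig:Barnette_sphere} already realizes the Barnette sphere as the boundary of a star-shaped polyhedral ball; embedding this ball in a hyperplane of $\R^4$ and reading off position vectors from a chosen interior origin produces $\b_i$'s whose cones $\angle\b_I$ assemble into an ordinary complete simplicial fan. For the lattice vectors, I would normalize $\v_{e_1},\dots,\v_{e_4}$ to be the standard $\Z$-basis of $\Z^4$ and determine $\v_{d_1},\dots,\v_{d_4}$ inductively from the $\Z$-basis conditions imposed by the $19$ tetrahedra of Table~\ref{table1}, which constitute a finite system of unimodularity constraints in $16$ integer unknowns.

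For the non-existence part, suppose for contradiction that a toric manifold has the Barnette sphere as its underlying simplicial complex. The corresponding ordinary fan would supply integer vectors $\v_1,\dots,\v_8\in\Z^4$ satisfying \emph{both} conditions simultaneously with $\b_i=\v_i$: forming an honest complete simplicial fan in $\R^4$ and spanning a $\Z$-basis on every tetrahedron. After acting by $\GL(4,\Z)$ and relabeling, I may assume $\v_{e_1},\dots,\v_{e_4}$ is the standard basis of $\Z^4$. I then propagate the $\Z$-basis conditions through the tetrahedra of Table~\ref{table1} to pin the $\v_{d_j}$'s down to a finite list of candidates, and for each candidate tuple verify that some pair of top cones violates $\angle\v_I\cap\angle\v_J=\angle\v_{I\cap J}$. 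The signs recorded in Table~\ref{table1} serve as the bookkeeping tool identifying which pair of cones creates the offending overlap.

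The hard step is the non-existence direction. The obstruction must be subtler than non-polytopality, since non-projective complete smooth toric varieties exist, so a polytope-combinatorial argument alone cannot suffice. The crux is to extract from the simultaneous fan-plus-$\Z$-basis conditions enough rigidity to reduce a potentially infinite search for integer realizations to a finite case analysis, and then to organize that analysis around the signs of Table~\ref{table1} so that the incompatibility emerges at the closing tetrahedron No.~19. In the topological setting this rigidity disappears precisely because the $\b_i$'s are unconstrained by the lattice, which explains why the topological version succeeds while the algebraic version fails.
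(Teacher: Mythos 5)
Your existence argument is essentially the paper's: pick $\b_i$ from a star-shaped geometric realization of the 3-diagram (the paper cites Ewald for the fact that the Barnette sphere supports an ordinary complete fan over $\R$) and pick $\v_i\in\Z^4$ satisfying the unimodularity constraints. The paper simply exhibits explicit $\v(d_1),\dots,\v(d_4)$ rather than describing an inductive search, but the substance is the same.

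The non-existence half, however, has a genuine gap. You propose a two-stage argument: (i) use the $\Z$-basis (determinant $=\pm1$) conditions alone to cut the candidate $\v_{d_j}$'s down to a finite list, then (ii) for each candidate, find two top cones that overlap. Stage (i) cannot work as stated: the unimodularity conditions by themselves admit solutions — your own existence argument produces one — and in fact admit infinitely many (compose with lattice automorphisms fixing $\v_{e_1},\dots,\v_{e_4}$, or vary signs). What actually makes the search finite, and in the end empty, is the additional \emph{sign condition}: in a genuine simplicial fan, when two maximal cones share a facet, the corresponding $4\times4$ determinants must have \emph{opposite} signs, because the two cones lie on opposite sides of the hyperplane spanned by the shared facet. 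This is a local consequence of the non-overlap axiom, and it is far more tractable than checking $\angle\v_I\cap\angle\v_J=\angle\v_{I\cap J}$ globally. Once you fix $\v(e_i)$ to be the standard basis (so the determinant of tetrahedron No.\ 1 is $+1$), the sign condition propagates through the facet-adjacency graph and \emph{forces} the sign of every determinant in Table~\ref{table1}. These prescribed signs turn each determinant into an exact polynomial equation in the sixteen unknowns $d_{ij}$, and the paper shows those equations are already inconsistent over $\Z$. Notably, the contradiction appears \emph{before} tetrahedron No.\,19 is ever used — the paper uses only No.\,1--18 — so your intuition that the obstruction ``emerges at the closing tetrahedron'' is also off; it is precisely this fact that lets the paper deduce Corollary~\ref{coro:abun} by stellar-subdividing No.\,19. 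Replacing your proposed global cone-overlap check by the local sign condition is the missing idea that makes the non-existence argument go through.
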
  

\begin{proof}
The Barnette sphere $\Si_B$ is known to be the underlying simplicial complex of an ordinary complete fan over $\R$ (see \cite[\S5, Chapter III]{ewal96}).  We assign each vertex of $\Si_B$ an arbitrary non-zero edge vector of the fan on which the vertex lies.  This defines a function $b\colon \Si_B^{(1)}\to \R^4$.  We also define a function $v\colon \Si_B^{(1)}\to \Z^4$ by assigning the standard basis vectors of $\Z^4$ to the vertices $e_1, e_2, e_3,e_4$ in this order and 
\[
v(d_1)=\begin{bmatrix} 1\\
0\\
1\\
0\end{bmatrix}, \quad
v(d_2)=\begin{bmatrix} 1\\
1\\
0\\
0\end{bmatrix},\quad
v(d_3)=\begin{bmatrix} 0\\
1\\
1\\
0\end{bmatrix},\quad
v(d_4)=\begin{bmatrix} 1\\
1\\
1\\
1\end{bmatrix}.
\]   
One can easily check that this satisfies the non-singular condition in the topological fan.  Therefore, if we define $\beta=(b,v)$, then $(\Si_B,\beta)$ is a complete non-singular topological fan so that $\Si_B$ is the underlying simplicial complex of the topological toric manifold associated with $(\Si_B,\beta)$.  This proves the former part of the theorem.   
  
We shall prove the latter part of the theorem.  Suppose that $\Si_B$ is the underlying simplicial complex of a toric manifold.  Then the primitive edge vectors in the fan of the toric manifold define a function $v\colon \Si_B^{(1)}\to \Z^4$ which satisfies not only the non-singular condition
\[
\det[v(p_1),v(p_2),v(p_3),v(p_4)]=\pm 1 \quad\text{for any 3-simplex $\{p_1 ,p_2 , p_3 , p_4 \}$ in $\Si_B$}
\]
but also the condition 
\begin{equation} \label{sign}
\det[v(p_1),v(p_2),v(p_3),v(p_4)]=-\det[v(p_1),v(p_2),v(p_3),v(p_4')]
\end{equation}
for any two 3-simplices $\{ p_1 , p_2 , p_3 , p_4 \}$ and $\{p_1 , p_2 , p_3 , p_4' \}$ in $\Si_B$ sharing a 2-simplex (that is $\{ p_1 , p_2 , p_3 \}$ in this case). 

Through a modular transformation of $\Z^4$, we may assume that the function $v$ takes the standard basis vectors of $\Z^4$ at the vertices $e_1,e_2,e_3,e_4$ in this order.  Then it follows from \eqref{sign} that the signs of the determinants corresponding to the 3-simplices in $\Si_B$ are as in Table~\ref{table1}.  We write 
\[
v(d_i)= [d_{i1} , d_{i2} , d_{i3} , d_{i4}]^T \quad\text{for $i=1,2,3,4$}.
\]

\medskip
\noindent
{\bf Claim 1.}  The following equations must hold. 
\begin{alignat}{8}
d_{ii} &= -1 \ \ & \text{for} \ \ i = 1, 2, 3, 4 . \label{eq1}  \\
d_{ij} d_{ji} &= 0 \ \ &  \text{for} \ \  (i , j) = (1,2) , (2,3) , (3,1) .\label{eq2} \\
d_{j4} + d_{i4}d_{ji} &= -1 \ \ & \text{for} \ \  (i , j) = (1,2) , (2,3) , (3,1) . \label{eq3} \\
d_{ji} + d_{j4}d_{4i} &= -1 \ \ & \text{for} \ \  (i , j) = (1,2) , (2,3) , (3,1) . \label{eq4}  \\
d_{ij}-d_{kj}-d_{4j}-d_{ij}d_{k4}d_{4k} &=1 \ \ & \text{for} \ \ (i,j,k) =(1,2,3),(2,3,1),(3,1,2) . \label{eq5} \\
d_{13}d_{32}d_{21} + d_{12}d_{23}d_{31} &= 0 . \ \ \   \label{eq6}
\end{alignat}

The equations above follow by calculating the determinants corresponding to the 18 3-simplices in $\Si_B$ except the 3-simplex No.19.  We shall briefly explain how we obtain them. First, the equations (\ref{eq1}) follow from the 3-simplices No.2-5. For instance, we obtain from No.2 that 
\begin{equation*}
-1 = 
\begin{vmatrix}
d _{11} & 0 & 0 & 0 \\
d _{12} & 1 & 0 & 0 \\
d _{13} & 0 & 1 & 0 \\
d _{14} & 0 & 0 & 1 \\
\end{vmatrix} \\
=d _{11}.
\end{equation*}
Because of \eqref{eq1}, we substitute $-1$ for $d_{ii}$ in the following.  Then the equations (\ref{eq2}) follow from the 3-simplices No.6-8.  For instance, we obtain from No.6 that 
\begin{equation*}
1 =
\begin{vmatrix}
-1 & d_{21} & 0 & 0 \\
d_{12}  & -1 & 0 & 0 \\
d_{13} & d_{23} & 1 &0 \\
d_{14} & d_{24} & 0 &1 \\
\end{vmatrix}\\
=1-d_{12}d_{21},
\end{equation*}
which shows $d_{12}d_{21}=0$. Similarly, the equations (\ref{eq3}) and (\ref{eq4}) follow from the 3-simplices No.9-11 and No.12-14 respectively.  For instance, we obtain from No.9 and 12 that  
\begin{equation*}
1 =
\begin{vmatrix}
1 & d_{21} & 0 & d_{31} \\
0 & -1 & 0 & d_{32} \\
0 & d_{23} & 1 & -1 \\
0 & d_{24} & 0 & d_{34} \\
\end{vmatrix}\\
 =-d_{34} -d_{32} d_{24},\quad 
%
1 =
\begin{vmatrix}
1 & 0& -1 & d_{41} \\
0 & 1& d_{12} & d_{42} \\
0 & 0& d_{13} & d_{43} \\
0 & 0& d_{14} & -1 \\
\end{vmatrix}
= -d_{13} - d_{14}d_{43}. 
\end{equation*}
The equations (\ref{eq5}) follow from the 3-simplices No.15-17.  For instance, we obtain from No.15 that  
\begin{align*}
1 &=
\begin{vmatrix}
1&-1 & d_{31} & d_{41}  \\
0&d_{12}  & d_{32} & d_{42}  \\
0&d_{13} & -1 & d_{43}  \\
0&d_{14} & d_{34} & -1  \\
\end{vmatrix}
=
d_{12} + d_{14}d_{43}d_{32} + d_{34}d_{42}d_{13} + d_{14}d_{42} - d_{12}d_{34}d_{43} + d_{13}d_{32} \\
&=
d_{12} + ( -1 -d_{13} ) d_{32} + d_{34}d_{42}d_{13} 
+ (-1 -d_{34}d_{13})d_{42} - d_{12}d_{34}d_{43} + d_{13}d_{32} \\
&=
d_{12}-d_{32}-d_{42}-d_{12}d_{34}d_{43} 
\end{align*}
where we used (\ref{eq3}) and (\ref{eq4}) for $(i,j) = (3,1)$ at the third equality above.  Finally the equation (\ref{eq6}) follows from the 3-simplex No.18: 
\begin{align*}
-1 &=
\begin{vmatrix}
-1 & d_{21} & d_{31} & 0 \\
d_{12}  & -1 & d_{32} & 0 \\
d_{13} & d_{23} & -1 &0 \\
d_{14} & d_{24} & d_{34} &1 \\
\end{vmatrix}
= -1 + d_{13}d_{32}d_{21} + d_{12}d_{23}d_{31} + d_{13}d_{31} +d_{23}d_{32} +d_{12}d_{21}\\
&= -1 + d_{13}d_{32}d_{21} + d_{12}d_{23}d_{31}
\end{align*}
where we used (\ref{eq2}) at the last equality above.  This proves Claim 1.  

The desired fact that the Barnette sphere $\Si_B$ cannot be the underlying simplicial complex of a toric manifold follows once we prove the following claim.  

\medskip
\noindent
{\bf Claim 2.}  The equations (\ref{eq2}) - (\ref{eq6}) have no integer solution. 

\medskip

First we note that the equations (\ref{eq2}) and (\ref{eq6}) imply that 
\begin{equation} \label{two equations}
\text{$d_{13}d_{32}d_{21}=0$\quad and\quad $d_{12}d_{23}d_{31}=0$.}
\end{equation}
Therefore 
\[
D:=\sharp \{ d_{ji} \mid d_{ji}=0 \ \text{for} \ (i,j)\in \{(1,2),(2,3),(3,1)\} \} \text{ is either 1, 2 or 3}. 
\]
We shall observe that this does not occur, which implies the claim. 

(1) The case where $D=3$.  In this case, it follows from  (\ref{eq3}) and (\ref{eq4}) that $d_{j4}=-1$ and $d_{4i} = 1$ for $1\le i,j\le 3$.  
Substituting these in (\ref{eq5}) and using $d_{ji}=0$ for  $(i , j) = (1,2) , (2,3) , (3,1)$, we get $d_{ij} = 1$ for  $(i , j) = (1,2) , (2,3) , (3,1)$.  
However this contradicts the fact $d_{12} d_{23}d_{31} = 0$ in \eqref{two equations}.  
  
(2) The case where $D=1 $ or $2$.  In this case $d_{ji}\not=0$ and $d_{j'i'}=0$ for some $(i,j)$ and $(i',j')$ in $\{(1,2),(2,3),(3,1)\}$.  Because of symmetry of the suffixes, we may assume that $d_{32}\neq 0$ and $d_{13}=0$ without loss of generality.  Then $d_{23}=0$ from (\ref{eq2}) with $(i,j)=(2,3)$ and $d_{14}=-1$ from (\ref{eq3}) with $(i,j)=(3,1)$, and then $d_{43}=1$ follows from (\ref{eq4}) with $(i,j)=(3,1)$.  However these values do not satisfy (\ref{eq5}) with $(i,j,k)=(2,3,1)$.  This completes the proof of the claim and hence the theorem. 
\end{proof}

The following corollary gives evidence that topological toric manifolds are much more abundant than toric manifolds.  The reader will find another evidence at the end of Section~\ref{sect:11}. 

\begin{coro} \label{coro:abun}
If $n\ge 4$, then there are infinitely many simplicial $(n-1)$-spheres each of which can be the underlying simplicial complex of a topological toric manifold but cannot be that of a toric manifold. 
\end{coro}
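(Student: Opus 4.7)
The plan is to construct, for each $n\ge 4$, an explicit infinite family of simplicial $(n-1)$-spheres satisfying the corollary, using two complementary ingredients: a join construction that will handle $n\ge 5$ uniformly from the Barnette sphere, and an iterated connected sum construction for the base case $n=4$.

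For $n\ge 5$, for each $k\ge 1$ I would take $Y_k$ to be the toric manifold obtained from $\C P^{n-4}$ by $k$ successive equivariant blow-ups at torus-fixed points, so that $\Si(Y_k)$ is a simplicial $(n-5)$-sphere with strictly increasing vertex counts. Setting $\Si_k:=\Si_B*\Si(Y_k)$ produces pairwise distinct simplicial $(n-1)$-spheres. Each $\Si_k=\Si(X\times Y_k)$ for any topological toric manifold $X$ realizing $\Si_B$ (from Theorem~\ref{theo:barnette}), so is the underlying complex of a topological toric manifold. To rule out $\Si_k$ being a toric manifold complex, I would suppose $\Si_k=\Si(Z)$ for a toric manifold $Z$, providing primitive integer vectors $\{v_a\}_{a\in V(\Si_B)}\cup\{w_j\}_{j\in V(\Si(Y_k))}\subset\Z^n$ unimodular on every facet of $\Si_k$. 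Fixing a facet $F_0$ of $\Si(Y_k)$, the facet-basis condition forces $L:=\langle w_j:j\in F_0\rangle_\Z$ to be saturated of rank $n-4$, and in the quotient $\Z^n/L\cong\Z^4$ the images $\bar v_a$ form a $\Z$-basis on every facet $F_1$ of $\Si_B$. The star-fan (orbit-closure) construction of toric geometry applied to the cone over $F_0$ in the fan of $Z$ will then show that the cones spanned by the $\bar v_a$ over each facet $F_1$ of $\Si_B$ form a complete simplicial fan on $\Si_B$, giving $\Si_B$ the structure of a toric manifold complex---contradicting Theorem~\ref{theo:barnette}.

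For the base case $n=4$, I would pick a facet $F$ of $\Si_B$ other than $\{d_1,d_2,d_3,d_4\}$ (facet No.~19 in Table~\ref{table1}), and for each $k\ge 1$ form the iterated connected sum $\Si_B^{\# k}$ along $F$ of $k$ copies of $\Si_B$. These have $4k+4$ vertices and are pairwise distinct. Each $\Si_B^{\# k}$ would be realized as a topological toric manifold via equivariant connected sum of $k$ copies of any topological toric manifold realizing $\Si_B$ at compatibly chosen fixed points; the matching of tangential representations at the gluing is arranged by flipping signs of the vectors $\beta_i$ for $i\in F$ on alternating copies. For the non-toric-ness, the point is that each Barnette copy in $\Si_B^{\# k}$ retains $18$ of its $19$ facets, including the critical facet $\{d_1,d_2,d_3,d_4\}$. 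A hypothetical toric manifold fan on $\Si_B^{\# k}$ would restrict on each copy to integer vectors satisfying the same sign conditions and determinant identities used in the proof of Theorem~\ref{theo:barnette}; the single condition lost from facet $F$ can be recovered up to a change of basis using any other facet through a vertex of $F$, so the resulting system will still have no integer solution.

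The hardest part will be the $n=4$ case. Naive iterated stellar subdivision of $\Si_B$ cannot be used, since stellar subdivision may resolve the singularity of the toric variety on $\Si_B$ and produce a genuine toric manifold whose complex is a stellar subdivision of $\Si_B$. The connected-sum construction sidesteps this precisely by leaving each Barnette copy's critical facet No.~19 intact. The technical heart of the $n=4$ argument will be to verify carefully that, with the single facet $F$ missing, the remaining $18$ sign/determinant conditions from each Barnette copy still admit no integer solution, which is to be checked by reworking the proof of Theorem~\ref{theo:barnette} with a normalization based on a facet different from No.~1.
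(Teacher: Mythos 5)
There is a genuine gap, and your stated reason for avoiding the stellar-subdivision route rests on a misconception. The paper's own proof \emph{does} use iterated stellar subdivisions, but performed only inside the facet $\{d_1,d_2,d_3,d_4\}$ (No.~19 in Table~\ref{table1}), which is precisely the one facet the obstruction of Theorem~\ref{theo:barnette} never touches: equations (\ref{eq1})--(\ref{eq6}) come from facets No.~2--18, normalized via No.~1. Subdividing only inside No.~19 and its descendants leaves the eight original vertices and all of facets No.~1--18 intact, so any hypothetical non-singular integral assignment on the subdivided complex would restrict to a solution of the unsolvable system (\ref{eq2})--(\ref{eq6}). Your worry that a subdivision ``may resolve the singularity'' therefore simply does not arise for this particular choice; Lemma~A.2 supplies the matching positive statement for topological toric manifolds, and Lemma~A.3 (suspension preserves the toric and topological-toric property) then handles every $n\ge 5$ from the $n=4$ family.

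Your two replacements both have real problems. The join construction fails already at $n=5$: there $\C P^{n-4}=\C P^1$, and the blow-up of a smooth point of a curve is the identity, so $\Si(Y_k)=S^0$ for all $k$ and the ``family'' $\{\Si_B * \Si(Y_k)\}$ collapses to the single suspension of $\Si_B$. (For $n\ge 6$ your star-fan/quotient-lattice reduction to $\Si_B$ is sound, but it is in essence an iterated form of what Lemma~A.3 already provides.) The connected-sum construction, which you are forced to rely on for the base case $n=4$ and which you flag as the hard part, is only sketched: if the shared facet $F$ is No.~1, the normalization used in the proof of Theorem~\ref{theo:barnette} is gone, and the claim that the surviving $18$ sign/determinant conditions per Barnette copy still admit no integer solution after renormalizing on a different facet is asserted, not verified. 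Until that verification is carried out and the $n=5$ case is repaired, the proposal does not establish the corollary.
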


\begin{proof}
We do not use the 3-simplex No.19 in the proof of Theorem~\ref{theo:barnette}.  Therefore the proof also shows that any simplicial complex obtained by performing stellar subdivisions on the 3-simplex finitely many times cannot be the underlying simplicial complex of a toric manifold.  On the other hand, any such simplicial complex can be the underlying simplicial complex of a topological toric manifold by Lemma A.2 in the appendix.  This together with Lemma A.3 in the appendix implies the corollary. 
\end{proof}

\section{Relation with quasitoric manifolds} \label{sect:10}

Two topological analogues of a toric manifold are known so far.  One is what is now called a quasitoric manifold introduced by Davis-Januszkiewicz \cite{da-ja91} and the other is a torus manifold introduced by Masuda \cite{masu99} and Hattori-Masuda \cite{ha-ma03}.  We will discuss the relation of topological toric manifolds with quasitoric manifolds in this section and with torus manifolds in the next section.  

The orbit space of a toric manifold $X$ by the restricted action of $(S^1)^n$ is often a simple polytope.  In fact, this is the case when $X$ is projective (since $X/(S^1)^n$ can be identified with the image of a moment map) or when $\dim_\C X\le 3$ (this is trivial when $\dim_\C X=1,2$ and follows from a well-known theorem of Steinitz (see \cite{zieg98}) when $\dim_\C X=3$).  
Davis-Januszkiewicz assert in \cite[line 15 in p.419]{da-ja91} that the orbit space of a toric manifold by the action of $(S^1)^n$ can always be identified with a simple polytope.  But this is uncertain although no counterexamples to their assertion are known.\footnote{Civan \cite{civa03} claims that 
there exists a toric manifold of complex dimension $4$ whose orbit space by the restricted action of $(S^1)^4$ is combinatorially different from any simple 4-polytope, but his proof is unclear unfortunately.}  Anyway, modeled by toric manifolds in algebraic geometry, Davis-Januszkiewicz introduced the following topological analogue of a toric manifold.  
 
\begin{defi}[Davis-Januszkiewicz \cite{da-ja91}] 
A closed smooth manifold $M$ of dimension $2n$ with a smooth action of $(S^1)^n$ is called a \emph{quasitoric manifold}\footnote{In \cite{da-ja91} this  is called a \emph{toric manifold} but since this terminology was already used in algebraic geometry, Buchstaber-Panov \cite{bu-pa02} started using the word \emph{quasitoric manifold}.} over a simple polytope $Q$ if 
\begin{enumerate}
\item the action of $(S^1)^n$ on $M$ is \emph{locally standard}, i.e. any point of $M$ has an invariant open neighborhood equivariantly diffeomorphic to an open invariant subset of a direct sum of complex one-dimensional representation spaces of $(S^1)^n$,  
\item the orbit space $M/(S^1)^n$ is the simple polytope $Q$.
\end{enumerate}
\end{defi} 

\begin{rema}
The restricted action of $(S^1)^n$ on a topological toric manifold $X$ is locally standard so that condition (1) above is satisfied.  The orbit space $X/(S^1)^n$ is a manifold with corners whose faces are all contractible and any intersection of faces is connected unless it is empty as remarked before.  So it is a simple polytope when $n\le 3$, so condition (2) is satisfied for any topological toric manifold $X$ when $n\le 3$.  However, since suspension of a non-polytopal simplicial sphere is still non-polytopal (in fact, the link of each vertex in a polytopal sphere is also polytopal; but the link of a new vertex in the suspension of a non-polytopal simplicial sphere is the non-polytopal simplicial sphere), the examples in Section~\ref{sect:9} show that there are topological toric manifolds which do not satisfy condition (2) when $n\ge 4$.  
\end{rema}

Let $M$ be a quasitoric manifold of dimension $2n$ with a simple polytope $Q$ as the orbit space.  Then there are only finitely many codimension two closed connected submanifolds fixed pointwise under some $S^1$-subgroup of $(S^1)^n$.  These submanifolds are also called \emph{characteristic submanifolds} of $M$ and denoted by $M_1,\dots,M_m$.  As for a topological toric manifold $X$, the characteristic submanifolds of $X$ are defined as codimension two closed connected submanifolds of $X$ fixed pointwise under some $\C^*$-subgroup of $(\C^*)^n$ but they may also be defined as those submanifolds fixed pointwise under some $S^1$-subgroup of $(S^1)^n$.    

As is well-known there is a natural isomorphism 
\[
\lambda\colon \Z^n\to \Hom(S^1,(S^1)^n)
\]
where $\Hom(S^1,(S^1)^n)$ is the group of smooth homomorphisms from $S^1$ to $(S^1)^n$.  In fact, the $\lambda$ above maps an element $q=(q_1,\dots,q_n)\in \Z^n$ to a homomorphism $\lambda_q\colon S^1\to (S^1)^n$ sending $g\in S^1$ to $(g^{q_1},\dots,g^{q_n})\in (S^1)^n$.  Let $v_i(M)$ be a primitive vector in $\Z^n$ such that $\lambda_{v_i(M)}$ is the $S^1$-subgroup of $(S^1)^n$ which fixes $M_i$ pointwise.  Note that $v_i(M)$ is defined only up to sign so that it defines a unique element in the quotient $\Z^n/\{\pm 1\}$.  In order to avoid the ambiguity of the sign, 
we need to specify an omniorientation on $M$ as is done for the topological toric manifolds.  

Similarly to the topological toric case, 
\[
\Si(M):=\{ I\subset [m]\mid M_I:=\bigcap_{i\in I}M_i\not=\emptyset\}
\]
is an abstract simplicial complex of dimension $n-1$ and the smoothness of $M$ implies the following non-singular condition:
\begin{equation} \label{smooth}
\text{$\{v_i(M)\}_{i\in I}$ is a part of a basis of $\Z^n$ whenever  $I\in \Si(M)$.}
\end{equation}  

Through the quotient map $M\to Q=M/(S^1)^n$, the characteristic submanifolds $M_1,\dots,M_m$ of $M$ bijectively correspond to the facets $Q_1,\dots,Q_m$ of $Q$. Note that for $I\subset [m]$, 
\begin{equation} \label{SiP}
\text{$I\in\Si(M)$\quad if and only if \quad $\bigcap_{i\in I}Q_i\not=\emptyset$}, 
\end{equation}
which means that the face poset of $Q$ coincides with the inverse poset of $\Si(M)$, in other words, the boundary complex of the simplicial polytope $P$ dual to $Q$ agrees with $\Si(M)$.  Therefore the orbit space $Q$ contains all the information of $\Si(M)$.  We may regard the collection of the $v_i(M)$'s as a map 
\begin{equation} \label{vM}
v(M)\colon [m]=\Si^{(1)}(M)\to \Z^n/\{\pm 1\}
\end{equation}
sending $i\in [m]$ to $v_i(M)\in\Z^n/\{\pm 1\}$. 

Conversely, if there is a map 
\[
v\colon [m]=\Si^{(1)}(M)\to \Z^n/\{\pm 1\}
\]
which satisfies condition \eqref{smooth}, then one can construct a quasitoric manifold $M(Q,v)$ over $Q$ depending only on $Q$ and $v$ (see \cite[Section 1.5]{da-ja91}) and the following is known to hold. 

\begin{prop}[see Proposition 1.8 in \cite{da-ja91}] \label{prop:daja}
Let $M$ be a quasitoric manifold over a simple polytope $Q$ and let $v(M)$ be the map associated with $M$.  Then, there is an equivariant homeomorphism $M\to M(Q,v(M))$ covering the identity on $Q$.  In particular, the pair $(Q,v(M))$ determines the equivariant homeomorphism type of $M$. 
\end{prop}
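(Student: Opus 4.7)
The plan is to construct an equivariant homeomorphism $F\colon M(Q,v(M))\to M$ covering the identity of $Q$, by first building a continuous section of the orbit map and then transporting the structure.

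First, I would build a continuous section $s\colon Q\to M$ of the quotient map $\pi\colon M\to Q$. Local standardness says that around each point of $M$ the action looks like an open invariant subset of a representation of $(S^1)^n$, so for each $x\in Q$ there is a local section defined on a neighborhood $U_x$ of $x$ in $Q$ (for a point in the relative interior of the face $F_I=\bigcap_{i\in I}Q_i$, a local chart gives a product $(S^1)^n/T_I\times \mathbb{R}^I_{\geq 0}\times \mathbb{R}^{n-|I|}$, and one picks the obvious section sending $x$ to a point with trivial angle coordinates). I would then glue these local sections by induction on the dimension of faces of $Q$: start by setting $s$ on the vertices to be the corresponding fixed points $M_{I_p}$, extend over edges using the contractibility of edges, and at each stage extend $s$ from the $k$-skeleton of $Q$ to the $(k+1)$-skeleton using contractibility of each $(k+1)$-face together with the fact that the extension problem, once formulated fiberwise, reduces to lifting a map from a contractible space. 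The resulting $s$ is continuous on all of $Q$.

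Next, define $\tilde F\colon (S^1)^n\times Q\to M$ by $\tilde F(g,x):=g\cdot s(x)$; this is continuous and $(S^1)^n$-equivariant. Recall from the definition of $M(Q,v(M))$ that $(g_1,x_1)\sim(g_2,x_2)$ iff $x_1=x_2=:x$ and $g_1g_2^{-1}$ lies in the subtorus $T_x\le (S^1)^n$ generated by $\{\lambda_{v_i(M)}(S^1)\mid x\in Q_i\}$. By the definition of $v_i(M)$, $T_x$ is precisely the isotropy subgroup of $(S^1)^n$ at $s(x)\in M$, so $\tilde F(g_1,x_1)=\tilde F(g_2,x_2)$ exactly when $(g_1,x_1)\sim (g_2,x_2)$. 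Hence $\tilde F$ descends to a continuous equivariant bijection $F\colon M(Q,v(M))\to M$; the map is surjective because $(S^1)^n\cdot s(Q)=M$ (every $(S^1)^n$-orbit meets $s(Q)$), and injective by the isotropy computation just made. Since $M$ is compact Hausdorff and $M(Q,v(M))$ is a quotient of the compact Hausdorff space $(S^1)^n\times Q$, a continuous equivariant bijection between them is automatically an equivariant homeomorphism.

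The main obstacle is the inductive construction of the global section $s$: patching local sections over adjacent faces requires that, along their common boundary, the chosen local trivializations agree up to multiplication by an element of the already-fixed isotropy subtorus. I would address this by working one face at a time from low dimension upward, using the fact that on the relative interior $F_I^\circ$ the map $\pi$ is a principal $(S^1)^n/T_I$-bundle, so extending a section already defined on the (contractible) closed lower-dimensional skeleton to the next face amounts to lifting a map from a contractible space to the total space of a torus bundle, which is unobstructed. With the section in hand, steps 2 and 3 are formal, and equivariance/homeomorphism follow from the compact-Hausdorff framework.
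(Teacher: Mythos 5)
This proposition is not proved in the paper at all; it is cited verbatim from Davis--Januszkiewicz (Proposition 1.8 in \cite{da-ja91}). So the comparison is really to their argument, and your outline does match their strategy exactly: construct a continuous section $s\colon Q\to M$ of the orbit map (their Lemma 1.4), send $(g,x)\mapsto g\cdot s(x)$, and check that this descends to a continuous equivariant bijection $M(Q,v(M))\to M$ over the identity of $Q$, which is a homeomorphism by compactness. Steps 2 and 3 in your write-up (the isotropy computation, surjectivity/injectivity, and the compact-to-Hausdorff bijection argument) are correct and complete.

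The gap is in step 1, the existence of the section, which is the whole content of Davis--Januszkiewicz's Lemma 1.4 and is not as soft as you make it sound. You say the inductive extension over a $(k+1)$-face ``reduces to lifting a map from a contractible space to the total space of a torus bundle, which is unobstructed,'' but the orbit map $\pi\colon \pi^{-1}(F)\to F$ is \emph{not} a torus bundle over the face $F$: the fibers are $T^{k+1}$ over the interior $F^\circ$ but degenerate to lower-dimensional tori over $\partial F$, so there is no fibration over all of $F$ to invoke, and $F^\circ$ alone (where one does have a trivial torus bundle) does not contain $\partial F$, which is precisely where the section you want to extend is supported. Even if you first push $s$ off $\partial F$ into $F^\circ$ using a collar and local standard models, you are then asking to extend a section of a trivial $T^{k+1}$-bundle over a $(k+1)$-disk from its boundary $S^k$; the obstruction lies in $\pi_k(T^{k+1})$, which is not automatically zero when $k=1$. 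Ruling out this obstruction (or avoiding the skeleton-by-skeleton scheme altogether) requires a genuine argument, such as the sheaf/\v{C}ech computation that $H^1$ of $Q$ with coefficients in the sheaf of local gauge transformations vanishes because $Q$ is contractible, or the explicit gluing over the cubical decomposition of $Q$ that Davis--Januszkiewicz use. As it stands, the assertion that the extension is ``unobstructed'' is unjustified and is exactly where the substance of the cited Lemma 1.4 lives.
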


Here is a relation between topological toric manifolds and quasitoric manifolds.  

\begin{theo} \label{theo:quasi}
The family of topological toric manifolds as $(S^1)^n$-manifolds contains the family of quasitoric manifolds up to equivariant homeomorphism.  To be more precise, the former family agrees with the latter when $n\le 3$ and properly contains the latter when $n\ge 4$ up to equivariant homeomorphism.  
\end{theo}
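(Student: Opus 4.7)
The plan is to prove the containment of families first, then analyze the two dimension ranges. To show that every quasitoric manifold arises as a topological toric manifold up to $(S^1)^n$-equivariant homeomorphism, let $M$ be quasitoric over a simple polytope $Q$ with characteristic function $v(M)\colon[m]\to\Z^n/\{\pm 1\}$. I set $\Si=\Si(M)$, which is the boundary complex of the simplicial polytope $P$ dual to $Q$; since $P$ is polytopal, I can place its vertices in $\R^n$ (with the origin in the interior) so that the radial rays through them are the edges of a complete simplicial fan, yielding vectors $b_i\in\R^n$. Choosing an arbitrary lift $v\colon\Si^{(1)}\to\Z^n$ of $v(M)$, setting $c=0$, and putting $\beta=(b+\sqrt{-1}c,v)$, condition \eqref{smooth} ensures that $\De=(\Si,\beta)$ is a complete non-singular topological fan. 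By Lemma~\ref{lemm:faces} the orbit space $X(\De)/(S^1)^n$ is a manifold with corners whose face poset equals that of $Q$, and combined with local standardness this will make $X(\De)$ into a quasitoric manifold over $Q$ with characteristic function $v(M)$; Proposition~\ref{prop:daja} will then produce the desired $(S^1)^n$-equivariant homeomorphism $X(\De)\cong M$.

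For the equality when $n\le 3$: I will show that any topological toric manifold $X$ of dimension $2n\le 6$ is quasitoric. The simplicial complex $\Si(X)$ is a simplicial sphere of dimension at most $2$, hence polytopal by the classical theorem of Steinitz, so its inverse poset is the face poset of some simple $n$-polytope $Q$. Applying Lemma~\ref{lemm:faces} together with the same combinatorial identification used above, $X/(S^1)^n$ is homeomorphic to $Q$ as a manifold with corners, and local standardness then places $X$ in the family of quasitoric manifolds.

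For the strict inclusion when $n\ge 4$: Theorem~\ref{theo:barnette} and Corollary~\ref{coro:abun} produce topological toric manifolds $X$ of dimension $2n$ whose underlying simplicial complex $\Si(X)$ is a non-polytopal simplicial $(n-1)$-sphere. On the other hand, for any quasitoric manifold $M$ the complex $\Si(M)$ coincides with the boundary complex of the simplicial polytope dual to $M/(S^1)^n$, and is therefore polytopal. Since the characteristic submanifolds of a topological toric manifold can be detected by either $\C^*$-subgroups or $S^1$-subgroups, $\Si(X)$ is an $(S^1)^n$-equivariant homeomorphism invariant, so the manifolds $X$ above cannot be $(S^1)^n$-equivariantly homeomorphic to any quasitoric manifold.

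The main obstacle is the identification $X(\De)/(S^1)^n\cong Q$ as a manifold with corners, rather than merely as a poset. This uses the fact that a simple polytope is determined by its face poset as a manifold with corners, and can be verified using the cubical embedding of $X(\De)/(S^1)^n$ into $[0,1]^m$ constructed in the proof of Lemma~\ref{lemm:faces} together with the contractibility of all faces; once this identification is in place, Proposition~\ref{prop:daja} closes the argument.
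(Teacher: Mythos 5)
Your proof is correct and follows essentially the same route as the paper: the containment is established by dualizing $Q$ to a simplicial polytope $P$, radially projecting its vertices to get $b_i$, lifting the characteristic function to get $v_i$, forming the topological fan, and invoking Proposition~\ref{prop:daja}; the $n\le 3$ case uses Steinitz, and $n\ge 4$ uses the Barnette sphere. You are in fact more careful than the paper in flagging the identification $X(\De)/(S^1)^n\cong Q$ \emph{as a manifold with corners} (not merely as a face poset), which is an implicit step in the paper's appeal to Proposition~\ref{prop:daja}; your sketch via the cubical embedding of Lemma~\ref{lemm:faces} and contractibility of faces is the right way to close that gap.

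One small imprecision in the $n\ge 4$ step: Corollary~\ref{coro:abun} produces, for each $n\ge 4$, infinitely many simplicial $(n-1)$-spheres that cannot underlie a \emph{toric} manifold, but its statement and proof (via stellar subdivisions and Lemma A.3) do not assert that those spheres are non-polytopal --- a stellar subdivision of a non-polytopal sphere need not remain non-polytopal. What you actually want, and what the paper's Remark after the quasitoric definition uses, is Theorem~\ref{theo:barnette} (the Barnette sphere itself is non-polytopal and realizable) together with Lemma A.3 and the elementary fact that the suspension of a non-polytopal simplicial sphere is again non-polytopal; iterated suspensions of the Barnette sphere then supply one non-polytopal example for every $n\ge 4$. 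Your concluding observation --- that $\Si(X)$ is an $(S^1)^n$-equivariant homeomorphism invariant, so such $X$ cannot be equivariantly homeomorphic to any quasitoric manifold --- is correct and is a cleaner way to phrase what the paper's Remark only asserts about failure of DJ's condition (2).
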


\begin{proof}
Let $M$ be a quasitoric manifold over a simple polytope $Q$. We put the simplicial polytope $P$ dual to $Q$ in $\R^n$ in such a way that the origin of $\R^n$ lies in the interior of $P$ and form cones by joining the origin and proper faces of $P$.  This produces an ordinary complete simplicial fan.  We identify the boundary complex of $P$ with $\Si(M)$ and take any non-zero vector, denoted $b_i$, lying on the 1-dimensional cone corresponding to the vertex $i\in\Si^{(1)}(M)$.  We take $c_i$ to be an arbitrary vector of $\R^n$, view $v_i(M)$ as an element of $\Z^n$ by taking a lift and set $\beta_i(M)=(b_i+\sqrt{-1}c_i,v_i(M))$.  Then $\De(M):=(\Si(M),\beta)$ is a complete, non-singular topological fan, where $\beta$ is the map $\Si^{(1)}(M)\to \RZRn=\MR^n$ sending $i$ to $\beta_i(M)$.  The topological toric manifold $X(\De(M))$ associated with $\De(M)$ is equivariantly homeomorphic to $M$ as $(S^1)^n$-manifolds by Proposition~\ref{prop:daja}.  (Note. Since there are uncountably many choices of the $b_i+\sqrt{-1}c_i$, there are uncountably many such topological toric manifolds $X(\De(M))$ by Theorem~\ref{theo:class}.)  This proves the former statement in the theorem. 

The latter statement in the theorem follows from the Remark after the definition of a quasitoric manifold above.  
\end{proof}
\begin{exam}
	Let $M$ be a closed simply-connected smooth manifold of dimension $4$ on which $(S^1)^2$ acts effectively. It follows from the classification result on such manifolds (see \cite{or-ra70}) that $M$ is diffeomorphic to $S^4$,  a connected sum of $m$ copies of $S^2\times S^2$, or a connected sum of $k$ copies of $\C P^2$ and $\ell $ copies of $\overline{\C P^2}$, where $\overline{\C P^2}$ is $\C P^2$ with orientation reversed. Here any values of $m$ , $k$ and $\ell$ can occur while if $M$ is a toric manifold, then $M$ is diffeomorphic to $S^2\times S^2$, $\C P^2$ or their connected sum with a finitely many copies of $\overline{\C P^2}$.    
The action of $(S^1)^2$ on $M$ is locally standard and the orbit space $M/(S^1)^2$ is a $q$-gon where $q=2$ if and only if $M=S^4$. Therefore, unless $M=S^4$, $M$ with the action of $(S^1)^2$ is a quasitoric manifold. Theorem \ref{theo:quasi} tells us that the action of $(S^1)^2$ on $M$ extends to an action of $(\C ^*)^2$ which locally looks like a smooth faithful representation of $(\C ^*)^2$ and Corollary~\ref{coro:class2} implies that there are uncountably many extended actions of $(\C ^*)^2$ on $M$ up to equivariant diffeomorphism. 
\end{exam}
\begin{rema}
It is natural to expect that the \lq\lq homeomorphic" in Theorem~\ref{theo:quasi} can be replaced by \lq\lq diffeomorphic", but for that we would need to establish the diffeomorphism version of Proposition~\ref{prop:daja} that the pair $(Q,v(M))$ determines the equivariant diffeomorphism type of $M$.
\end{rema}

\section{Relation with torus manifolds} \label{sect:11}

A \emph{torus manifold} is a closed connected orientable smooth manifold $M$ of dimension $2n$ with an effective smooth action of $(S^1)^n$ having a fixed point.\footnote{In \cite{ha-ma03} an omniorientation is incorporated in the definition of a torus manifold.}  Obviously a topological toric manifold as an $(S^1)^n$-manifold is a torus manifold and the family of torus manifolds is much larger than the family of topological toric manifolds as $(S^1)^n$-manifolds as is seen below. 

\begin{exam}
We regard $2n$-sphere $S^{2n}$ as the unit sphere of $\C^n\oplus\R$.  Then  $S^{2n}$ has a natural action of $(S^1)^n$ defined by 
\[
(z_1,\dots,z_n,y)\to (g_1z_1,\dots,g_nz_n,y) 
\]
where $(z_1,\dots,z_n)\in \C^n, y\in \R$ and $(g_1,\dots,g_n)\in (S^1)^n$.  This action has two fixed points, the north pole and the south pole, so $S^{2n}$ with the action of $(S^1)^n$ is a torus manifold.  However, $S^{2n}$ for $n\ge 2$ cannot be the underlying manifold of a topological toric manifold by Proposition~\ref{prop:cohom} because $H^*(S^{2n})$ for $n\ge 2$ is not generated by degree two elements as a ring.   The orbit space $S^{2n}/(S^1)^n$ is an $n$-dimensional manifold with corners, see figure~\ref{figure3}.  

\begin{center}
\begin{figure}[h]
	\includegraphics{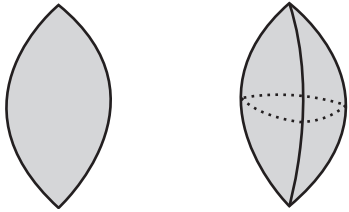}
\caption{$S^4/(S^1)^2$ and $S^6/(S^1)^3$}
\label{figure3} 
\end{figure}
\end{center}
\end{exam}

The action of $(S^1)^n$ on a torus manifold $M$ is not necessarily locally standard and hence the orbit space $M/(S^1)^n$ may not be a manifold with corners, and even if the action is locally standard (so that $M/(S^1)^n$ is a manifold with corners), $M/(S^1)^n$ and its proper faces may not be contractible or may have non-trivial cohomology.  Here are simple examples. 

\begin{exam}
Torus manifolds of dimension $4$ whose actions are \emph{not} locally standard can be constructed as follows (see also \cite{or-ra70}). 
	Let $D^2$ denote the unit disc in $\C$. Let $m$ be an integer greater than $1$. Consider the action of $(S^1)^2$ on $(S^1)^2\times D^2$ given by
	\begin{equation}\label{eq:Seifert}
		(h_1,h_2,v) \mapsto (g_1h_1,g_2^mh_2, g_2^{-1}v)
	\end{equation}
	for $(g_1,g_2) \in (S^1)^2$. 
	The isotropy subgroup at $(h_1,h_2,v) \in (S^1)^2 \times D^2$ is trivial if $v \neq 0$  and $\{ 1\} \times \Z /m$ if $v= 0$ . Consider the  diffeomorphism
	\begin{equation*}
	\begin{split}
		\varphi : \partial ((S^1)^2 \times D^2)=(S^1)^3 &\to (S^1)^3\\
		(h_1,h_2,v)&\mapsto (h_1,h_2v^{m-1},h_2v^m).
	\end{split}
	\end{equation*}
	Then, we have
	\begin{equation*}
		\varphi ((g_1,g_2)\cdot (h_1,h_2,v))=(g_1h_1,g_2h_2v^{m-1},h_2^mv).
	\end{equation*}
\indent Take any torus manifold $M$ of dimension $4$ and let $N$ be a closed equivariant tubular neighborhood of a free orbit of $M$. Thanks to the slice theorem, $N$ is equivariantly diffeomorphic to $(S^1)^2 \times D^2$ for the action defined by
	\begin{equation}\label{eq:tubular}
		(g_1,g_2)\cdot (h_1',h_2',v)=(g_1h_1',g_2h_2',v).
	\end{equation}
\indent Let $\psi $ denote an equivariant diffeomorphism from $N$ to $(S^1)^2 \times D^2$ with the action given by \eqref{eq:tubular}. Then, the map $\varphi ^{-1}\circ \psi : \partial N \to \partial ((S^1)^2 \times D^2)$ is an equivariant diffeomorphism for the action given by \eqref{eq:Seifert}. We glue $M \setminus (\Int N)$ and $(S^1)^2\times D^2$ by $\varphi ^{-1}\circ \psi$ along their boundaries, where $\Int N$ denotes the interior of $N$. The resulting manifold 
	\begin{equation*}
		M'=M\setminus (\Int N)\cup _{\varphi ^{-1}\circ \psi}(S^1)^2 \times D^2
	\end{equation*}
	is a torus manifold of dimension $4$
	having a point whose isotropy subgroup is nontrivial and finite. Namely, $(S^1)^2$-action on $M'$ is not locally standard. 
\end{exam}

\begin{exam}
Consider the standard $(S^1)^2$-action on $\C P^2$ defined by 
\[
[z_0,z_1,z_2]\to [z_0,g_1z_1,g_2z_2]
\]
where $[z_0,z_1,z_2]\in \C P^2$ and $(g_1,g_2)\in (S^1)^2$.  This action is locally standard and the orbit space $\C P^2/(S^1)^2$ can be identified with a triangle $\Delta$.  Let $q\colon \C P^2\to \C P^2/(S^1)^2=\Delta$ be the quotient map.  A free $(S^1)^2$-orbit in $\C P^2$ corresponds to an interior point in the triangle $\Delta$ through the map $q$.  We take a closed disk $D_1$ in the interior of $\Delta$.  Note that $q^{-1}(D_1)$ can be identified with $D^2\times (S^1)^2$ as $(S^1)^2$-manifolds where $(S^1)^2$ acts trivially on $D^2$ and as group multiplication on $(S^1)^2$.  We remove $q^{-1}(\Int D_1)$ from $\C P^2$, where $\Int D_1$ denotes the interior of $D_1$.  On the other hand, we take a closed disk $D_2$ in a closed orientable surface $\Sigma_g$ with genus $g\ge 1$ and glue $(\Sigma_g\backslash \Int D_2)\times (S^1)^2$ to $\C P^2\backslash q^{-1}(\Int D_1)$ equivariantly along their boundary.  This produces a torus manifold of dimension $4$ whose orbit space is the connected sum of $\Delta$ with $\Sigma_g$ at interior points.  Since $g\ge 1$, the orbit space is non-acyclic, see figure~\ref{figure4}.  

\begin{figure}[h]
\begin{center}
	\includegraphics{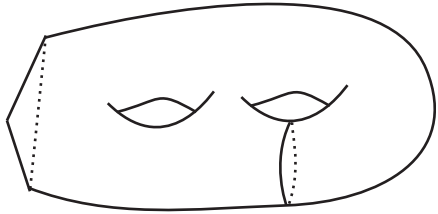}
\end{center}
\caption{non-acyclic orbit space with $g=2$}
\label{figure4}
\end{figure}

\end{exam}   
  
The following theorem from \cite{ma-pa06} clarifies the relation between the topology of a torus manifold $M$ and the topology and combinatorics of the orbit space $M/(S^1)^n$.  

\begin{theo}[\cite{ma-pa06}]\label{theo:ma-pa}
Let $M$ be a torus manifold of dimension $2n$.  
\begin{enumerate}
\item $H^{odd}(M)=0$ if and only if the action of $(S^1)^n$ on $M$ is locally standard and $M/(S^1)^n$ is \emph{face acyclic}, i.e. 
every face of $M/(S^1)^n$ (even $M/(S^1)^n$ itself) is acyclic.
\item $H^*(M)$ is generated by degree two elements as a ring if and only if the action of $(S^1)^n$ on $M$ is locally standard and $M/(S^1)^n$ is a homology polytope, i.e. in addition to the face acyclicity, any intersection of faces in $M/(S^1)^n$ is connected unless it is empty.
\end{enumerate}
\end{theo}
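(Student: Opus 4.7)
The plan is to analyze $M$ through its Borel equivariant cohomology and the orbit-type filtration. Writing $T = (S^1)^n$ for brevity, the Serre spectral sequence of the Borel fibration $M \hookrightarrow ET \times_T M \to BT$ has $E_2^{p,q} = H^p(BT) \otimes H^q(M)$. Since $BT = (\C P^\infty)^n$ has cohomology concentrated in even degrees, the hypothesis $H^{odd}(M) = 0$ forces all $d_r$ to vanish for degree-parity reasons, so $M$ is \emph{equivariantly formal}: $H^*_T(M)$ is a free $H^*(BT)$-module and the restriction $H^*_T(M) \to H^*_T(M^T)$ is injective by the localization theorem. This is the machine I would use throughout, together with the slice theorem, which identifies an equivariant neighborhood of an orbit $T/H$ with $T \times_H V$ for a real $H$-representation $V$.

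For the \textbf{only if} direction of (1), I would first show that every isotropy subgroup is connected, so the action is locally standard. If some isotropy $H$ had nontrivial component group $H/H_0$, then for the slice bundle $T \times_H V$ one computes $H^*_T(T\times_H V) = H^*_H(V) = H^*(BH)$, and a non-connected $H$ forces odd-degree classes or finite-group torsion obstructions in the equivariant cohomology, contradicting the free module structure over $H^*(BT)$. Once local standardness is granted, $M/T$ becomes a manifold with corners with well-defined faces, and the preimage $q^{-1}(F)$ of a face $F$ under the orbit map $q\colon M \to M/T$ is itself a lower-dimensional torus manifold (or a torus bundle over $F$). Face acyclicity of $M/T$ is then obtained inductively along the face lattice: if a face $F$ had nontrivial reduced cohomology, a Leray-type spectral sequence for $q|_{q^{-1}(F)}$ together with the equivariant formality inherited by $q^{-1}(F)$ would produce odd classes in $H^*(M)$, a contradiction.

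For the \textbf{if} direction of (1) and for (2), assume the action is locally standard and $M/T$ is face acyclic. Using a filtration of $M/T$ by the codimension of faces, together with the fact that over the interior of each face the orbit map is a principal torus bundle, Mayer--Vietoris assembles $H^*(M)$ from contributions attached to faces; acyclicity of each face kills the base cohomology, so only even classes survive and $H^{odd}(M) = 0$. For (2), under the additional homology polytope hypothesis, one constructs an explicit surjection from a face ring of the form $\Z[\mu_1,\dots,\mu_m]/\MI$ onto $H^*(M)$ (the $\mu_i$ being Poincar\'e duals of the characteristic submanifolds, and $\MI$ encoding the face lattice and the characteristic function), showing $H^*(M)$ is generated in degree two. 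Conversely, if $H^*(M)$ is generated in degree two, then in particular $H^{odd}(M) = 0$, so (1) delivers local standardness and face acyclicity; and any face intersection with several connected components would produce Poincar\'e-dual classes that are linearly independent from products of the $\mu_i$, violating degree-two generation.

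The hard part will be the implication $H^{odd}(M) = 0 \Rightarrow$ locally standard, since it must rule out disconnected isotropy subgroups globally using only a cohomological hypothesis. A clean route is to combine the orbit-type stratification with a careful analysis of how $H^*(BH)$ for non-connected $H$ sits inside $H^*_T(M)$, extracting a contradiction either with equivariant formality or with integrality of equivariant Euler classes of slice representations. A secondary subtlety, already foreshadowed above, is that face acyclicity is not a single global condition but must be verified for every face simultaneously; the induction along the face lattice, treating each $q^{-1}(F)$ as a smaller torus manifold, is what makes the argument go through.
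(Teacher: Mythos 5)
This theorem is not proved in the paper at all — it is quoted from Masuda--Panov \cite{ma-pa06}. So there is no internal proof to compare against; I'll assess your sketch against what the cited result actually requires.

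Your setup (equivariant formality from $H^{odd}(M)=0$, the slice theorem, face stratification of $M/T$) is the right framework, and you correctly identify the crux: extracting local standardness from a purely cohomological hypothesis. But the argument you propose for that step does not work as stated. Computing $H^*_T(T\times_H V)\cong H^*(BH)$ and observing it has torsion when $H$ is disconnected is a statement about an \emph{open} invariant subset, and open restrictions do not inherit freeness over $H^*(BT)$. For example, on $S^2$ with the rotation action (equivariantly formal, $H^*_T$ free), the complement of the two poles retracts equivariantly to a free orbit and has $H^*_T\cong\Z$, a torsion $H^*(BT)$-module — no contradiction to anything. So ``torsion in an open slice'' cannot clash with global freeness by itself. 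A workable argument must route through \emph{closed} invariant subsets (components of $M^{T'}$ for subtori $T'$) using the Chang--Skjelbred/Atiyah--Bredon surjectivity of restriction maps in the equivariantly formal case, and even then there is real work to isolate and rule out disconnected isotropy. The actual Masuda--Panov proof does not argue orbit-by-orbit this way: it introduces the face ring $\Z[Q]$ of the manifold with corners $Q=M/T$, identifies $H^*_T(M)$ with $\Z[Q]$ in the locally standard face-acyclic case, and handles the non-locally-standard possibility via a blow-up/resolution construction, tracking how $H^{odd}$ changes under blow-up. Your Mayer--Vietoris plan for the ``if'' direction and the ``Poincar\'e-dual classes independent from products'' argument for the converse of (2) are plausible intuitions that are, in the actual proof, made precise exactly by this face-ring machinery, which your sketch lacks. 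The gap is genuine: the key implication is left as a promissory note rather than an argument.
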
  
  
Although the family of torus manifolds is much larger than the family of topological toric manifolds as $(S^1)^n$-manifolds, one can associate a combinatorial object called a multi-fan to a torus manifold (\cite{ha-ma03}, \cite{masu99}), which is another generalization of an ordinary fan.  We shall review it to compare with our topological fan.  

Let $M$ be a torus manifold of dimension $2n$.  As before, a characteristic submanifold of $M$ is a codimension two closed connected submanifold fixed pointwise under some $S^1$-subgroup of $(S^1)^n$.  There are only finitely many characteristic submanifolds in $M$ which we denote by $M_1,\dots,M_m$.  Then the simplicial complex $\Si(M)$ can be defined as before.  An omniorientation on $M$ is a choice of an orientation on $M$ and on each $M_i$.  Suppose $M$ is omnioriented.  Then the vectors $\v_i(M)$ for $i=1,\dots,m$ can be defined without ambiguity of sign similarly to the topological toric case, so that we have a map
\[
v(M)\colon [m]=\Sigma^{(1)}(M)\to \Z^n
\]
sending $i$ to $v_i(M)$. 

Let $I\in \Si^{(n)}(M)$.  Then $M_I:=\bigcap_{i\in I}M_i$ is not necessarily one point but consists of finitely many fixed points.  For each fixed point $p\in M_I$ we have an identity 
\begin{equation} \label{TpM}
\tau_pM=\bigoplus_{i\in I}\nu_i|_p\quad\text{as real $(S^1)^n$-representation spaces}
\end{equation}
similarly to the topological toric case \eqref{TpX}.  Since $M$ is omnioriented, both sides above have orientations as discussed in Section~\ref{sect:3}.  We set $w(p)=1$ if the orientations agree and $w(p)=-1$ if they disagree, and define 
\[
\begin{split}
&w^+(M)(I):=\#\{p\in M_I\mid w(p)=1\},\quad w^-(M)(I):=\#\{p\in M_I\mid w(p)=-1\},\\
&w(M)(I):=w^+(M)(I)-w^-(M)(I) \quad\text{for $I\in \Si^{(n)}(M)$}.
\end{split}
\]
When $M$ is a topological toric manifold, $M_I$ is one point for $I\in \Si^{(n)}(M)$ so that $w(M)(I)=\pm 1$.  If $M$ is a toric manifold, then $M$ has a canonical omniorientation induced from the complex structures on $M$ and $M_i$'s, and $w(M)(I)=1$ with the canonical omniorientation.  

\begin{defi}
A triple $(\Si(M),v(M), w^{\pm}(M))$ is called the \emph{multi-fan} associated to $M$.  
\end{defi}

\begin{rema}
Let $X$ be an omnioriented topological toric manifold of dimension $2n$ and let $\De(X)=(\Si(X),\beta(M))$ be the topological fan associated with $X$.  Since $X$ is omnioriented, one can define $w^\pm(X)$ in the same way as $w^\pm(M)$ above.  Then the multi-fan associated with $X$ as an $(S^1)^n$-manifold is the triple $(\Si(X),v(M),w^\pm(X))$.  Note that one can find $w^\pm(X)$ from $\beta(X)$.  In fact, it follows from Lemma~\ref{lemm:orient} that  if the determinant of the matrix formed from $\{\beta_i(X)\}_{i\in I}$ (viewed as a matrix of size $2n$) is positive (resp. negative), then $(w^+(X)(I),w^-(X)(I))=(1,0)$ (resp. $(0,1)$) and hence $w(X)(I)=1$ (resp. $-1$) for $I\in \Si^{(n)}(X)$.    
\end{rema}

Unlike the topological toric case, the multi-fan $(\Si(M),v(M), w^{\pm}(M))$ does not determine the torus manifold $M$.  However, it contains a lot of geometrical information on $M$ because it contains the complete information on the tangential representation at each fixed point.  For instance, important characteristic numbers of $M$ such as signature, Hirzebruch $T_y$-genus (or $\chi_y$-genus) and elliptic genus can be described in terms of the multi-fan of $M$, see \cite{ha-ma03}, \cite{ha-ma05}, (a similar description can be found in \cite{pano99} and \cite{pano01}).  Therefore, it is possible to describe those invariants of a topological toric manifold $X$ in terms of the topological fan $\De(X)$ because the topological fan $\De(X)$ determines the multi-fan of $X$ as remarked above.  

For each $I\in\Si(M)$, we form a cone $\angle v_I(M)$ in $\R^n$ generated by the vectors $v_i(M)$ for $i\in I$.  When $M$ is a toric manifold, these cones do not overlap and form an ordinary complete non-singular fan associated with $M$.  Unless $M$ is a toric manifold, the cones $\angle v_I(M)$ may overlap.  However, they are not placed at random as is shown in the theorem below.  In \cite{ha-ma03}, the Todd genus is defined for an omnioriented torus manifold $M$ in such a way that when the omniorientation is induced from an $(S^1)^n$-invariant unitary (or weakly almost complex) structure, then our Todd genus agrees with the Todd genus of $M$ with the unitary structure.  

\begin{theo}[Theorem 4.2 in \cite{masu99}]
Let $M$ be an omnioriented torus manifold of dimension $2n$ and let $v$ be a generic element in $\R^n$.  Then the Todd genus $T[M]$ of $M$ is given by 
\[
T[M]=\sum_{I \in \Si ^{(n)} : v\in \angle v_I(M)}w(M)(I).
\]
In particular, the right hand side above is independent of the choice of the generic element $v$ because so is the left hand side.  
\end{theo}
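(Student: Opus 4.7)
The plan is to apply the Atiyah-Bott-Singer localization formula for the equivariant Todd class with respect to the circle subgroup $\lambda_v(S^1) \subset (S^1)^n$. First I would observe that, since $v$ is generic, $\lambda_v(S^1)$ has the same fixed-point set as the whole $(S^1)^n$, and for each fixed point $p$ there is a unique $I(p) \in \Si^{(n)}$ with $p \in M_{I(p)}$ (because $\tau_pM$ has complex dimension $n$ and decomposes into the $n$ normal lines $\nu_i|_p$ for $i \in I(p)$ as in \eqref{TpM}). On each $\nu_i|_p$ the omniorientation selects a complex structure in which $\lambda_{v_i(M)}(g)$ acts as multiplication by $g$, hence the weight of $\lambda_v(S^1)$ on $\nu_i|_p$ equals $\langle v, v_i(M)\rangle$, a nonzero integer by genericity.

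Next I would write the equivariant Todd class as a formal Laurent series in a generator $t$ of $\Lie(S^1)^*$. The definition of the Todd genus of an omnioriented torus manifold given in \cite{ha-ma03} is precisely set up so that localization takes the form
\begin{equation*}
T[M]\;=\;\lim_{t\to 0}\;\sum_{p\in M^{(S^1)^n}} w(p)\,\prod_{i\in I(p)}\frac{1}{1-e^{-\langle v,v_i(M)\rangle\, t}},
\end{equation*}
where the sign $w(p)$ compares the ambient orientation on $\tau_pM$ with the orientation induced by the chosen complex structures on the $\nu_i|_p$. (If $M$ admits a global invariant almost complex structure, all $w(p) = +1$ and one recovers the classical Atiyah-Bott formula.) The left-hand side is a finite number, so the poles at $t=0$ on the right cancel in the sum.

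The crux is a combinatorial regrouping. Using $\tfrac{1}{1-e^{x}} = 1 - \tfrac{1}{1-e^{-x}}$, one can flip each negative weight at the cost of a factor $-1$ and an extra summand. Doing this for every fixed point and every $i \in I(p)$ with $\langle v,v_i(M)\rangle < 0$, the formula expands into $2^n \cdot |M^{(S^1)^n}|$ terms indexed by choices of sign on each weight. Terms involving at least one factor $\frac{1}{1-e^{-\langle v,v_i(M)\rangle t}}$ of strictly positive weight keep a pole at $t=0$ and must therefore cancel among themselves globally; this cancellation is exactly what rigidity of the equivariant index provides. What survives as $t\to 0$ are only the constant terms coming from those fixed points $p$ where every weight $\langle v,v_i(M)\rangle$ is positive, i.e.\ where $v \in \angle v_{I(p)}(M)$, each contributing the signed factor $w(p)$. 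Summing these and grouping by $I = I(p)$ yields $\sum_{I:\,v\in\angle v_I(M)} w(M)(I)$.

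The main obstacle is the bookkeeping in the cancellation argument: one must show that the non-constant contributions from fixed points whose weights are not all of the same sign really do telescope to zero after summation. A clean way to handle this, which I would adopt, is to prove it first in the universal situation of a faithful $(S^1)^n$-representation space $V$ compactified to $(\C P^1)^n$ (a toric manifold of Todd genus $1$), where the answer is known and the ABBV formula can be checked directly, and then to deduce the general identity by localizing at each fixed point separately: every fixed neighborhood is equivariantly modeled on such a representation, and the local contribution to the ABBV sum depends only on the weights. An alternative is the Morse-theoretic approach of Atiyah-Bott, viewing $-\sum_i \langle v, v_i(M)\rangle \log|z_i|^2$ as a perfect $S^1$-Morse function whose minima are precisely the $p$ with $v\in\angle v_{I(p)}(M)$; the Todd genus then reads off directly as the signed count of these minima, which bypasses the $t\to 0$ expansion altogether.
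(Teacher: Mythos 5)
Your overall strategy---Atiyah--Bott--Segal--Singer localization for the equivariant index restricted to a generic one-parameter subgroup $\lambda_v$---is indeed the method used in Theorem~4.2 of \cite{masu99}. But the middle of your argument, the $t\to 0$ limit followed by the $\tfrac{1}{1-e^{x}}=1-\tfrac{1}{1-e^{-x}}$ regrouping, does not isolate the claimed constant contributions, and the appeal to rigidity at that point is circular. Concretely: after flipping a negative weight $m_i<0$, the new factor $\tfrac{1}{1-e^{m_i t}}$ still has a simple pole at $t=0$, so every term in your expanded sum still blows up as $t\to 0$, including the ones coming from fixed points with all weights positive (each factor $\tfrac{1}{1-e^{-m_i t}}$ with $m_i>0$ is a pole). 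Moreover $\tfrac{1}{1-e^{-mt}}=\tfrac{1}{mt}+\tfrac12+O(t)$, so the ``pole terms'' contribute to the constant coefficient too; their singular parts cancel automatically (that is just the statement that the left-hand side is finite, not a rigidity input), but their constant parts do not vanish, so the surviving constant is not simply $\sum w(p)$ over fixed points with a fixed weight sign pattern.

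The argument in \cite{masu99} takes the limit at the \emph{other} end of the subgroup, in $K$-theory, which makes everything transparent. The equivariant index is a finite virtual character of $T^n$, so its restriction to $\lambda_v(\C^*)$ is a Laurent polynomial $f(s)=\sum_k a_k s^k$, $s\in\C^*$. The fixed point formula gives, for generic $s$,
\[
f(s)=\sum_p w(p)\prod_{i\in I(p)}\frac{1}{1-s^{-\langle\mu_i(p),v\rangle}},
\]
where $\{\mu_i(p)\}_{i\in I(p)}$ is dual to $\{v_i(M)\}_{i\in I(p)}$. As $s\to\infty$ each factor tends to $1$ if $\langle\mu_i(p),v\rangle>0$ and to $0$ otherwise, so the right-hand side tends to $\sum_{p:\,v\in\angle v_{I(p)}} w(p)$; symmetrically $s\to 0$ also yields a finite limit. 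A Laurent polynomial with finite limits at both $0$ and $\infty$ is constant, hence $f\equiv f(1)=T[M]$, giving the formula at once---and giving rigidity of $f$ along $\lambda_v$ as a \emph{corollary}, not an input. Your Morse-theoretic alternative is essentially the geometric shadow of this $s\to\infty$ argument (the contributing fixed points are exactly the sinks of the $\lambda_v$-flow), and would be a sounder plan to pursue than the $t\to 0$ bookkeeping; the ``universal $(\C P^1)^n$ model'' idea does not, as stated, reduce the global cancellation to a local check.
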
 

As is well-known, the Todd genus of a toric manifold is one.  This fact can also be seen from the theorem above because when $M$ is a toric manifold, $w(M)(I)=1$ for any $I\in \Si^{(n)}(M)$ and the cones $\angle v_I(M)$ do not overlap as remarked before.  When $M$ is a topological toric manifold, $w(M)(I)=\pm 1$ for any $I\in\Si^{(n)}(M)$ but the Todd genus $T[M]$ can assume any integer value.  In fact, such examples are constructed for torus manifolds of dimension 4 in \cite[Section 4]{masu99} but they are actually quasitoric manifolds and since any quasitoric manifold is a topological toric manifold by Theorem~\ref{theo:quasi}, the Todd genus can assume any integer value for topological toric manifolds.  This gives another evidence that topological toric manifolds are much more abundant than toric manifolds, cf. Corollary~\ref{coro:abun}.

\section{Real topological toric manifolds and small covers} \label{sect:12}

One can see that the argument developed in previous sections work\textcolor{blue}{s} with $\C, S^1, \Z$ replaced by $\R, S^0, \Z/2=\{0,1\}$ respectively with a little modification. In fact, the situation becomes simpler because any smooth group endomorphism of $\R^*=\R_{>0}\times S^0$ is of the form 
\[
g\to |g|^b(\frac{g}{|g|})^{\bar v} \quad \text{with $(b,\bar v)\in \R\times \Z/2$}.
\]
In this section we will briefly discuss a real analogue of a topological toric manifold and a quasitoric manifold.  

\begin{defi}
We say that a closed smooth manifold $Y$ of dimension $n$ with an effective \emph{smooth} action of $(\R^*)^n$ having an open dense orbit is a (compact) \emph{real topological toric manifold} if it is covered by finitely many invariant open subsets each of which is equivariantly diffeomorphic to a direct sum of real one-dimensional \emph{smooth} representation spaces of $(\R^*)^n$.  
\end{defi}

Similarly to the topological toric case, one can define characteristic submanifolds $Y_1,\dots,Y_m$ of $Y$, each of which is a codimension one connected closed submanifold fixed pointwise under some $\R^*$-subgroup of $(\R^*)^n$, and associate a combinatorial object $\De(Y)=(\Si(Y),\bar\beta(Y))$.  Here 
 \[
\Si(Y):=\{ I\subset [m]\mid Y_I:=\bigcap_{i\in I}Y_i\not=\emptyset\}\cup \{\emptyset\}
\]
and 
\[
\bar\beta(Y)\colon [m]=\Si^{(1)}(Y)\to (\Z/2)^n
\]
which sends $i\in[m]$ to the $\R^*$-subgroup of $(\R^*)^n$ fixing $Y_i$ through a natural isomorphism 
\[
\Hom(\R^*,(\R^*)^n)\cong \R^n\times (\Z/2)^n.
\]

\begin{defi}
Let $\Si$ be an abstract simplicial complex of dimension $n-1$ (with the empty set $\emptyset$ added) and let 
\[
\bar\beta\colon \Si^{(1)} \to \R^n\times(\Z/2)^n.
\]
We express $\bar\beta(i)=\bar\beta_i=(b_i,\bar v_i)\in \R^n\times(\Z/2)^n$.  Then a pair $\De=(\Si,\bar\beta)$ is called a \emph{(simplicial) real topological fan} of dimension $n$ if the following are satisfied. 
\begin{enumerate}
\item $b_i$'s for $i\in I$ are linearly independent whenever $I\in \Si$, and $\angle \b_I\cap \angle \b_J=\angle \b_{I\cap J}$ for any $I,J\in \Si$.
(In short, the collection of cones $\angle b_I$ for all $I\in \Si$ is an ordinary simplicial fan although the $\b_i$'s are not necessarily in $\Z^n$.) 
\item $\bar v_i$'s for $i\in I$ are linearly independent over $\Z/2$ whenever $I\in \Si$.  
\end{enumerate} 
We say that a topological fan $\De$ of dimension $n$ is \emph{complete} if $\bigcup_{I\in \Si}\angle \b_I=\R^n$ and \emph{non-singular} if $\bar\v_i$'s for $i\in I$ form a part of a basis of $(\Z/2)^n$ whenever $I\in\Si$.  
\end{defi}

The argument in Sections~\ref{sect:5} and~\ref{sect:6} works with $\C$ replaced by $\R$ and that in Section~\ref{sect:7} works with $S^1$ replaced by $S^0$.  Therefore the results in Section~\ref{sect:8} hold for real topological toric manifolds with suitable modification. One big difference between topological toric manifold and real topological toric manifolds is that a topological toric manifold is simply connected (Proposition~\ref{prop:simply}) while a real topological toric manifold is not simply connected.  

The real analogue of a quasitoric manifold is the following.   

\begin{defi}[Davis-Januszkiewicz \cite{da-ja91}] 
A closed smooth manifold $N$ of dimension $n$ with a smooth action of $(S^0)^n$ is called a \emph{small cover} over a simple polytope $Q$ if 
\begin{enumerate}
\item the action of $(S^0)^n$ on $N$ is locally standard, and 
\item the orbit space $N/(S^0)^n$ is the simple polytope $Q$.
\end{enumerate}
\end{defi} 

As is developed in \cite{da-ja91}, almost all arguments for quasitoric manifold work for small covers with $S^1$ and $\Z$ replaced by $S^0$ and $\Z/2$ respectively, and Proposition~\ref{prop:daja} holds for small covers with suitable modification.  Thus the analogous theorem to Theorem~\ref{theo:quasi} holds for real topological toric manifolds and small covers.  

A quasitoric manifold $M$ of dimension $2n$ admits an involution called \emph{conjugation} (see \cite{da-ja91}) and its fixed point set is a small cover.  However, it is not always the case that a topological toric manifold admits such a conjugation.  We say that a topological fan $\De=(\Si,\beta)$ where $\beta_i=(b_i+\sqrt{-1}c_i,v_i)$ is \emph{involutive} if $c_i=0$ for all $i\in\Si^{(1)}$.  Accordingly, we say that a topological toric manifold $X$ is \emph{involutive} if the topological fan associated with $X$ is involutive.  Note that any toric manifold is involutive.  Suppose that our topological toric manifold $X$ is involutive.  Since we may assume $X=X(\De)$ by Theorem~\ref{theo:class}, $X$ has an involution (called the conjugation on $X$) induced from the complex conjugation on $\C^n$ by Lemma~\ref{lemm:conjugate}.  The fixed point set of the conjugation on $X$, denoted $\RX$, is a real topological toric manifold.  Clearly $\De(\RX)=(\Si(X),\bar\beta(X))$ where $\bar\beta(X)=(b(X),\bar v(X))\in \R^n\times(\Z/2)^n$ and $\bar v(X)$ denotes the mod 2 reduction of $v(X)$.  The authors do not know whether any real topological toric manifold can be realized as the fixed point set of the conjugation in some involutive topological toric manifold.

\section*{Appendix}

In this appendix, we collect a few results on underlying simplicial complexes of (topological) toric manifolds.    

\medskip
\noindent
{\bf Proposition A.1.} 
\emph{If $n\le 3$, then any simplicial $(n-1)$-sphere can be the underlying simplicial complex of a topological toric manifold.  However, if $n\ge 4$, then there are infinitely many simplicial $(n-1)$-spheres which cannot be the underlying simplicial complex of a topological toric manifold.}

\begin{proof}
The proof is essentially same as in \cite{da-ja91}.  
The proposition is clear when $n\le 2$.  Let $\Si$ be any simplicial $2$-sphere.  It is isomorphic to the boundary complex of some simplicial $3$-polytope $P$ by a theorem of Steinitz.  We put $P$ in $\R^3$ in such a way that $P$ contains the origin of $\R^3$ in its interior.  Let $x_1,\dots,x_m$ be the vertices of $P$.  We take $b_i$ in the definition of a topological fan to be the position vector of $x_i$ in $\R^3$.  On the other hand, the four color theorem ensures that one can assign one of four vectors $e_1,e_2,e_3,e_1+e_2+e_3$, where $e_1,e_2,e_3$ denote the standard basis of $\R^3$, to each vertex of $P$ in such a way that different vectors are assigned to any two vertices joined by an edge in $P$. We take $v_i$ to be the integral vector assigned to the vertex $x_i$.  Then a pair $(\Si,\beta)$ with $\beta_i=(b_i,v_i)$ defines a complete non-singular topological fan of dimension $3$, so that the $\Si$ is the underlying simplicial complex of a topological toric manifold by Theorem~\ref{theo:class}. 

The latter statement in the proposition follows from \cite[Nonexample 1.22]{da-ja91}.  We shall reproduce the argument for the reader's convenience. For any integers $m> n(\ge 2)$, there is an $n$-dimensional simplicial polytope with $m$ vertices denoted by $C^n(m)$ and called a cyclic polytope.  Then the boundary complex of $C^n(m)$ cannot be the underlying simplicial complex of a topological toric manifold if $n\ge 4$ and $m\ge 2^n$.  The reason is as follows. Since $n\ge 4$, the 1-skeleton of $C^n(m)$ is known to be a complete graph (i.e. any two vertices in $C^n(m)$ are joined by an edge), and since $m\ge 2^n$, any nonzero integral vectors assigned to the $m$ vertices must have the same mod 2 reduction at some two vertices; so it is impossible to assign integral vectors $v_i$ to the $m$ vertices in such a way that they satisfy the non-singular condition over $\Z$ in the definition of a non-singular topological fan, see Section~\ref{sect:3}.  Therefore, the boundary complex of $C^n(m)$ cannot be the underlying simplicial complex of a topological toric manifold if $n\ge 4$ and $m\ge 2^{n}$, proving the latter statement in the proposition.  
\end{proof}

Let $\sigma$ be a simplex of maximal dimension in $\Si$.  We remove $\sigma$ from $\Si$ and add a new vertex, say $x$, and all cones in the join $x*\partial \sigma$ to $\Si$, where $\partial \sigma$ denotes the boundary complex of $\sigma$.  This operation produces a new simplicial complex and is called a \emph{stellar subdivision}.  

\medskip
\noindent
{\bf Lemma A.2.}
\emph{If a simplicial complex $\Si$ is the underlying simplicial complex of a topological toric manifold, then any simplicial complex obtained from $\Si$ by performing a stellar subdivision on $\Si$ can be the underlying simplicial complex of some topological toric manifold, and the same statement holds for toric manifolds.} 

\begin{proof}
This lemma is well-known for toric manifolds and the same argument works for topological toric manifolds.  The argument is as follows.  Let $\De=(\Si,\beta)$ be a topological fan associated with a topological toric manifold.  Let $\sigma$ be a simplex in $\Si$ of maximal dimension and let $\Si'$ be the simplicial complex obtained from $\Si$ by performing the stellar subdivision of $\Si$ on $\sigma$.  We define $\beta'\colon \Si'^{(1)}\to \MR^n$ by 
\[
\beta'(i):=\begin{cases} \beta(i) \quad&\text{if $i\in \Si^{(1)}\subset \Si'^{(1)}$,}\\
\sum_{j\in \sigma^{(1)}} \beta(j) \quad &\text{if $i$ is the new vertex in $\Si'^{(1)}$,}
\end{cases}
\] 
where $\sigma^{(1)}$ denotes the set of vertices of $\sigma$.  Then $(\Si',\beta')$ is again a complete non-singular topological fan so that $\Si'$ is the underlying simplicial complex of the topological toric manifold associated with  $(\Si',\beta')$, proving the lemma. 
\end{proof}

\noindent
{\bf Lemma A.3.} 
\emph{A simplicial complex $\Si$ is the underlying simplicial complex of a topological toric manifold if and only if so is its suspension $S\Si$, and the same statement holds for a toric manifold.}   

\begin{proof}
We shall prove the lemma for topological toric manifolds.  The reader will find that the same argument will work for toric manifolds.  
 
Suppose that $\Si$ is the underlying simplicial complex of a topological toric manifold $X$.  Then the suspension $S\Si$ is the underlying simplicial complex of the product $\C P^1\times X$ which is again a topological toric manifold with the product action, proving the \lq\lq only if" part.  Conversely, suppose that the suspension $S\Si$ is the underlying simplicial complex of a topological toric manifold $Y$.  Let $x$ be a vertex of $S\Si$ created by the suspension, so the link of the $x$ is $\Si$.  The vertex $x$ corresponds to some characteristic submanifold $Y_0$ of $Y$.  The $Y_0$ is again a topological toric manifold and the underlying simplicial complex $\Si(Y_0)$ agrees with the link of the vertex $x$ in $S\Si$ which is $\Si$, proving the \lq\lq if" part.  
\end{proof}

\bigskip


\bigskip

\end{document}